\newtheorem{theorem}{Theorem}
\newtheorem{corollary}[theorem]{Corollary}
\newtheorem{lemma}[theorem]{Lemma}
\newtheorem{proposition}[theorem]{Proposition}
\newtheorem{remark}[theorem]{Remark}
\newenvironment{proof}[1][Proof]{\textbf{#1.} }{\ \rule{0.5em}{0.5em}}
\renewcommand{\geq}{\geqslant}
\def\1{{\mathbf{1}}}
\def\1{{\mathbf{1}}}
\def\0.5{{\frac{1}{2}}}
\renewcommand{\thefootnote}{\fnsymbol{footnote}}
\begin{document}

\renewcommand{\thefootnote}{\arabic{footnote}}

\begin{center}
{\Large \textbf{Optimal rates for parameter estimation of stationary
Gaussian processes }} \\[0pt]
~\\[0pt]
Khalifa Es-Sebaiy\footnote{%
National School of Applied Sciences - Marrakesh, Cadi Ayyad University,
Marrakesh, Morocco. Email: \texttt{k.essebaiy@uca.ma}} and Frederi G. Viens
\footnote{%
Dept. Statistics and Dept. Mathematics, Purdue University, 150 N. University
St., West Lafayette, IN 47907-2067, USA. E-mail: \texttt{viens@purdue.edu}}\\%
[0pt]
\textit{Cadi Ayyad University and Purdue University}\\[0pt]
~\\[0pt]
\end{center}

{\ \noindent \textbf{Abstract:} We study rates of convergence in central
limit theorems for partial sum of functionals of general stationary and
non-stationary Gaussian sequences, using optimal tools from analysis on
Wiener space. We apply our result to study drift parameter estimation
problems for some stochastic differential equations driven by fractional
Brownian motion with fixed-time-step observations.\vspace*{0.1in}}

{\ \noindent \textbf{Key words}: Central limit theorem; Berry-Esséen;
stationary Gaussian processes; Nourdin-Peccati analysis; parameter
estimation; fractional Brownian motion.}


\section{Introduction}

While statistical inference for Itô-type diffusions has a long history,
statistical estimation for equations driven by fractional Brownian motion
(fBm) is much more recent, partly because the development of stochastic
calculus with respect to the fBm, which provides tools to study such models,
is itself a recent and ongoing endeavor, and partly because these tools can
themselves be unwieldy in comparison with the convenience and power of
martingale methods and the Markov property which accompany Itô models. Our
purpose in this article is to show how the analysis on Wiener space,
particularly via tools recently developed to study the convergence-in-law
properties in Wiener chaos, can be brought to bear on parameter estimation
questions for fBm-driven models, and more generally for arbitrary stationary
Gaussian models.

\subsection{Context and general ideas}

There are several approaches to estimating drift parameters in fBm-driven
models, which have been developed over the course of the past 10 or 15
years. The approaches we mention below are related to the methods in this
article.

\begin{itemize}
\item The MLE approach in \cite{KL}, \cite{TV}. In general the techniques
used to construct maximum likelihood estimators (MLE) for drift parameters
are based on Girsanov transforms for fBm and depend on the properties of the
deterministic fractional operators (determined by the Hurst parameter)
related to the fBm. In general, the MLE is not easily computable. In
particular, it relies on being able to compute stochastic integrals with
respect to fBm. This is difficult or hopeless for most models since
approximating pathwise integrals w.r.t. fBm, when they exist, is
challenging, while Skorohod-type integrals cannot be computed based on the
data except in special cases. The work in \cite{TV} is the only one in which
a strongly consistent discretization of the MLE was based on long-horizon
asymptotics without also requiring an in-fill (small time step) condition,
though it did not establish any asymptotic distribution.

\item A least-squares (LS) approach was proposed in \cite{HN}. The study of
the asymptotic properties of the estimator is based on certain criteria
formulated in terms of the Malliavin calculus (see \cite{NP-book}). It
should be noted that in \cite{HN}, the full LS estimator relies on an
unobservable Skorohod integral, and the authors proposed a modified version
of this estimator which can be computed based on in-fill asymptotics;
however, this modified estimator bears no immediate relation to an LS one
(see \cite{EEV} for examples of what constitutes a discretization of an LS
estimator for fBm models, and for a comparison with MLE methods, which
coincide with LS methods if and only if $H=1/2$). In the ergodic case, the
statistical inference for several fractional Ornstein-Uhlenbeck (fOU) models
via LS methods was recently developed in the papers \cite{HN}, \cite{AM},
\cite{AV}, \cite{EEV}, \cite{HS}, \cite{BI}, \cite{NT}. The case of
non-ergodic fOU process of the first kind and of the second kind can be
found in \cite{BEO}, \cite{EEO} and \cite{EET} respectively.
\end{itemize}

We bring new techniques to statistical inference for stochastic differential
equations (SDEs) related to stationary Gaussian processes. Some of these
ideas can be summarized as follows:

\begin{itemize}
\item Since the theory of inference for these fBm-driven SDEs is still near
its inception, and most authors are concerned with linear problems, whose
solutions are Gaussian, this Gaussian property should be exploited to its
fullest extent, given the best tools currently available.

\begin{itemize}
\item Therefore we choose to consider polynomial variations of these
processes, which then necessarily live in Wiener chaos, whose properties are
now well understood thanks to new Malliavin-calculus advances which were
initiated by Nourdin and Peccati in 2008; in particular, we rely a general
observation and their so-called optimal 4th moment theorem, in \cite{NP2013}.

\item As a consequence, we are able to compute upper bounds in the total
variation (TV) norm for the rate of normal convergence of our estimator. In
particular, for the quadratic case, we prove a Berry-Esséen theorem (speed
on the order of $1/\sqrt{n}$) for this TV norm which we show is sharp in
some cases by finding a lower bound with the same speed. No authors as far
as we know have ever provided such quantitative estimates of the speed of
asymptotic normality for any drift estimators for any fBm-driven model, let
alone shown that they are sharp.
\end{itemize}

\item Rather than starting from the continuous-time setting of SDEs, and
then attempt to discretize resulting LS estimators, as was done in many of
the aforementioned works including our own \cite{EEV}, we work from
discretely observed data from the continuous-time SDEs, and design
estimators based on such Gaussian sequences. In fact, we show that one can
develop estimators valid for any Gaussian sequence, with suitable conditions
on the sequence's auto-correlation function, and then apply them to
fBm-driven SDEs of interest. In this way, we are able to provide estimators
for many other models, while the models studied in \cite{HS}, \cite{AM},
\cite{AV}, \cite{EEV} become particular cases in our approach.

\item Since our method relies on conditions which need only be checked
intrinsically on the auto-correlation function, it can apply equally well to
in-fill situations and increasing-horizon situations.

\begin{itemize}
\item It turns out that, as an artefact of trying to discretize estimators
based on continuous paths, prior works were never able to avoid an in-fill
assumption on the data (and sometimes even required both in-fill and
increasing-horizon assumptions). In this paper, we illustrate our methods by
showing that in-fill assumptions are never needed for the examples we cover.

\item Essentially, as explained in more detail further below, if a Gaussian
stochastic process has a memory correlation length which is bounded above by
that of a fBm with Hurst parameter $H<3/4$, then our polynomial variations
estimator based on discrete data (fixed time step) is asymptotically normal
as the number of observations $n$ increases, with a TV speed as good as $1/%
\sqrt{n}$, as mentioned above.
\end{itemize}

\item Finally, we provide a systematic study of how to go from stationary
observations, to observations coming from a Gaussian process which may not
be stationary, by implementing a fully quantitative strategy to control the
contribution of the non-stationarity to the TV convergence speeds. In the
examples we cover, which are those of recent interest in the literature, the
non-stationarity term vanishes exponentially fast, which is more than enough
for our generic condition to hold, but slower power convergences would yield
the same results, for summable powers.
\end{itemize}

\subsection{Summary of results}

We summarize our paper's contents briefly in this section, including some
heuristics for easier reading. Consider a centered stationary Gaussian
process $Z=\left( Z_{k}\right) _{k\in \mathbb{Z}}$ with covariance
(auto-correlation function)
\begin{equation*}
r_{Z}(k):=\mathbf{E}\left[ Z_{0}Z_{k}\right] \mbox{ for every }k\in \mathbb{Z%
}\mbox{ such that }r_{Z}(0)>0.
\end{equation*}%
Fix a polynomial function $f_{q}$ of degree $q$ where $q$ is an even
integer. To estimate the parameter $\lambda _{f_{q}}(Z):=\mathbf{E}\left[
f_{q}(Z_{0})\right] $, we use the \textquotedblleft polynomial
variation\textquotedblright\ estimator
\begin{equation*}
Q_{f_{q},n}(Z):=\frac{1}{n}\sum_{i=0}^{n-1}f_{q}(Z_{i}),
\end{equation*}%
which can be considered as a scalar version of a generalized method of
moments. Some of the general results we prove are the following.

\begin{itemize}
\item $Q_{f_{q},n}(Z)$ is strongly consistent under a very weak decay
condition on $r_{Z}$ (Theorem \ref{consistency for Z}), without requiring
ergodicity.

\item To avoid situations where the memory of $Z$ is so long that $%
Q_{f_{q},n}(Z)$'s asymptotics are non-normal, we introduce the following
assumption (condition (\ref{BM})), which is a special case of the condition
used by Breuer and Major\ in 1983 to establish normality of Hermite
variations (see \cite[Chapter 7]{NP-book}):%
\begin{equation*}
u_{f_{2}}\left( Z\right) :=2\sum_{j\in \mathbb{Z}}r_{Z}(j)^{2}<\infty ,
\end{equation*}%
with a similar definition for $\lambda _{f_{q}}(Z)$, the limit of $Var\left(
Q_{f_{q},n}(Z)\right) $, which is then also finite.

\begin{itemize}
\item Under this condition, as an extension of our main TV convergence rate
Theorem \ref{CLT}, we prove the following for the normalized (Corollary \ref%
{CLTCor}):%
\begin{eqnarray}
&&d_{TV}\left( \sqrt{n}\left[ Q_{f_{q},n}(Z)-\lambda _{f_{q}}(Z)\right] ,%
\mathcal{N}\left( 0,u_{f_{q}}(Z)\right) \right)  \notag \\
&\leqslant &C_{q}(Z)\left( \sqrt[4]{\kappa _{4}(U_{f_{2},n}(Z))}+\sqrt{%
\kappa _{4}(U_{f_{2},n}(Z))}\right) +2\left\vert 1-\frac{Var\left(
Q_{f_{q},n}(Z)\right) }{u_{f_{q}}(Z)}\right\vert  \label{previewCLTcor}
\end{eqnarray}%
where $U_{f_{2},n}(Z):=Q_{f_{2},n}(Z)-u_{f_{2}}\left( Z\right) $, which
comes from the case $q=2$, controls all cases of $q$ nonetheless. Thus the
total variation distance between the renormalized estimator and the normal
law with asymptotic variance $u_{f_{q}}(Z)$ is bounded by the $4$th root of
the $4$th cumulant in the quadratic case, and the relative distance of the
estimator's variance to its limit.

\item If further normalizing by $Var\left( Q_{f_{q},n}(Z)\right) $, the last
term above vanishes, though the unnormalized expression is the only one
which can be computed in practice, since $Var\left( Q_{f_{q},n}(Z)\right) $
depends on the parameter $\lambda $. Thus the speed of convergence of $%
Var\left( Q_{f_{q},n}(Z)\right) $ is of major practical importance.

\item There are explicit expressions for $\kappa _{4}(U_{f_{2},n}(Z))$ and $%
Var\left( Q_{f_{q},n}(Z)\right) $ which can be expressed using $r_{Z}$, as
explained in Section \ref{EXGEN}. Consequently, the above upper bound can be
computed explicitly for many cases of $r_{Z}$. For instance (Corollary \ref%
{CLTCor2}) , if $Z$ has a memory which is bounded above by that of
fractional Gaussian noise with parameter $H<5/8$, the above result yields%
\begin{equation*}
d_{TV}\left( \sqrt{n}\left[ Q_{f_{q},n}(Z)-\lambda _{f_{q}}(Z)\right] ,%
\mathcal{N}\left( 0,u_{f_{q}}(Z)\right) \right) \leqslant 1/\sqrt[4]{n}.
\end{equation*}
\end{itemize}

\item The case where $Q_{f_{q},n}(Z)$'s asymptotics are non-normal can be
treated using other, less optimal, tools. For the sake of conciseness, we do
not provide detailed arguments in this paper, instead stating results
without proof in Remarks \ref{NCLT} and \ref{NNCLT}.

\item In practice, it is common to encounter situations where observations
are not stationary, for instance because their initial value is a point mass
rather than the stationary law of a stochastic system. Thus, assuming that
the observations come from $X_{k}=Z_{k}+Y_{k}$ where $Z$ is as above and $Y$
is the deviation from a stationary process, we prove a convergence theorem
under a generic assumption on $Y$ which is easily verified in practice. If
there exist $p_{0}\in \mathbb{N}$ and a constant $\gamma >0$ such that for
every $p\geq p_{0}$ and for all $n\in \mathbb{N}$,
\begin{equation*}
\left\Vert Q_{f_{q},n}(Z+Y)-Q_{f_{q},n}(Z)\right\Vert _{L^{p}(\Omega )}=%
\mathcal{O}\left( n^{-\gamma }\right)
\end{equation*}%
then for the Wasserstein distance (see Theorem \ref{CLT for Z+Y} for
details)
\begin{eqnarray*}
&&d_{W}\left( Q_{f_{q},n}(X)-\lambda _{f_{q}}(Z),\mathcal{N}\left(
0,u_{f_{q}}(Z)\right) \right) \\
&\leqslant &Cn^{\frac{1}{2}-\gamma }+C\sqrt[4]{\kappa _{4}(U_{f_{2},n}(Z))}%
+C\left\vert 1-\frac{Var\left( Q_{f_{q},n}(Z)\right) }{u_{f_{q}}(Z)}%
\right\vert .
\end{eqnarray*}

\item When $q=2$, up to a constant, the estimator $Q_{f_{2},n}(Z)$ is in the
second chaos. In this case, sharper results are established.

\begin{itemize}
\item For instance, assuming $u_{f_{2}}\left( Z\right) <\infty $ and the
following two conditions (see Theorem \ref{CLT for quadratic var} for
details):%
\begin{eqnarray*}
\left\Vert Q_{f_{2},n}(Z+Y)-Q_{f_{2},n}(Z)\right\Vert _{L^{1}(\Omega )}
&\leqslant &\mathcal{O}\left( \frac{1}{n}\right) , \\
\left\vert u_{f_{2}}\left( Z\right) -E\left[ U_{f_{2},n}^{2}(Z)\right]
\right\vert &\leqslant &\mathcal{O}\left( \frac{1}{\sqrt{n}}\right) ,
\end{eqnarray*}%
then for the Wasserstein distance, assuming constants in the above
assumptions are not too large,%
\begin{equation*}
\frac{c_{1}}{\sqrt{n}}\leqslant d_{W}\left( \sqrt{n}\left[
Q_{f_{q},n}(X)-\lambda _{f_{q}}(Z)\right] ,\mathcal{N}\left(
0,u_{f_{q}}(Z)\right) \right) \leqslant \frac{C_{1}}{\sqrt{n}}.
\end{equation*}%
In this sense, we have established conditions under which our variation for
a non-stationary and highly correlated sequence satisfies a quantitative
Berry-Esséen-type theorem with optimal rate (recall that the classical
Berry-Esséen theorem is for an i.i.d. sequence and is stated for the
Kolmogorov distance, which is bounded above strictly by our Wasserstein
distance).

\item It is remarkable that this results holds for all Gaussian sequences
with autocorrelation bounded above by that of fBm with $H<2/3$ (whereas the
best results for $q>2$ show that one needs the stronger condition $H<5/8$).
\end{itemize}

\item Before moving to specific examples, we establish two improvements: a
strategy for converting the above results into estimators for parameters
which are buried in a functional form for $\lambda _{f_{q}}(Z)$, and a
method for improving rates of convergence by taking finite differences.

\begin{itemize}
\item If we are interested in a parameter $\theta $ which is related to $%
\lambda $ via $\lambda _{f_{q}}(Z)=g^{-1}\left( \theta \right) $ where $g$
is a diffeomorphism, so that $\check{\theta}_{n}:=g\left(
Q_{f_{q},n}(Z)\right) $ is a strongly consistent estimator of $\theta $,
under the condition that $g^{\prime \prime }\left( Q_{f_{q},n}(Z)\right) $
has moments of sufficiently large order, then%
\begin{equation*}
d_{W}\left( \sqrt{n}\left( \check{\theta}_{n}-\theta \right) ,\mathcal{N}%
(0,g^{\prime }(\lambda _{f_{q}}(Z))^{2}Var\left( Q_{f_{q},n}(Z)\right)
)\right)
\end{equation*}%
converges to $0$ at the same speed as in (\ref{previewCLTcor}). This is in
Theorem \ref{Berryesseentheta}.

\item Finally, by considering $X_{k}^{\left( 1\right) }=X_{k}-X_{k-1}$, it
is well known that memory length is decreased by 2 power units in
autocorrelation for all long-memory sequences with power decay; thus $%
Z_{k}^{\left( 1\right) }$ becomes sufficiently short memory to allow us to
apply the best convergence results above; in particular conditions such as
\textquotedblleft $H<5/8$\textquotedblright\ are automatically satisfied as
soon as one takes a first-order finite difference. This is explained in
Section \ref{IRC}. For instance in Theorem \ref{OptimalImprove}, we find
\begin{equation*}
\frac{c}{\sqrt{n}}\leqslant d_{W}\left( Q_{f_{2},n}(Z^{\left( 1\right)
})-\lambda _{f_{2},n}(Z^{\left( 1\right) }),\mathcal{N}\left(
0,u_{f_{2}}(Z^{\left( 1\right) })\right) \right) \leqslant \frac{C}{\sqrt{n}}%
.
\end{equation*}%
Arguably, as long as one can compute $\lambda _{f_{2},n}(Z^{\left( 1\right)
})$ and relate it to a parameter of interest, this improvement allows one to
take advantage of the best rate of convergence, that of Berry-Esséen order.
A study of one example of what it means to extract a parameter from $\lambda
_{f_{2},n}(Z^{\left( 1\right) })$ is given in Section \ref{HigherFOU}, for
the fractional Ornstein-Uhlenbeck process.
\end{itemize}
\end{itemize}

This bring us to the last sections in which we apply the above results to
specific cases.

\begin{itemize}
\item \emph{Application to fractional Ornstein Uhlenbeck models}. An
Ornstein-Uhlenbeck process $X=\left\{ X_{t},t\geq 0\right\} $ is the
solution of the linear stochastic differential equation
\begin{equation}
X_{0}=0;\quad dX_{t}=-\theta X_{t}dt+dG_{t},\quad t\geq 0,  \label{OUintro}
\end{equation}%
where $G$ is a Gaussian process and $\theta >0$ is an unknown parameter. The
problem here is to estimate the parameter $\theta $ based on discrete
equidistant observations (fixed time step, horizon tending to $+\infty $),
and provide precise CLTs, which can be useful for hypothesis testing in
practice via parametric inference.

\begin{itemize}
\item \emph{Fractional Ornstein-Uhlenbeck model }(Section \ref{AOUP}): the
process $G$ in (\ref{OUintro}) is a fractional Brownian motion with Hurst
parameter $H\in (0,1)$. By assuming that $H>\frac{1}{2}$, \cite{HN} studied
the least-squares estimator (LSE) $\widehat{\theta }_{t}=\left(
\int_{0}^{t}X_{s}\delta X_{s}\right) /\left( \int_{0}^{t}X_{s}^{2}ds\right) $
of $\theta $ when the process $X$ is continuously observed. In this paper,
using our approach we construct a class of explicit estimators of $\theta $
when the process $X$ is discretely observed. We study the asymptotic
behavior of these estimators for any $H\in (0,1)$ with Berry-Esséen-type
theorems. We prove the consistency and prove that our estimators are
asymptotically normal when $H\in (0,\frac{3}{4}]$. In the particular case
when $q=2$ and $f_{2}(x)=x^{2}$, \cite{HS} proved strong consistency of this
discrete estimator and gave a Berry-Esséen-type result when $H\in (\frac{1}{2%
},\frac{3}{4})$ but the proofs in \cite{HS} rely on a possibly flawed
technique, since the passage from line -7 to -6 on page 434 is true if $H>%
\frac{3}{4}$, while one expects normal asymptotics only for the case $%
H\leqslant \frac{3}{4}$. Our work resolves the issue of $\theta $ estimation
via least squares and their higher-order generalizations, by appealing to
our new tools, avoiding the arguments in \cite{HS}. We present a number of
results in this section, including Berry-Esséen estimates, optimal lower
bounds thereof, an implementation the inversion of the quadratic variation
estimator to access $\theta $ directly, and details of how to increase the
rate of convergence via finite-differences.

\item \emph{Ornstein-Uhlenbeck process driven by fractional
Ornstein-Uhlenbeck process} (Section \ref{OUFOUsection}):{\small \ }here $X\
$is again given by (\ref{OUintro}) with drift parameter $\theta $, and $G$
is itself a fractional Ornstein-Uhlenbeck process with Hurst index $H\in
(0,1)$ and drift parameter $\rho >0$. Here $\theta $ and $\rho $ are
considered as unknown parameters (with $\theta \neq \rho $), and we assume
that only $X$ is observed. This is the long-memory analogue of a
continuous-time latent Markovian framework, in other words a partial
observation, or partial information, question. This question was considered
in \cite{EEV} when $H\in (\frac{1}{2},\frac{3}{4})$; therein, an estimator
of $(\theta ,\rho )$ was provided in the continuous and discrete cases, but
relatively strong in-fill assumptions were needed, though the estimators
still needed an increasing-horizon setting. In the present paper, we extend
the result to $H\in (0,1)$ and we propose a class of estimators with
Berry-Esséen behavior, which dispenses with any in-fill assumption. A full
set of results such as in Section \ref{OUFOUsection} could also be derived,
including optimal Berry-Esséen rates for the quadratic case in this
two-dimensional setting; for the sake of conciseness, we omit stating all
these improvements.

\item \emph{Fractional Ornstein-Uhlenbeck process of the second kind}
(Section \ref{FOUSKsection}):\emph{\ }again with $X\ $as in (\ref{OUintro}),
this process arises when $G$ has the form $G_{t}=%
\int_{0}^{t}e^{-s}dB_{a_{s}} $ with $a_{s}=He^{\frac{s}{H}}$ and $B=\left\{
B_{t},t\geq 0\right\} $ is a fractional Brownian motion with Hurst parameter
$H\in (\frac{1}{2},1)$, and where $\theta >0$ is a unknown real parameter;
notationally, to be consistent with previous work on this topic, we use the
letter $\alpha $ instead of $\theta $. The continuous and discrete cases,
when $q=2$ and $f_{2}(x)=x^{2}$, are studied in \cite{AM} and \cite{AV},
though no speeds of convergence are provided, and in-fill assumptions are
needed. In Section \ref{FOUSKsection}, we propose a class of estimators and
we provide Berry-Esséen-type theorems of them with no in-fill assumption.
The covariance structure of the process is such that our methods easily
provide an optimal convergence rate for the estimator after inversion of the
quadratic variation.\bigskip
\end{itemize}
\end{itemize}

In conclusion, our methodology is developed for essentially any stationary
Gaussian sequence, we can handle some non-stationarity under a weak
assumption on the speed of relaxation to a stationary law, we provide
Berry-Esséen rates for the normal asymptotics of our polynomial variation
estimators, particularly in the quadratic case where the rates are often
optimal, and we analyze some of the issues that can arise when inverting a
polynomial variation to access a specific parameter. This is all achieved by
relying on the sharpest estimates known to date, in the framework of Nourdin
and Peccati, in Wiener chaos for total variation and Wasserstein convergence
in law. Applications to drift estimation for long-memory models of current
interest are provided.\bigskip

Our article is structured as follows. Section \ref{Wiener} provides some
basic elements of analysis on Wiener space which are helpful for some of the
arguments we use. Section \ref{PARAMESTIM} provides the general theory of
polynomial variation for general Gaussian sequences, covering the stationary
case (Section \ref{AD}, with examples in Section \ref{EX}), non-stationary
cases (Section \ref{NONSTAT}), which include optimality in the quadratic
case even under non-stationarity (Section \ref{QC}) and a strategy of how to
access a specific parameter other than the polynomial's variance (Section %
\ref{Towards}). Section \ref{IRC} explains under what circumstances one can
increase the rate of convergence to an optimal level by finite-differencing.
Finally, three sets of examples based on fractional Ornstein-Uhlenbeck
constructions are given in Sections \ref{AOUP} and \ref{Multi}. Some of the
technical results used in various proofs, including the proof of the basic
Berry-Esséen theorem in the stationary case, are in the Appendix (Section %
\ref{Appendix}).

\section{Elements of analysis on Wiener space\label{Wiener}}

Here we summarize a few essential facts from the analysis on Wiener space
and the Malliavin calculus. Though these facts and notation are essential
underpinnings of the tools and results of this paper, most of our results
and arguments can be understood without knowledge of the elements in this
section. The interested reader can find more details in \cite[Chapter 1]%
{nualart-book} and \cite[Chapter 2]{NP-book}.

Let $\left( \Omega ,\mathcal{F},\mathbf{P}\right) $ be a standard Wiener
space, its standard Wiener process $W$, where for a deterministic function $%
h\in L^{2}\left( \mathbf{R}_{+}\right) =:{{\mathcal{H}}}$, the Wiener
integral $\int_{\mathbf{R}_{+}}h\left( s\right) dW\left( s\right) $ is also
denoted by $W\left( h\right) $. The inner product $\int_{\mathbf{R}%
_{+}}f\left( s\right) g\left( s\right) ds$ will be denoted by $\left\langle
f,g\right\rangle _{{\mathcal{H}}}$. For every $q\geq 1$, let ${\mathcal{H}}%
_{q}$ be the $q$th Wiener chaos of $W$, that is, the closed linear subspace
of $L^{2}(\Omega )$ generated by the random variables $\{H_{q}(W(h)),h\in {{%
\mathcal{H}}},\Vert h\Vert _{{\mathcal{H}}}=1\}$ where $H_{q}$ is the $q$th
Hermite polynomial. The mapping ${I_{q}(h^{\otimes q}):}=q!H_{q}(W(h))$
provides a linear isometry between the symmetric tensor product ${\mathcal{H}%
}^{\odot q}$ (equipped with the modified norm $\Vert .\Vert _{{\mathcal{H}}%
^{\odot q}}=\frac{1}{\sqrt{q!}}\Vert .\Vert _{{\mathcal{H}}^{\otimes q}}$)
and ${\mathcal{H}}_{q}$. It also turns out that ${I_{q}(h^{\otimes q})}$ is
the multiple Wiener integral of ${h^{\otimes q}}$ w.r.t. $W$. For every $%
f,g\in {{\mathcal{H}}}^{\odot q}$ the following product formula holds
\begin{equation*}
E\left( I_{q}(f)I_{q}(g)\right) =q!\langle f,g\rangle _{{\mathcal{H}}%
^{\otimes q}}.
\end{equation*}%
For $h\in {\mathcal{H}}^{\otimes q}$, the multiple Wiener integrals $%
I_{q}(h) $, which exhaust the set ${\mathcal{H}}_{q}$, satisfy a
hypercontractivity property (equivalence in ${\mathcal{H}}_{q}$ of all $L^{p}
$ norms for all $p\geq 2$), which implies that for any $F\in \oplus
_{l=1}^{q}{\mathcal{H}}_{l}$, we have
\begin{equation}
\left( E\big[|F|^{p}\big]\right) ^{1/p}\leqslant c_{p,q}\left( E\big[|F|^{2}%
\big]\right) ^{1/2}\ \mbox{ for any }p\geq 2.  \label{hypercontractivity}
\end{equation}

Though we will not insist on their use in the main body of the paper,
leaving associated technicalities to the proof of one of our main theorems
in the appendix, the Malliavin derivative operator $D$ on $L^{2}\left(
\Omega \right) $ plays a fundamental role in evaluating distances between
random variables therein. For any function $\Phi \in C^{1}\left( \mathbf{R}%
\right) $ with bounded derivative, and any $h\in {\mathcal{H}}$, we define
the Malliavin derivative of the random variable $X:=\Phi \left( W\left(
h\right) \right) $ to be consistent with the following chain rule:%
\begin{equation*}
DX:X\mapsto D_{r}X:=\Phi ^{\prime }\left( W\left( h\right) \right) h\left(
r\right) \in L^{2}\left( \Omega \times \mathbf{R}_{+}\right) .
\end{equation*}%
A similar chain rule holds for multivariate $\Phi $. One then extends $D$ to
the so-called Gross-Sobolev subset $\mathbf{D}^{1,2}\varsubsetneqq
L^{2}\left( \Omega \right) $ by closing $D$ inside $L^{2}\left( \Omega
\right) $ under the norm defined by
\begin{equation*}
\left\Vert X\right\Vert _{1,2}^{2}=\mathbf{E}\left[ X^{2}\right] +\mathbf{E}%
\left[ \int_{\mathbf{R}_{+}}\left\vert D_{r}X\right\vert ^{2}dr\right] .
\end{equation*}

Now recall that, if $X,Y$ are two real-valued random variables, then the
total variation distance between the law of $X$ and the law of $Y$ is given
by
\begin{equation*}
d_{TV}\left( X,Y\right) =\sup_{A\in \mathcal{B}({\mathbb{R}})}\left\vert P%
\left[ X\in A\right] -P\left[ Y\in A\right] \right\vert .
\end{equation*}%
If $X,Y$ are two real-valued integrable random variables, then the
Wasserstein distance between the law of $X$ and the law of $Y$ is given by
\begin{equation*}
d_{W}\left( X,Y\right) =\sup_{f\in Lip(1)}\left\vert Ef(X)-Ef(Y)\right\vert
\end{equation*}%
where $Lip(1)$ indicates the collection of all Lipschitz functions with
Lipschitz constant $\leqslant 1$. Let $N$ denote the standard normal law.
All Wiener chaos random variable are in the domain $\mathbf{D}^{1,2}$ of $D$%
, and are orthogonal in $L^{2}\left( \Omega \right) $. The so-called Wiener
chaos expansion is the fact that any $X\in \mathbf{D}^{1,2}$ can be written
as $X=\mathbf{E}X+\sum_{q=0}^{\infty }X_{q}$ where $X_{q}\in {\mathcal{H}}%
_{q}$. We define a linear operator $L$ which is diagonalizable under the ${%
\mathcal{H}}_{q}$'s by saying that ${\mathcal{H}}_{q}$ is the eigenspace of $%
L$ with eigenvalue $-q$, i.e. for any $X\in {\mathcal{H}}_{q}$, $LX=-qX$.
The kernel of $L$ is the constants. The operator $-L^{-1}$ is the negative
pseudo-inverse of $L$, so that for any $X\in {\mathcal{H}}_{q}$, $%
-L^{-1}X=q^{-1}X$. Since the variables we will be dealing with in this
article are finite sums of elements of ${\mathcal{H}}_{q}$, the operator $%
-L^{-1}$ is easy to manipulate thereon.

Two key estimates linking total variation distance and the Malliavin
calculus are the following.

\begin{itemize}
\item Let $X\in \mathbf{D}^{1,2}$ with $\mathbf{E}\left[ X\right] =0$. Then
(see \cite[Proposition 2.4]{NP2013}),%
\begin{equation*}
d_{TV}\left( X,N\right) \leqslant 2E\left\vert 1-\left\langle
DX,-DL^{-1}X\right\rangle _{\mathcal{H}}\right\vert .
\end{equation*}

\item Let a sequence $X:X_{n}\in {\mathcal{H}}_{q}$, such that $\mathbf{E}%
X_{n}=0$ and $Var\left[ X_{n}\right] =1$ , and assume $X_{n}$ converges to a
normal law in distribution, which is equivalent to $\lim_{n}\mathbf{E}\left[
X_{n}^{4}\right] =3$ (this equivalence, proved originally in \cite{NP2005},
is known as the \emph{fourth moment theorem}). Then we have the following
optimal estimate for $d_{TV}\left( X,N\right) $, known as the optimal 4th
moment theorem, proved in \cite{NP2013}: there exist two constant $c,C>0$
depending only on the sequence $X$ but not on $n$, such that
\begin{equation*}
c\max \left\{ \mathbf{E}\left[ X_{n}^{4}\right] -3,\left\vert \mathbf{E}%
\left[ X_{n}^{3}\right] \right\vert \right\} \leqslant d_{TV}\left(
X,N\right) \leqslant C\max \left\{ \mathbf{E}\left[ X_{n}^{4}\right]
-3,\left\vert \mathbf{E}\left[ X_{n}^{3}\right] \right\vert \right\} .
\end{equation*}
\end{itemize}

Given the importance of the centered 4th moment, also known as a 4th
cumulant, of a standardized random variable, we will use the following
special notation:%
\begin{equation*}
\kappa _{4}\left( X\right) :=\mathbf{E}\left[ X^{4}\right] -3.
\end{equation*}

\section{Parameter estimation for stationary Gaussian processes\label%
{PARAMESTIM}}

\subsection{Notation and basic question\label{NBQ}}

Consider a centered stationary Gaussian process $Z=\left( Z_{k}\right)
_{k\in \mathbb{Z}}$ with covariance
\begin{equation*}
r_{Z}(k):=E(Z_{0}Z_{k})\mbox{ for every }k\in \mathbb{Z}\mbox{ such that }%
r_{Z}(0)>0.
\end{equation*}%
For any centered Gaussian sequence $Z$ indexed by $\mathbb{Z}$ (stationary
or not), it is always possible to represent the entirely family of $Z_{n}$'s
jointly as Wiener integrals using a corresponding family of functions $%
f_{n}\in \mathcal{H}$ as
\begin{equation*}
Z_{n}=I_{1}\left( f_{n}\right)
\end{equation*}%
in the notation of Section \ref{Wiener}. In all that follows, we will use
this representation.

Fix a polynomial function $f_{q}$ where $q$ is an even integer such that $%
f_{q}$ possesses the following decomposition
\begin{equation}
f_{q}(x):=\sum_{k=0}^{q/2}d_{f_{q},2k}H_{2k}\left( \frac{x}{\sqrt{r_{Z}(0)}}%
\right)  \label{polynomial function}
\end{equation}%
where for every $k=1,\ldots ,\frac{q}{2}$, $d_{f_{q},2k}\in {\mathbb{R}}$
with $d_{f_{q},q}=r_{Z}^{\frac{q}{2}}(0)$. Thus from Section \ref{Wiener},
we can write for every $i\geq 0$
\begin{equation}
f_{q}(Z_{i})=\sum_{k=0}^{q/2}d_{f_{q},2k}H_{2k}\left( \frac{Z_{i}}{\sqrt{%
r_{Z}(0)}}\right) =\sum_{k=0}^{q/2}d_{f_{q},2k}I_{2k}\left( \varepsilon
_{i}^{\otimes 2k}\right)  \label{representation f_qZ}
\end{equation}%
with $Z_{i}~/\sqrt{r_{Z}(0)}=I_{1}(\varepsilon _{i})$. Define the following
partial sum
\begin{equation*}
Q_{f_{q},n}(Z):=\frac{1}{n}\sum_{i=0}^{n-1}f_{q}(Z_{i})
\end{equation*}%
and
\begin{equation*}
\lambda _{f_{q}}(Z):=E\left[ f_{q}(Z_{0})\right] .
\end{equation*}%
We expect that the polynomial variation $Q_{f_{q},n}(Z)$, as an empirical
mean, should converge to $\lambda _{f_{q}}(Z)$. Our aim in this section is
to estimate the parameter $\lambda _{f_{q}}(Z)$ and the speed of convergence
of $Q_{f_{q},n}(Z)$ to it.

The \textquotedblleft quadratic\textquotedblright\ case $q=2$ is of special
importance. In this case, the quadratic function $f_{2}$ will typically be
taken as%
\begin{equation}
f_{2}\left( x\right) =x^{2}=r_{Z}\left( 0\right) +r_{Z}\left( 0\right)
H_{2}\left( \frac{x}{\sqrt{r_{Z}\left( 0\right) }}\right) ,  \label{f2}
\end{equation}%
and may also be taken as $H_{2}\left( x\right) =x^{2}-1$ when convenient. We
will see in Theorem \ref{CLT} that certain functionals related to the
quadratic case control the estimator's asymptotics no matter what $q$ is. We
will also provide an optimal treatment in the case $q=2$ itself in Section %
\ref{QC}.

\subsection{Consistency\label{Consist}}

\begin{theorem}
\label{consistency for Z}Suppose that there exists $\varepsilon >0$ such
that for every $n\geq 0$
\begin{equation}
\sum_{k=0}^{n-1}r_{Z}(j)^{2}\leqslant n^{1-\varepsilon }.
\label{condition of consistency}
\end{equation}%
Then $Q_{f_{q},n}(Z)$ is a consistent estimator of $\lambda _{f_{q}}(Z)$,
i.e. almost surely as $n\rightarrow \infty $,%
\begin{equation}
Q_{f_{q},n}(Z)\longrightarrow \lambda _{f_{q}}(Z).  \label{cv of consistency}
\end{equation}
\end{theorem}

\begin{proof}
It follows from (\ref{representation f_qZ}) that
\begin{eqnarray}
&&E\left[ \left( Q_{f_{q},n}(Z)-\lambda _{f_{q}}(Z)\right) ^{2}\right] =E%
\left[ \left( \frac{1}{n}\sum_{j=0}^{n-1}f_{q}(Z_{j})-Ef_{q}(Z_{j})\right)
^{2}\right] =\sum_{k=1}^{q/2}d_{f_{q},2k}^{2}\frac{(2k)!}{n^{2}}%
\sum_{i,j=0}^{n-1}\left( \frac{E(Z_{i}Z_{j})}{r_{Z}(0)}\right) ^{2k}  \notag
\\
&=&\sum_{k=1}^{q/2}d_{f_{q},2k}^{2}\frac{(2k)!}{n^{2}}\sum_{i,j=0}^{n-1}%
\left( \frac{r_{Z}(i-j)}{r_{Z}(0)}\right)
^{2k}=\sum_{k=1}^{q/2}d_{f_{q},2k}^{2}\frac{(2k)!}{n}\left( 1+\frac{2}{n}%
\sum_{j=1}^{n-1}(n-j)\left( \frac{r_{Z}(j)}{r_{Z}(0)}\right) ^{2k}\right)
\notag \\
&=&\sum_{k=1}^{q/2}d_{f_{q},2k}^{2}\frac{(2k)!}{n}\left(
1+2\sum_{j=1}^{n-1}\left( \frac{r_{Z}(j)}{r_{Z}(0)}\right) ^{2k}-\frac{2}{n}%
\sum_{j=1}^{n-1}j\left( \frac{r_{Z}(j)}{r_{Z}(0)}\right) ^{2k}\right) .
\label{variance of Q_{f_q,n}(Z)}
\end{eqnarray}%
Now, using (\ref{variance of Q_{f_q,n}(Z)}), (\ref{condition of consistency}%
), (\ref{hypercontractivity}) and Lemma \ref{Borel-Cantelli} in the
appendix, the convergence (\ref{cv of consistency}) is obtained.
\end{proof}

\begin{remark}
If $Z$ is ergodic, the convergence (\ref{cv of consistency}) is immediate.
\end{remark}

\subsection{Asymptotic distribution\label{AD}}

Consider the following renormalized partial sum
\begin{equation}
U_{f_{q},n}(Z)=\sqrt{n}\left( Q_{f_{q},n}(Z)-\lambda _{f_{q}}(Z)\right)
\label{defUZ}
\end{equation}%
Then, by (\ref{representation f_qZ}) we can write
\begin{equation}
U_{f_{q},n}(Z)=\sum_{k=1}^{q/2}I_{2k}(g_{2k,n})
\label{expression of U_{f_q,n}}
\end{equation}%
where
\begin{equation*}
g_{2k,n}:=d_{f_{q},2k}\frac{1}{\sqrt{n}}\sum_{i=0}^{n-1}\varepsilon
_{i}^{\otimes 2k}.
\end{equation*}%
The following condition will play an important role in our analysis:
\begin{equation}
u_{f_{2}}\left( Z\right) :=2\sum_{j\in \mathbb{Z}}r_{Z}(j)^{2}<\infty .
\label{BM}
\end{equation}%
Under this condition, we can pursue the analysis of expression further.
Under (\ref{BM}), $r_{Z}(j)^{2}$ must converge to $0$ as $\left\vert
j\right\vert \rightarrow \infty $. Therefore, under (\ref{BM}), for any $k$,
$r_{Z}\left( j\right) ^{2k}$ is dominated by $r_{Z}\left( j\right) ^{2}$ for
large $\left\vert j\right\vert $, and the last term in (\ref{variance of
Q_{f_q,n}(Z)}) can be estimated as follows. We first fix an $\varepsilon \in
(0,1)$ and write for any $n\geq 2$
\begin{eqnarray*}
\frac{1}{n}\sum_{j=1}^{n-1}j\cdot r_{Z}(j)^{2k} &=&\frac{1}{n}%
\sum_{j=1}^{[\varepsilon n]-1}j\cdot r_{Z}(j)^{2k}+\frac{1}{n}%
\sum_{j=[\varepsilon n]}^{n-1}j\cdot r_{Z}(j)^{2k} \\
&\leqslant &\frac{\varepsilon n}{n}\sum_{j=1}^{[\varepsilon
n]-1}r_{Z}(j)^{2k}+\frac{n}{n}\sum_{j=[\varepsilon
n]}^{n}r_{Z}(j)^{2k}\leqslant \varepsilon ~u_{f_{2}}\left( Z\right)
+\sum_{j=[\varepsilon n]}^{\infty }r_{Z}(j)^{2}.
\end{eqnarray*}%
By Condition (\ref{BM}), with $\varepsilon $ fixed, one can choose $n$ so
large that $\sum_{j=[\varepsilon n]}^{\infty }r_{Z}(j)^{2}<\varepsilon $.
Thus the last term in (\ref{variance of Q_{f_q,n}(Z)}) can be made
arbitrarily small. This immediately implies the following useful result.

\begin{lemma}
\label{uZlemma}Under Condition (\ref{BM}), for every even $q\geq 2$,%
\begin{equation}
u_{f_{q}}(Z):=\lim_{n\rightarrow \infty }E\left[ U_{f_{q},n}^{2}(Z)\right]
=\sum_{k=1}^{q/2}d_{f_{q},2k}^{2}(2k)!\sum_{j\in \mathbb{Z}}\left( \frac{%
r_{Z}(j)}{r_{Z}(0)}\right) ^{2k}<\infty .  \label{limit
u_{f_q}}
\end{equation}
\end{lemma}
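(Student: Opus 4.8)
The plan is to read the limit directly off the exact second-moment identity already established in the proof of Theorem~\ref{consistency for Z}. Since $U_{f_q,n}(Z)=\sqrt n\,(Q_{f_q,n}(Z)-\lambda_{f_q}(Z))$, multiplying the last line of (\ref{variance of Q_{f_q,n}(Z)}) by $n$ gives the exact identity
\begin{equation*}
E\left[U_{f_q,n}^2(Z)\right]=\sum_{k=1}^{q/2}d_{f_q,2k}^2(2k)!\left(1+2\sum_{j=1}^{n-1}\left(\frac{r_Z(j)}{r_Z(0)}\right)^{2k}-\frac{2}{n}\sum_{j=1}^{n-1}j\left(\frac{r_Z(j)}{r_Z(0)}\right)^{2k}\right).
\end{equation*}
Because the outer sum over $k$ is finite, it suffices to pass to the limit inside it and to treat the three terms in the parenthesis separately.

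For the first two terms I would first record the elementary bound $|r_Z(j)|\leqslant r_Z(0)$, which follows from Cauchy--Schwarz applied to $r_Z(j)=E[Z_0Z_j]$ together with stationarity. Consequently $0\leqslant (r_Z(j)/r_Z(0))^{2}\leqslant 1$, so that for every $k\geqslant 1$ the summand $(r_Z(j)/r_Z(0))^{2k}$ is dominated by $(r_Z(j)/r_Z(0))^{2}$. Under Condition (\ref{BM}) the latter is summable over $j\in\mathbb{Z}$, whence each series $\sum_{j}(r_Z(j)/r_Z(0))^{2k}$ converges absolutely, and the symmetry $r_Z(-j)=r_Z(j)$ together with $(r_Z(0)/r_Z(0))^{2k}=1$ gives
\begin{equation*}
1+2\sum_{j=1}^{n-1}\left(\frac{r_Z(j)}{r_Z(0)}\right)^{2k}\longrightarrow \sum_{j\in\mathbb{Z}}\left(\frac{r_Z(j)}{r_Z(0)}\right)^{2k}\qquad\text{as }n\to\infty.
\end{equation*}

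For the third (Ces\`aro-type) term no new work is needed: the estimate carried out in the paragraph immediately preceding the statement shows precisely that $\frac{1}{n}\sum_{j=1}^{n-1}j\,r_Z(j)^{2k}\to 0$ for each fixed $k$, so this term vanishes in the limit. Assembling the three contributions and summing the finitely many values of $k$ (with weights $d_{f_q,2k}^2(2k)!$) yields the stated formula, and its finiteness is automatic, being a finite linear combination of the convergent series above. The argument is essentially immediate; the only point demanding a little care is the uniform domination across the different exponents $2k$, which is exactly what $|r_Z(j)|\leqslant r_Z(0)$ supplies, reducing every $k\geqslant1$ to the single summability hypothesis (\ref{BM}). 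Beyond that, there is no real obstacle, since the delicate step --- the vanishing of the Ces\`aro average --- has already been dispatched in the preceding display.
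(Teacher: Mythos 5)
Your proposal is correct and follows essentially the same route as the paper: the paper derives the lemma directly from the exact variance identity (\ref{variance of Q_{f_q,n}(Z)}) together with the domination of $(r_Z(j)/r_Z(0))^{2k}$ by $(r_Z(j)/r_Z(0))^{2}$ under Condition (\ref{BM}) and the Ces\`aro estimate in the paragraph immediately preceding the statement. Your explicit use of $|r_Z(j)|\leqslant r_Z(0)$ to get the domination for all $j$ (rather than only for large $|j|$) is a minor, clean tidying of the same argument.
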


The following notation will be convenient.%
\begin{equation}
F_{f_{q},n}(Z):=\frac{U_{f_{q},n}(Z)}{\sqrt{E\left[ U_{f_{q},n}^{2}(Z)\right]
}}.  \label{defFZ}
\end{equation}%
When the expansion (\ref{polynomial function}) defining the polynomial $%
f_{q} $ has more than one term, we establish the following general central
limit theorem for $F_{f_{q},n}(Z)$ with explicit speed of convergence in
total variation.

\begin{theorem}
\label{CLT} Let $f_{q}$ be the function defined in (\ref{polynomial function}%
), and recall the stationary Gaussian process $Z$ with covariance function $%
r_{Z}$ on $\mathbb{Z}$, the partial sum $Q_{f_{q},n}(Z):=\frac{1}{n}%
\sum_{i=0}^{n-1}f_{q}(Z_{i})$, its renormalized version $U_{f_{q},n}(Z)$
defined in (\ref{defUZ}), and its standardized version $F_{f_{q},n}(Z)$ in (%
\ref{defFZ}). Denote $N\sim \mathcal{N}(0,1)$. Then there exists a constant $%
C_{q}(Z)$ depending on $q,f_{q}$ and $r_{Z}(0)$ such that
\begin{equation*}
d_{TV}\left( F_{f_{q},n}(Z),N\right) \leqslant C_{q}(Z)\sqrt{\sqrt{\kappa
_{4}(F_{f_{2},n}(Z))}+\kappa _{4}(F_{f_{2},n}(Z))}.
\end{equation*}%
Thus $F_{f_{q},n}(Z)$ is asymptotically normal as soon as $\kappa
_{4}(F_{f_{2},n}(Z))\rightarrow 0$. In addition, for large $n$,
\begin{equation*}
\kappa _{4}(F_{f_{2},n}(Z))=\frac{\kappa _{4}(U_{f_{2},n}(Z))}{\left( E\left[
U_{f_{2},n}^{2}(Z)\right] \right) ^{2}}=\mathcal{O}\left( \frac{\left(
\sum_{|j|<n}\left\vert r_{Z}(j)\right\vert ^{4/3}\right) ^{3}}{n\left( E%
\left[ U_{f_{2},n}^{2}(Z)\right] \right) ^{2}}\right) .
\end{equation*}
\end{theorem}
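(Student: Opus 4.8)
The plan is to run the Malliavin--Stein method for a sum of Wiener chaoses and to reduce every estimate to the single quadratic contraction that defines $\kappa_4(F_{f_2,n}(Z))$. First I would invoke the total variation bound from Section \ref{Wiener}: since $F:=F_{f_q,n}(Z)$ lies in $\oplus_{k=1}^{q/2}{\mathcal H}_{2k}\subset \mathbf{D}^{1,2}$ with $E[F]=0$ and $E[F^{2}]=1$, one has $d_{TV}(F,N)\leqslant 2E\left\vert 1-\langle DF,-DL^{-1}F\rangle_{\mathcal H}\right\vert$. Using the duality identity $E\langle DF,-DL^{-1}F\rangle_{\mathcal H}=E[F^{2}]=1$ together with Cauchy--Schwarz, the right-hand side is at most $2\sqrt{\mathrm{Var}(\langle DF,-DL^{-1}F\rangle_{\mathcal H})}$. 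Thus the whole first assertion reduces to controlling this variance, and the outer square root already accounts for the global $\sqrt{\,\cdot\,}$ in the statement.

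Next I would make the variance explicit. Writing $F=\sum_{k=1}^{q/2}I_{2k}(h_{2k,n})$ with $h_{2k,n}=g_{2k,n}/\sqrt{E[U_{f_{q},n}^{2}(Z)]}$, the relations $D_{r}I_{2k}(h)=2k\,I_{2k-1}(h(\cdot ,r))$ and $-D_{r}L^{-1}I_{2k}(h)=I_{2k-1}(h(\cdot ,r))$ give $\langle DF,-DL^{-1}F\rangle_{\mathcal H}=\sum_{k,l}2k\int_{\mathbf{R}_{+}}I_{2k-1}(h_{2k,n}(\cdot ,r))\,I_{2l-1}(h_{2l,n}(\cdot ,r))\,dr$. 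Expanding each product by the multiplication formula and integrating in $r$ turns this into a finite linear combination of multiple integrals $I_{2p}(h_{2k,n}\widetilde{\otimes}_{m}h_{2l,n})$ indexed by contraction orders $m$. By orthogonality of the chaoses, $\mathrm{Var}(\langle DF,-DL^{-1}F\rangle_{\mathcal H})$ equals the sum of squared $L^{2}$-norms of the non-constant components and is therefore bounded by a finite combination of the contraction norms $\|h_{2k,n}\otimes_{m}h_{2l,n}\|^{2}$ (the cross terms being handled by $\|f\otimes_{m}g\|\leqslant \|f\otimes_{m}f\|^{1/2}\|g\otimes_{m}g\|^{1/2}$).

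The crux is then a single reduction: every contraction norm is dominated by the purely quadratic one. Since $g_{2k,n}=d_{f_{q},2k}\,n^{-1/2}\sum_{i}\varepsilon_{i}^{\otimes 2k}$ and $\langle \varepsilon_{i},\varepsilon_{j}\rangle_{\mathcal H}=\rho(i-j)$ with $\rho(\cdot):=r_{Z}(\cdot)/r_{Z}(0)$, a direct computation gives $\|g_{2k,n}\otimes_{m}g_{2l,n}\|^{2}=\frac{d_{f_{q},2k}^{2}d_{f_{q},2l}^{2}}{n^{2}}\sum_{i,j,i',j'}\rho(i-j)^{m}\rho(i'-j')^{m}\rho(i-i')^{2k-m}\rho(j-j')^{2l-m}$, a four-cycle in the indices with all exponents $\geqslant 1$ for the interior contractions (the boundary full contractions produce a three-cycle and are treated separately, decaying at rate $\mathcal{O}(1/n)$). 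Because $|\rho|\leqslant 1$, replacing every exponent by $1$ only enlarges the sum, so each such norm is bounded by the quadratic four-cycle $\frac{C}{n^{2}}\sum_{i,j,i',j'}|\rho(i-j)\rho(i'-j')\rho(i-i')\rho(j-j')|$, which up to a constant is $\|g_{2,n}\otimes_{1}g_{2,n}\|^{2}$. As $E[U_{f_{q},n}^{2}(Z)]$ and $E[U_{f_{2},n}^{2}(Z)]$ both converge to finite positive limits under (\ref{BM}), after normalization every $\|h_{2k,n}\otimes_{m}h_{2l,n}\|^{2}\leqslant C_{q}(Z)\,\kappa_{4}(F_{f_{2},n}(Z))$. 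Hence $\mathrm{Var}(\langle DF,-DL^{-1}F\rangle_{\mathcal H})\leqslant C_{q}(Z)\,\kappa_{4}(F_{f_{2},n}(Z))$, giving $d_{TV}(F,N)\leqslant C_{q}(Z)\sqrt{\kappa_{4}(F_{f_{2},n}(Z))}$; since $\kappa_{4}\to 0$ this dominates the displayed right-hand side, so the stated bound follows a fortiori, and the asymptotic-normality claim is immediate.

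For the final display I would use the exact second-chaos identity $\kappa_{4}(U_{f_{2},n}(Z))=\kappa_{4}(I_{2}(g_{2,n}))=c\,\|g_{2,n}\widetilde{\otimes}_{1}g_{2,n}\|^{2}$ together with $\kappa_{4}(F_{f_{2},n}(Z))=\kappa_{4}(U_{f_{2},n}(Z))/(E[U_{f_{2},n}^{2}(Z)])^{2}$, which is just the definition after standardization. It remains to estimate the four-cycle sum, and this is the step I expect to be the main obstacle: setting $b=|\rho|\,\mathbf{1}_{\{|\cdot|<n\}}$ and reparametrizing the cycle by free differences, Parseval together with the triangle inequality gives $\frac{1}{n^{2}}\sum_{i,j,i',j'}|\rho(i-j)\rho(i'-j')\rho(i-i')\rho(j-j')|\leqslant \frac{C}{n}\int_{0}^{2\pi}|\widehat{b}(\xi )|^{4}\,d\xi$, and the Hausdorff--Young inequality $\|\widehat{b}\|_{L^{4}}\leqslant \|b\|_{\ell^{4/3}}$ converts the right-hand side into $\frac{C}{n}(\sum_{|j|<n}|\rho(j)|^{4/3})^{3}$. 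Absorbing $r_{Z}(0)$ into the constant yields the claimed $\mathcal{O}$-rate. The genuinely delicate points throughout are exactly the uniform reduction to the quadratic four-cycle via $|\rho|\leqslant 1$ and this Hausdorff--Young step with the $4/3$ exponent; the rest is bookkeeping of chaos orders and constants.
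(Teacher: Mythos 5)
Your overall route is the paper's own: the Malliavin--Stein bound $d_{TV}(F,N)\leqslant 2E\left\vert 1-\langle DF,-DL^{-1}F\rangle_{\mathcal H}\right\vert$, reduction of the variance of the Stein factor to contraction norms of the kernels $g_{2k,n}$, domination of those norms by the quadratic four-cycle using $\vert\rho\vert\leqslant 1$, and finally the $\ell^{4/3}$ estimate of the four-cycle. For that last display the paper simply cites \cite[Proposition 6.4]{BBNP}, whereas your Parseval plus Hausdorff--Young argument is a correct self-contained replacement; that part is fine.

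The genuine gap is in the cross terms $k\neq l$ (so it only surfaces for $q>2$, but the theorem is stated for general even $q$). For $k<l$ the product formula produces the full contraction $h_{2k,n}\otimes_{2k}h_{2l,n}$, which lives in the nonzero chaos $\mathcal H^{\otimes(2l-2k)}$ and therefore does contribute to the variance. Your claim that every contraction norm squared is $\leqslant C_{q}(Z)\,\kappa_{4}(F_{f_{2},n}(Z))$ fails for this term. The correct identity is $\Vert f\otimes_{r}g\Vert^{2}=\langle f\otimes_{p-r}f,\;g\otimes_{q-r}g\rangle$ with \emph{complementary} contraction orders, not the inequality $\Vert f\otimes_{m}g\Vert\leqslant\Vert f\otimes_{m}f\Vert^{1/2}\Vert g\otimes_{m}g\Vert^{1/2}$ you invoke; applied with $r=2k=p$ it gives only $\Vert h_{2k,n}\otimes_{2k}h_{2l,n}\Vert^{2}\leqslant\Vert h_{2k,n}\Vert^{2}\,\Vert h_{2l,n}\otimes_{2l-2k}h_{2l,n}\Vert$, an \emph{unsquared} interior contraction norm, hence of order $\sqrt{\kappa_{4}}$ rather than $\kappa_{4}$. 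Your fallback of estimating this boundary term directly as a "three-cycle" with $\vert\rho\vert\leqslant 1$ and asserting an $\mathcal O(1/n)$ decay does not work either: that route yields $n^{-1}\left(\sum_{|a|<n}|\rho(a)|\right)^{3}$, which even under (\ref{BM}) can grow like $n^{1/2}$. This $\sqrt{\kappa_{4}}$ contribution is exactly why the theorem's bound reads $C_{q}(Z)\sqrt{\sqrt{\kappa_{4}}+\kappa_{4}}$ (a fourth root of $\kappa_{4}$ in the leading regime) and not the stronger $C_{q}(Z)\sqrt{\kappa_{4}}$ you assert; your "a fortiori" conclusion therefore rests on an intermediate bound that is unproved and not obtainable by this method. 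Replacing your step with the complementary-contraction inequality recovers precisely the stated estimate, as in the paper's appendix.
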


\begin{proof}
See Appendix.
\end{proof}

The upper bound in the previous theorem does not require normal convergence,
and even when this convergence holds, it does not require that the variance $%
E[U_{f_{q},n}^{2}(Z)]$ be bounded. By Lemma \ref{uZlemma}, this boundedness
holds if and only if Condition (\ref{BM}) holds.

In the next corollary, we look at two examples, one under Condition (\ref{BM}%
) and one when it fails but normality still holds. In the former case, we
replace the normalization term $\sqrt{E\left[ U_{f_{q},n}^{2}(Z)\right] }$%
which is an unobservable sequence because it depends on the
parameter-dependent sequence $r_{Z}$, by the constant $\sqrt{u_{f_{q}}(Z)}$.
While this constant also depends on the parameter $\lambda _{f_{q}}(Z)$, it
allows one to measure the total variation distance of the data-based
estimator $U_{f_{q},n}(Z)$ itself to the fixed law $\mathcal{N}\left(
0,u_{f_{q}}(Z)\right) $, consistent with common methodological practice.
This change of normalization results in an additional term to reflect the
speed of convergence of $\sqrt{E\left[ U_{f_{q},n}^{2}(Z)\right] }$ to $%
\sqrt{u_{f_{q}}(Z)}$.

\begin{corollary}
\label{CLTCor}1) If $r_{Y}(k)\sim ck^{-\frac{1}{2}}$,then
\begin{equation}
E\left[ U_{f_{q},n}^{2}(Z)\right] \sim 4c^{2}d_{q,2}^{2}(Z)\log (n).
\label{logequiv}
\end{equation}%
and the upper bound on $d_{TV}\left( F_{f_{q},n}(Z),N\right) $ from Theorem %
\ref{CLT} holds with
\begin{equation}
\kappa _{4}(F_{f_{2},n}(Z))=\mathcal{O}\left( \log ^{-2}(n)\right) .
\label{logkappa4}
\end{equation}%
2) Under Condition (\ref{BM}), i.e. if $\sum_{j\in \mathbb{Z}}\left\vert
r_{Z}(j)\right\vert ^{2}<\infty $, we have
\begin{eqnarray}
&&d_{TV}\left( U_{f_{q},n}(Z),\mathcal{N}\left( 0,u_{f_{q}}(Z)\right) \right)
\notag \\
&\leqslant &C_{q}(Z)\sqrt{\sqrt{\kappa _{4}(F_{f_{2},n}(Z))}+\kappa
_{4}(F_{f_{2},n}(Z))}+2\left\vert 1-\frac{E\left[ U_{f_{q},n}^{2}(Z)\right]
}{u_{f_{q}}(Z)}\right\vert  \label{d_{TV}(F_{q,n}(Z),N)finite} \\
&\leqslant &C_{q}(Z)\sqrt{\sqrt{\frac{\kappa _{4}(U_{f_{2},n}(Z))}{%
\left\vert u_{f_{q}}(Z)\right\vert ^{2}}}+\frac{\kappa _{4}(U_{f_{2},n}(Z))}{%
\left\vert u_{f_{q}}(Z)\right\vert ^{2}}}+2\left\vert 1-\frac{E\left[
U_{f_{q},n}^{2}(Z)\right] }{u_{f_{q}}(Z)}\right\vert .  \label{CorPoint2b}
\end{eqnarray}

3) Under the additional assumption that $r_{Z}$ is asymptotically of
constant sign and monotone, the expressions in (\ref%
{d_{TV}(F_{q,n}(Z),N)finite}) and (\ref{CorPoint2b}) converges to $0$.
\end{corollary}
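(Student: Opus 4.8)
All three parts are driven by the variance identity (\ref{variance of Q_{f_q,n}(Z)}) (which, multiplied by $n$, equals $E[U_{f_{q},n}^{2}(Z)]$), by Lemma \ref{uZlemma}, and by the two bounds of Theorem \ref{CLT}; I would treat the parts in turn. Here condition (\ref{BM}) fails, since $r_{Z}(j)^{2}\sim c^{2}j^{-1}$ is not summable, so I work directly with $n$ times (\ref{variance of Q_{f_q,n}(Z)}). The term $k=1$ dominates: $(r_{Z}(j)/r_{Z}(0))^{2}\sim (c/r_{Z}(0))^{2}j^{-1}$ has partial sum of order $(c/r_{Z}(0))^{2}\log n$, producing the leading contribution $4c^{2}d_{q,2}^{2}(Z)\log n$, where the prefactor $4=2!\cdot 2$ and $d_{q,2}^{2}(Z)=(d_{f_{q},2}/r_{Z}(0))^{2}$ absorbs the normalisation. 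Every term with $k\geq 2$ has $(r_{Z}(j)/r_{Z}(0))^{2k}$ summable, and the corrections $\frac1n\sum_{j}j(r_{Z}(j)/r_{Z}(0))^{2k}$ are $\mathcal{O}(1)$ for $k=1$ and $o(1)$ for $k\geq 2$; all of these are $o(\log n)$, giving (\ref{logequiv}). For (\ref{logkappa4}) I substitute $r_{Z}(j)\sim c|j|^{-1/2}$ into the $\mathcal{O}$-bound of Theorem \ref{CLT}: since $|r_{Z}(j)|^{4/3}\sim c^{4/3}|j|^{-2/3}$ and $\sum_{j<n}j^{-2/3}\sim 3n^{1/3}$, the numerator $(\sum_{|j|<n}|r_{Z}(j)|^{4/3})^{3}$ is of order $n$, while $n(E[U_{f_{2},n}^{2}(Z)])^{2}$ is of order $n(\log n)^{2}$, so the ratio is $\mathcal{O}(\log^{-2}n)$.

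\noindent\textbf{Part 2 (finite-variance regime).} Total variation is invariant under a common nonzero rescaling, so $d_{TV}(U_{f_{q},n}(Z),\mathcal{N}(0,E[U_{f_{q},n}^{2}(Z)]))=d_{TV}(F_{f_{q},n}(Z),N)$, both sides being $F_{f_{q},n}(Z)$ and $N$ multiplied by $\sqrt{E[U_{f_{q},n}^{2}(Z)]}$ in view of (\ref{defFZ}). Inserting $\mathcal{N}(0,E[U_{f_{q},n}^{2}(Z)])$ as an intermediate law through the triangle inequality, I bound the first piece by Theorem \ref{CLT} and the second, a distance between two centered Gaussians, by a standard estimate of the form $d_{TV}(\mathcal{N}(0,\sigma^{2}),\mathcal{N}(0,\tau^{2}))\leqslant 2|1-\sigma^{2}/\tau^{2}|$ taken with $\tau^{2}=u_{f_{q}}(Z)$; this is exactly (\ref{d_{TV}(F_{q,n}(Z),N)finite}). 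Passing to (\ref{CorPoint2b}) then requires only the identity $\kappa_{4}(F_{f_{2},n}(Z))=\kappa_{4}(U_{f_{2},n}(Z))/(E[U_{f_{2},n}^{2}(Z)])^{2}$ from Theorem \ref{CLT}, together with $E[U_{f_{2},n}^{2}(Z)]\to u_{f_{2}}(Z)>0$ (Lemma \ref{uZlemma} under (\ref{BM})), so that the denominator is bounded below and may be replaced by $|u_{f_{q}}(Z)|^{2}$ at the cost of a bounded factor absorbed into $C_{q}(Z)$.

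\noindent\textbf{Part 3 (vanishing of the bound).} It suffices to send both summands of (\ref{d_{TV}(F_{q,n}(Z),N)finite}) to zero. The variance summand $|1-E[U_{f_{q},n}^{2}(Z)]/u_{f_{q}}(Z)|\to 0$ is precisely the convergence $E[U_{f_{q},n}^{2}(Z)]\to u_{f_{q}}(Z)$ of Lemma \ref{uZlemma}, valid under (\ref{BM}) alone. For the cumulant summand I must show $\kappa_{4}(F_{f_{2},n}(Z))\to 0$; by the $\mathcal{O}$-bound of Theorem \ref{CLT}, with the denominator bounded below as in Part 2, this reduces to $\sum_{|j|<n}|r_{Z}(j)|^{4/3}=o(n^{1/3})$. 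This is where the extra hypothesis enters: once $r_{Z}$ is eventually of constant sign and monotone with $\sum_{j}r_{Z}(j)^{2}<\infty$, so that $|r_{Z}(j)|$ is an eventually decreasing nonnegative sequence, the elementary fact $j\,r_{Z}(j)^{2}\to 0$ forces $r_{Z}(j)=o(j^{-1/2})$, whence $|r_{Z}(j)|^{4/3}=o(j^{-2/3})$ and $\sum_{|j|<n}|r_{Z}(j)|^{4/3}=o(n^{1/3})$, whose cube is $o(n)$ against the $n$ in the denominator.

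I expect Part 3's cumulant convergence to be the delicate point: passing from mere square-summability of $r_{Z}$ to the sharp $o(n^{1/3})$ control of the $\ell^{4/3}$ partial sums is exactly what the monotonicity and constant-sign hypotheses buy, since a crude H\"older bound gives only $\mathcal{O}(n^{1/3})$, hence boundedness but not decay of $\kappa_{4}(F_{f_{2},n}(Z))$; and one must simultaneously keep $(E[U_{f_{2},n}^{2}(Z)])^{2}$ bounded away from zero. The Gaussian variance-comparison estimate of Part 2 is routine but should be quoted with its precise constant so that the factor $2$ appearing in (\ref{d_{TV}(F_{q,n}(Z),N)finite}) is faithfully reproduced.
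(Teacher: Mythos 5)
Your proposal is correct, and Part 1 coincides with the paper's (very terse) argument: (\ref{logequiv}) is read off from the variance identity (\ref{variance of Q_{f_q,n}(Z)}) and (\ref{logkappa4}) from the second estimate of Theorem \ref{CLT}. In Parts 2 and 3 you take a genuinely different route. For Part 2 the paper does not pass through the intermediate law $\mathcal{N}(0,E[U_{f_{q},n}^{2}(Z)])$ and a Gaussian variance comparison; it returns to the Malliavin--Stein bound $d_{TV}(X,N)\leqslant 2E\left\vert 1-\langle DX,-DL^{-1}X\rangle _{\mathcal{H}}\right\vert $ applied to $U_{f_{q},n}(Z)/\sqrt{u_{f_{q}}(Z)}$, splits the inner product into the deterministic discrepancy $2\left\vert 1-E[U_{f_{q},n}^{2}(Z)]/u_{f_{q}}(Z)\right\vert $ plus $2\,(E[U_{f_{q},n}^{2}(Z)]/u_{f_{q}}(Z))$ times the quantity already controlled in the proof of Theorem \ref{CLT}, and uses $E[U_{f_{q},n}^{2}(Z)]/u_{f_{q}}(Z)\in (0,1)$ to drop the prefactor. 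Your version is legitimate (the comparison $d_{TV}(\mathcal{N}(0,\sigma ^{2}),\mathcal{N}(0,1))\leqslant 2|1-\sigma ^{2}|$ follows from the very same Proposition 2.4 of \cite{NP2013} applied to $\sigma W(h)$, so the factor $2$ is reproduced) and is more modular; the paper's avoids quoting an extra Gaussian estimate. In passing to (\ref{CorPoint2b}) you are in fact more careful than the paper: since $E[U_{f_{2},n}^{2}(Z)]\leqslant u_{f_{2}}(Z)$, replacing $(E[U_{f_{2},n}^{2}(Z)])^{2}$ by $|u_{f_{q}}(Z)|^{2}$ in the denominator is not a monotone substitution, and your justification via the bounded ratio (using $E[U_{f_{2},n}^{2}(Z)]\rightarrow u_{f_{2}}(Z)>0$) absorbed into the constant is the defensible one. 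For Part 3 the paper invokes \cite[Proposition 1]{NV2014} together with a Jensen-inequality computation to obtain the quantitative rate $\kappa _{4}(U_{f_{2},n}(Z))=\mathcal{O}(n^{-1/3})$; your elementary argument --- eventual monotonicity and constant sign plus square-summability give $j\,r_{Z}(j)^{2}\rightarrow 0$, hence $|r_{Z}(j)|^{4/3}=o(j^{-2/3})$ and $\sum_{|j|<n}|r_{Z}(j)|^{4/3}=o(n^{1/3})$ --- is self-contained and suffices for the stated conclusion, but yields only $o(1)$ rather than a rate. Each approach buys something: the paper's rate is reused elsewhere in the article, while yours dispenses with the external reference.
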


\begin{proof}
The estimate (\ref{logequiv}) is a direct consequence of (\ref{variance of
Q_{f_q,n}(Z)}). Also, by (\ref{logequiv}) and the second estimate of Theorem %
\ref{CLT} we obtain (\ref{logkappa4}). The result of point (1) is
established.

Next, we prove the estimate (\ref{d_{TV}(F_{q,n}(Z),N)finite}). We first
note that since the laws of $U_{f_{q},n}(Z)$ and of $N$ both have densities
with respect to Lebesgue's measure, $d_{TV}\left( U_{f_{q},n}(Z),\sqrt{%
u_{f_{q}}(Z)}N\right) $ is identical to $d_{TV}\left( U_{f_{q},n}(Z)/\sqrt{%
u_{f_{q}}(Z)},N\right) $. Next, from \cite[Proposition 2.4]{NP2013} (see
bullet points in Section \ref{Wiener}) we can write
\begin{eqnarray*}
&&d_{TV}\left( \frac{U_{f_{q},n}(Z)}{\sqrt{u_{f_{q}}(Z)}},N\right) \\
&\leqslant &2E\left\vert 1-\left\langle D\frac{U_{f_{q},n}(Z)}{\sqrt{%
u_{f_{q}}(Z)}},-DL^{-1}\frac{U_{f_{q},n}(Z)}{\sqrt{u_{f_{q}}(Z)}}%
\right\rangle _{\mathcal{H}}\right\vert \\
&\leqslant &2\left\vert 1-\frac{E\left[ U_{f_{q},n}^{2}(Z)\right] }{%
u_{f_{q}}(Z)}\right\vert +2\frac{E\left[ U_{f_{q},n}^{2}(Z)\right] }{%
u_{f_{q}}(Z)}E\left\vert 1-\left\langle D\frac{U_{f_{q},n}(Z)}{\sqrt{E\left[
U_{f_{q},n}^{2}(Z)\right] }},-DL^{-1}\frac{U_{f_{q},n}(Z)}{\sqrt{E\left[
U_{f_{q},n}^{2}(Z)\right] }}\right\rangle _{\mathcal{H}}\right\vert
\end{eqnarray*}%
By the expression in (\ref{variance of Q_{f_q,n}(Z)}), the ratio $E\left[
U_{f_{q},n}^{2}(Z)\right] /u_{f_{q}}(Z)$ is in $(0,1)$. Therefore the first
estimate in point (2) follows by the main estimate in the proof of Theorem %
\ref{CLT}. The estimate (\ref{CorPoint2b}) is an elementary consequence of
the same fact that $E\left[ U_{f_{q},n}^{2}(Z)\right] <u_{f_{q}}(Z)$. Point
(2) is thus fully established.

To prove the corollary's final claim in point (3), we first note that by
Lemma \ref{uZlemma}, the term $\left\vert 1-E\left[ U_{f_{q},n}^{2}(Z)\right]
/u_{f_{q}}(Z)\right\vert $ tends to $0$. Thus we only need to show that
under Condition (\ref{BM}) and the additional monotonicity assumption on $%
r_{Z}$, $\kappa _{4}(U_{f_{2},n}(Z))$ also tends to $0$. By the conclusion
of Theorem \ref{CLT} and the finiteness of $u_{f_{2}}\left( Z\right) $, we
have%
\begin{equation}
\kappa _{4}(U_{f_{2},n}(Z))=\mathcal{O}\left( n^{-1}\left(
\sum_{|j|<n}\left\vert r_{Z}(j)\right\vert ^{4/3}\right) ^{3}\right)
=:K_{4}\left( n\right) .  \label{K4}
\end{equation}%
Next, we borrow from \cite[Proposition 1]{NV2014} that, under the additional
assumptions on $r_{Z}$ in the last statement of the theorem, and using the
finiteness of $u_{f_{2}}\left( Z\right) $,
\begin{equation*}
K_{4}\left( n\right) =\mathcal{O}\left( n^{-1/3}\left( n^{-1/2}\left(
\sum_{|j|<n}\left\vert r_{Z}(j)\right\vert ^{3/2}\right) ^{2}\right)
^{4/3}\right) .
\end{equation*}%
From Jensen's inequality and Lemma \ref{uZlemma} we get
\begin{equation*}
n^{-1/2}\left( \sum_{|j|<n}\left\vert r_{Z}(j)\right\vert ^{3/2}\right)
^{2}\leqslant 4n^{3/2}\left( \frac{1}{2n}\sum_{|j|<n}\left\vert
r_{Z}(j)\right\vert ^{2}\right) ^{3/2}\leqslant \sqrt{2}u_{f_{2}}\left(
Z\right) <\infty .
\end{equation*}%
Thus by (\ref{K4}), $\kappa _{4}(U_{f_{2},n}(Z))=\mathcal{O}\left(
n^{-1/3}\right) $, which finishes the proof of the corollary.
\end{proof}

The assumptions of Corollary \ref{CLTCor} are very weak, given a memory
length shorter than that of fractional Gaussian noise with Hurst parameter $%
H=5/8$. We state this formally in the next Corollary, while leaving the
question of how to rid oneself of the condition \textquotedblleft $H<5/8$%
\textquotedblright\ to Section \ref{IRC}. In fact, the computations outlined
in Section \ref{IRC}, which are based on the convergence speeds (\ref%
{variancespeed}) and (\ref{kappa4speed}) identified in Section \ref{EXGEN}
which immediately follows, are sufficient to obtain this corollary, whose
simple proof we thus omit. Note however that the corollary's result is not
necessarily sharp; we will see in Section \ref{QC} that it is not sharp for $%
q=2$.

\begin{corollary}
\label{CLTCor2}Under the notation of Theorem \ref{CLT}, assume that for some
$H<5/8$,
\begin{equation*}
r_{Z}\left( k\right) \leqslant ck^{2H-2}.
\end{equation*}%
Then Condition (\ref{BM}) holds and for some constant $C$ depending only on $%
q$ and $r_{Z}\left( 0\right) ,$
\begin{equation*}
d_{TV}\left( U_{f_{q},n}(Z),\mathcal{N}\left( 0,u_{f_{q}}(Z)\right) \right)
\leqslant Cn^{-1/4}.
\end{equation*}
\end{corollary}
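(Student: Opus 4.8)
The plan is to invoke the upper bound in Corollary \ref{CLTCor}, point 2, and to control its two contributions separately using the power decay $r_Z(k)\leqslant ck^{2H-2}$. First I would verify Condition (\ref{BM}): squaring the hypothesis gives $r_Z(j)^2\leqslant c^2|j|^{4H-4}$, and since $H<5/8<3/4$ we have $4H-4<-1$, so $\sum_j r_Z(j)^2<\infty$. Hence $u_{f_2}(Z)<\infty$, and by Lemma \ref{uZlemma} every $u_{f_q}(Z)$ is finite and strictly positive, which legitimizes the normalization by $\sqrt{u_{f_q}(Z)}$ appearing in the Corollary.

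The heart of the matter is the fourth-cumulant term, where the threshold $H<5/8$ enters. I would apply the second estimate of Theorem \ref{CLT},
\[
\kappa_4\!\left(F_{f_2,n}(Z)\right)=\mathcal{O}\!\left(\frac{\left(\sum_{|j|<n}|r_Z(j)|^{4/3}\right)^3}{n\left(E\!\left[U_{f_2,n}^2(Z)\right]\right)^2}\right),
\]
and observe that $|r_Z(j)|^{4/3}\leqslant c^{4/3}|j|^{(8H-8)/3}$ with $(8H-8)/3<-1$ \emph{exactly} when $H<5/8$. Thus $\sum_{|j|<n}|r_Z(j)|^{4/3}=\mathcal{O}(1)$; since $E[U_{f_2,n}^2(Z)]\to u_{f_2}(Z)>0$ by Lemma \ref{uZlemma}, the denominator is comparable to $n$, giving $\kappa_4(F_{f_2,n}(Z))=\mathcal{O}(1/n)$. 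Feeding this into the square-root structure of the Corollary's bound, the leading contribution is $\sqrt{\sqrt{\mathcal{O}(1/n)}}=\mathcal{O}(n^{-1/4})$, which is precisely the claimed rate.

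It remains to show that the variance-gap term $\bigl|1-E[U_{f_q,n}^2(Z)]/u_{f_q}(Z)\bigr|$ decays faster than $n^{-1/4}$. I would read this off the explicit formula (\ref{variance of Q_{f_q,n}(Z)}): writing $\rho_j:=r_Z(j)/r_Z(0)$, the gap $u_{f_q}(Z)-E[U_{f_q,n}^2(Z)]$ is, for each $k$, a sum of a tail $\sum_{j\geqslant n}\rho_j^{2k}$ and a term $\frac{1}{n}\sum_{j<n}j\,\rho_j^{2k}$. The case $k=1$ dominates; the tail is $\mathcal{O}(n^{4H-3})$, while the second term is $\mathcal{O}(1/n)$ when $H<1/2$, $\mathcal{O}(n^{-1}\log n)$ when $H=1/2$, and $\mathcal{O}(n^{4H-3})$ when $1/2<H<5/8$. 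Since $H<5/8$ forces $4H-3<-1/2$, every one of these is $o(n^{-1/2})$, hence negligible against $n^{-1/4}$.

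Combining the two estimates shows the bound in Corollary \ref{CLTCor} is $\mathcal{O}(n^{-1/4})$, as asserted. I expect the main obstacle to be the bookkeeping in the variance-gap term, where one must separate the three regimes of $H$ relative to $1/2$; but since the resulting exponent always stays strictly below $-1/2$, the cumulant term of order $n^{-1/4}$ invariably dominates, so no single case is genuinely delicate.
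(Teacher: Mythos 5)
Your proof is correct and follows exactly the route the paper intends: the paper in fact omits the proof of this corollary, stating that it follows from feeding the convergence speeds (\ref{variancespeed}) and (\ref{kappa4speed}) into Corollary \ref{CLTCor}(2), which is precisely what you carry out (Condition (\ref{BM}) from $4H-4<-1$, the cumulant term $\kappa_4(F_{f_2,n}(Z))=\mathcal{O}(1/n)$ from the summability of $|r_Z(j)|^{4/3}$ under $H<5/8$, and the variance gap of order $n^{4H-3}=o(n^{-1/2})$). Your three-regime bookkeeping for the term $\frac{1}{n}\sum_{j<n}j\,(r_Z(j)/r_Z(0))^{2k}$ is, if anything, slightly more careful than the paper's tail-only expression (\ref{variancespeed}), but it reaches the same conclusion.
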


\begin{remark}
\label{NCLT}In most cases where Condition (\ref{BM}) fails, the series'
divergence occurs so fast that $U_{f_{q},n}(Z)$'s asymptotics are not
normal. Under certain special circumstances, namely a slowly modulated $%
(2H-2)$-self-similarity assumption on $r_{Z}$, classical tools such as in
\cite{DM} can be used to show that $U_{f_{q},n}(Z)$ tends to a so-called
(scaled) Rosenblatt law $G_{\infty }^{\left( H\right) }$. A now classical
result of Davydov and Martinova \cite{DaMa} was revived in recent years in
\cite{BN, NV2014} to estimate total-variation distances to $G_{\infty
}^{\left( H\right) }$. This can be achieved in our context as well, though
for the sake of conciseness, we omit this study, only stating two basic
results here, whose proofs would proceed as in \cite{NV2014} and \cite{BN}
respectively.

\begin{enumerate}
\item Assume that for some $H\in (3/4,1)$ and some $\beta >0$,
asymptotically
\begin{equation*}
\left( 1+o\left( 1\right) \right) \log ^{-\beta }\left( \left\vert
k\right\vert \right) \left\vert k\right\vert ^{2H-2}\leqslant \left\vert
r_{Z}(k)\right\vert /r_{Z}\left( 0\right) \leqslant \left( 1+o\left(
1\right) \right) \log ^{\beta }\left( \left\vert k\right\vert \right)
\left\vert k\right\vert ^{2H-2},
\end{equation*}%
then for some constant $C$ depending on $r$ and $H$,
\begin{equation*}
d_{W}\left( \frac{U_{f_{2},n}(Z)}{2\sum_{\left\vert k\right\vert
>n}r_{Z}(k)^{2}},\frac{1}{2}\sqrt{\frac{4H-3}{2\Gamma (2-2H)\cos \left(
\frac{(2-2H)\pi }{2}\right) }}G_{\infty }^{\left( H\right) }\right)
\leqslant \frac{C}{\sqrt{\log n}}.
\end{equation*}

\item If $\beta =0$, then $f_{2}$ can be replaced above by $f_{q}$ for any
even $q$, and $3/4$ can be replaced by $1-1/(2q)$, and $\sqrt{\log n}$ by $%
n^{H-1+1/(2q)}$.
\end{enumerate}

The law of $G_{\infty }^{\left( H\right) }$ can be represented under a
standard white noise measure $W$ on $\mathbb{C}$ as
\begin{equation*}
G_{\infty }^{\left( H\right) }=\int \int_{{\mathbb{R}}^{2}}e^{i(x+y)}\frac{%
e^{i(x+y)}-1}{i(x+y)}|xy|^{1/2-H}W(dx)W(dy).
\end{equation*}
\end{remark}

\subsection{Examples\label{EX}}

\subsubsection{General considerations\label{EXGEN}}

For any degree-$q$ polynomial $f_{q}$ as given in (\ref{polynomial function}%
), we saw that under the assumptions of Theorem \ref{consistency for Z} and
Theorem \ref{CLT}, $Q_{f_{q},n}(Z)$ serves as a consistent and
asymptotically normal generalized method of moments estimator $\lambda
_{f_{q}}(Z):=E\left[ f_{q}(Z_{0})\right] $, or indeed for any parameter
which can be extracted from this quantity.

Condition (\ref{BM}) should be sought in order to invoke the explicit speed
of convergence result of the second part of Corollary \ref{CLTCor}. This
assumption is generic for any stationary process whose memory length is
bounded above strictly by that of the so-called fractional Gaussian noise
with Hurst parameter $H=3/4$. The articles \cite{NV2014} and \cite{NP2013}
can be consulted for precise statements of what this means in general scales
and in the fractional Brownian scale. Section \ref{IRC} can be consulted for
a simple transformation of the data to ensure that Condition (\ref{BM})
holds for any long-memory stationary Gaussian sequence with an
asymptotically power-law autocorrelation decay.

Under Condition (\ref{BM}), Part (2) of Corollary \ref{CLTCor} shows that
the speed of convergence in total variation is determined by the choice of $%
q $ via the leading constant $C_{q}\left( Z\right) $ and the speed of
convergence of the variance term $E\left[ U_{f_{q},n}^{2}(Z)\right] $.
Specifically, by Lemma \ref{uZlemma}, the term corresponding to this
variance convergence, i.e. the last term in (\ref{CorPoint2b}), is given by
the tail expression%
\begin{equation}
\frac{2}{u_{f_{q}}(Z)}\sum_{k=1}^{q/2}d_{f_{q},2k}^{2}\left( Z\right)
(2k)!\sum_{\left\vert j\right\vert >n}\left( \frac{r_{Z}(j)}{r_{Z}(0)}%
\right) ^{2k}.  \label{variancespeed}
\end{equation}%
The other term in (\ref{CorPoint2b}), which corresponds to the core normal
convergence from Theorem \ref{CLT}, asymptotically equivalent to
\begin{equation}
\frac{C_{q}(Z)}{\sqrt{u_{f_{q}}(Z)}}\left( \kappa
_{4}(U_{f_{2},n}(Z))\right) ^{1/4};  \label{kappa4speed}
\end{equation}%
this has a speed which is determined by the fourth root of the estimator's
fourth cumulant no matter what $q$ is, and the leading constant $C_{q}\left(
Z\right) $ can be computed via the explicit formulas (\ref{C1q}), (\ref{C2q}%
), and (\ref{Cq}) in the proof of Theorem \ref{CLT}.

There may be a trade-off between choosing a large $q$ to effect the size of
the coefficients $d_{f_{q},2k}^{2}$ and a small $q$ to control the value of $%
C_{q}\left( Z\right) $. The constants in these expressions are sufficiently
complex to make it difficult to discern a general rule on how to choose $q$,
particularly since the speed of convergence of $E\left[ U_{f_{q},n}^{2}(Z)%
\right] $ depends heavily on the entire sequence $r_{Z}$. But this can be
determined on a case-by-case basis since all the constants can be computed
explicitly, as the examples in the subsections that follow show.

Before working out those examples, we finish this section with an attempt to
explain in qualitative terms where the trade-off may come from. For the sake
of argument, let us compare the constants for $q=2$ with those for a large $%
q $.

For $q=2$, the constant $C_{2}\left( Z\right) $ is of moderate size:
inspection of the proof of Theorem \ref{CLT} shows that $C_{2}\left(
Z\right) =2\sqrt{2}r_{Z}\left( 0\right) $, while the variance-convergence
term in (\ref{variancespeed}) easily computes to%
\begin{equation*}
\frac{2}{u_{f_{2}}(Z)}\sum_{\left\vert j\right\vert >n}r_{Z}(j)^{2}=\frac{%
\sum_{\left\vert j\right\vert >n}r_{Z}(j)^{2}}{\sum_{j\in \mathbb{Z}%
}r_{Z}(j)^{2}}.
\end{equation*}

One may choose a high-degree polynomial $f_{q}$ such that the constant $%
d_{f_{q},2}^{2}\left( Z\right) $ may be much smaller than $C_{2}\left(
Z\right) $; in this case, the dominant term in (\ref{variancespeed}), which
is for $k=1$, has the same behavior in terms of $n$ as for $q=2$, but would
be minimized because of the presence of the small multiplicative factor $%
d_{f_{q},2}^{2}\left( Z\right) $. If the other constants $%
d_{f_{q},2k}^{2}\left( Z\right) $ for $k\geq 2$ were much larger, this would
have little effect for large $n$ since they would be multiplicative of the
asymptotically negligible tails $\sum_{\left\vert j\right\vert
>n}r_{Z}(j)^{2k}$ in (\ref{variancespeed}). In other words, for large $q$,
the speed of convergence in (\ref{variancespeed}) can be controlled by
choosing $f_{q}$ with a small contribution to the term corresponding to $%
H_{2}$ in the Hermite polynomial decomposition (\ref{polynomial function}).
However, this must be traded off against the size of the constant $%
C_{q}\left( Z\right) $. All the terms in the expression $C_{q}\left(
Z\right) $ are additive and grow quickly as $q$ increases; the term of
highest order is proportional to $q^{3/2}\left( 2q-4\right) !$. Such rapid
growth does not seem to be the case for $d_{f_{q},2}^{2}\left( Z\right) $,
as illustrated in the next two typical examples, where access to the
variance parameter $E\left( Z_{0}^{2}\right) =r_{Z}\left( 0\right) $ itself
is essentially immediate.

\subsubsection{Hermite variation}

Let $q\in \mathbb{N}^{\ast }$ be an even integer. Then the $q$th Hermite
polynomial $H_{q}$ can be written as in (\ref{polynomial function}). Indeed,
it follows from the fact that for $q$ even, $H_{q}(x)=\sum_{k=0}^{\frac{q}{2}%
}\frac{q!(-1)^{k}}{k!(q-2k)!2^{k}}x^{q-2k}$. Then we can write,
\begin{equation*}
H_{q}(x)=EH_{q}(Z_{0})+\sum_{k=1}^{q/2}d_{H_{q},2k}\left( Z\right)
H_{2k}\left( \frac{x}{\sqrt{r_{Z}(0)}}\right)
\end{equation*}%
where for any $k\in \{1,\ldots ,\frac{q}{2}-1\}$
\begin{eqnarray*}
d_{H_{q},q-2k}(Z) &=&(-1)^{k}\left( r_{Z}^{\frac{q}{2}}(0)-r_{Z}^{\frac{q}{2}%
-1}(0)\right) a_{q-2}^{q}a_{q-4}^{q-2}\dots a_{q-2k}^{q-2k+2} \\
+ &&(-1)^{k-1}\left( r_{Z}^{\frac{q}{2}}(0)-r_{Z}^{\frac{q}{2}-2}(0)\right)
a_{q-4}^{q}a_{q-6}^{q-4}\ldots a_{q-2k}^{q-2k+2} \\
+ &&\ldots \\
+ &&(-1)^{1}\left( r_{Z}^{\frac{q}{2}}(0)-r_{Z}^{\frac{q}{2}-k}(0)\right)
a_{q-2k}^{q}
\end{eqnarray*}%
and $d_{H_{q},q}(Z)=r_{Z}^{\frac{q}{2}}(0)$, where for every $p$ even, the
constants
\begin{equation*}
a_{p-2k}^{p}=\frac{p!(-1)^{k}}{k!(p-2k)!2^{k}}\quad k=0,\ldots ,p/2
\end{equation*}%
are the ones which satisfy
\begin{equation*}
H_{p}(x)=\sum_{k=0}^{p/2}a_{p-2k}^{p}x^{p-2k}.
\end{equation*}%
Consequently, the Hermite variation
\begin{equation*}
Q_{H_{q},n}(Z):=\frac{1}{n}\sum_{k=0}^{n-1}H_{q}(Z_{k})
\end{equation*}%
satisfies the results given in Sections \ref{Consist} and \ref{AD}. Moreover
the parameter $\lambda _{H_{q}}(Z)$ has the following explicit expression
\begin{eqnarray}
\lambda _{H_{q}}(Z) &=&E\left[ H_{q}(Z_{0})\right] =\sum_{k=0}^{\frac{q}{2}}%
\frac{q!(-1)^{k}}{k!(q-2k)!2^{k}}E\left( Z_{0}^{q-2k}\right)  \notag \\
&=&\frac{q!}{2^{q/2}}\sum_{k=0}^{\frac{q}{2}}\frac{(-1)^{k}}{k!(\frac{q}{2}%
-k)!}\left[ E\left( Z_{0}^{2}\right) \right] ^{\frac{q}{2}-k}  \notag \\
&=&\frac{q!}{(\frac{q}{2})!2^{q/2}}\left( E\left( Z_{0}^{2}\right) -1\right)
^{q/2}.  \label{alphaHqcase}
\end{eqnarray}

Thus the results of the previous sections provide explicit means for
computing total variation speeds of convergence in a generalized method of
moments based on Hermite polynomials for estimating the variance parameter $%
E\left( Z_{0}^{2}\right) =r_{Z}\left( 0\right) $. Because of the simple form
of (\ref{alphaHqcase}) as a function of $r_{Z}\left( 0\right) $, for any
sequence satisfying Condition (\ref{BM}), one can immediately test the
hypothesis of whether $r_{Z}\left( 0\right) $ equals a specific value $%
\sigma ^{2}$, using Corollary \ref{CLTCor} to account precisely for the
error term due to non-infinite sample size. Since the corollary provides the
error in total variation distance, this error is uniform over $\sigma ^{2}$
by definition.

\subsubsection{Power variation}

Let $q\in \mathbb{N}^{\ast }$ be an even integer. Let $c_{q,2k}=\frac{1}{%
(2k)!}\int_{-\infty }^{\infty }\frac{e^{-x^{2}/2}}{\sqrt{2\pi }}%
x^{q}H_{2k}(x)\ dx$ be the coefficients of the monomial $\phi _{q}(x):=x^{q}$
expanded in the basis of Hermite polynomials :
\begin{equation*}
x^{q}=\sum_{k=0}^{q/2}c_{q,2k}H_{2k}(x).
\end{equation*}%
It is known that
\begin{equation*}
c_{q,2k}=\frac{q!}{2^{q/2-k}\left( q/2-k\right) !\left( 2k\right) !}.
\end{equation*}%
Thus, by relying directly on the results we just saw in the case of Hermite
polynomials, the polynomial function $\phi _{q}$ can be written as in (\ref%
{polynomial function}). As consequence, the power variation
\begin{equation*}
Q_{\phi _{q},n}(Z)=\frac{1}{n}\sum_{i=0}^{n-1}(Z_{i})^{q}.
\end{equation*}%
satisfies the results given in Sections \ref{Consist} and \ref{AD}. In this
case, the parameter $\lambda _{\phi _{q}}(Z)$ has the following explicit
expression
\begin{equation}
\lambda _{\phi _{q}}(Z)=E\left[ (Z_{0})^{q}\right] =\frac{q!}{(\frac{q}{2}%
)!2^{q/2}}\left[ E\left( Z_{0}^{2}\right) \right] ^{q/2}.
\label{alphaPhicase}
\end{equation}

\section{Parameter estimation for non-stationary Gaussian process\label%
{NONSTAT}}

In practice, it is often the case that the data comes from a sequence which
has visibly not yet reached a stationary regime. This is a typical situation
for the solution of a stochastic system which initiates from a point mass
rather than the system's stationary distribution; we will see examples of
this in Sections \ref{AOUP} and \ref{Multi}. The rate at which stationarity
is reached heavily affects other rates of convergence, including the total
variation speeds in the central limit theorem. To illustrate this phenomenon
more broadly than in the two aforementioned sections, in this section we
consider a general class of models which can be written as the sum of a
stationary model and a non-stationary nuisance term which vanishes
asymptotically.

\subsection{General case}

For a polynomial $f_{q}$ of even degree $q$, and a random sequence $X$,
recall the polynomial variation notation introduced in Section \ref{NBQ}:%
\begin{equation*}
Q_{f_{q},n}(X):=\frac{1}{n}\sum_{i=0}^{n-1}f_{q}(X_{i}).
\end{equation*}%
Let $\left( Z_{k}\right) _{k\in \mathbb{Z}}$ be a centered stationary
Gaussian process and let $\left( Y_{k}\right) _{k\in \mathbb{Z}}$ be a
process such that the following condition holds: there exist $p_{0}\in
\mathbb{N}$ and a constant $\gamma >0$ such that for every $p\geq p_{0}$ and
for all $n\in \mathbb{N}$,
\begin{equation}
\left\Vert Q_{f_{q},n}(Z+Y)-Q_{f_{q},n}(Z)\right\Vert _{L^{p}(\Omega )}=%
\mathcal{O}\left( n^{-\gamma }\right) .  \label{hypothesis on Z+Y}
\end{equation}%
Combining (\ref{hypothesis on Z+Y}), Lemma \ref{Borel-Cantelli} in the
Appendix, and Theorem \ref{consistency for Z} we get the following result.

\begin{theorem}
\label{consistency for Z+Y}Assume that the conditions (\ref{hypothesis on
Z+Y}) and (\ref{condition of consistency}) hold. Then
\begin{equation*}
Q_{f_{q},n}(Z+Y)\longrightarrow \lambda _{f_{q}}(Z)
\end{equation*}%
almost surely as $n\rightarrow \infty $.
\end{theorem}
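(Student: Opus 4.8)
The plan is to split the deviation of the estimator into a stationary part, governed by Theorem~\ref{consistency for Z}, and a nuisance part, governed by hypothesis (\ref{hypothesis on Z+Y}), and to show each vanishes almost surely. First I would record the exact decomposition
\[
Q_{f_{q},n}(Z+Y)-\lambda_{f_{q}}(Z)=\underbrace{\bigl(Q_{f_{q},n}(Z+Y)-Q_{f_{q},n}(Z)\bigr)}_{=:\,D_{n}}+\bigl(Q_{f_{q},n}(Z)-\lambda_{f_{q}}(Z)\bigr),
\]
so that it suffices to prove $D_{n}\to0$ and $Q_{f_{q},n}(Z)-\lambda_{f_{q}}(Z)\to0$, both almost surely.

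The second summand is immediate: since (\ref{condition of consistency}) holds by assumption, Theorem~\ref{consistency for Z} gives $Q_{f_{q},n}(Z)\to\lambda_{f_{q}}(Z)$ almost surely. For the nuisance term $D_{n}$, I would exploit the fact that (\ref{hypothesis on Z+Y}) supplies, for every $p\geq p_{0}$, a bound $\Vert D_{n}\Vert_{L^{p}(\Omega)}=\mathcal{O}(n^{-\gamma})$. Fixing an integer $p\geq p_{0}$ large enough that $\gamma p>1$ (possible since $\gamma>0$), Markov's inequality yields $P(|D_{n}|>\delta)\leq\delta^{-p}\Vert D_{n}\Vert_{L^{p}}^{p}=\mathcal{O}(n^{-\gamma p})$ for each fixed $\delta>0$, and $\sum_{n}n^{-\gamma p}<\infty$. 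This is exactly the summability hypothesis behind the criterion of Lemma~\ref{Borel-Cantelli}, whence $D_{n}\to0$ almost surely. Combining the two limits through the displayed decomposition gives $Q_{f_{q},n}(Z+Y)\to\lambda_{f_{q}}(Z)$ almost surely, as claimed.

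The one point requiring care is that one cannot handle $D_{n}$ the way Theorem~\ref{consistency for Z} handles $Q_{f_{q},n}(Z)-\lambda_{f_{q}}(Z)$, namely by first obtaining an $L^{2}$ estimate and then upgrading it to all $L^{p}$ via hypercontractivity (\ref{hypercontractivity}): the perturbation $Y$ is arbitrary, so $D_{n}$ need not lie in a fixed finite sum of Wiener chaoses, and no such bootstrap is available. This is precisely why (\ref{hypothesis on Z+Y}) is postulated as a uniform-in-$n$ $L^{p}$ bound for all large $p$ rather than merely an $L^{2}$ bound: it delivers the high-moment control directly, so that Lemma~\ref{Borel-Cantelli} applies with no structural assumption on $Y$. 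Everything else reduces to the elementary summability check above, after which the almost-sure convergence is automatic.
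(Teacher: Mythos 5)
Your proposal is correct and follows essentially the same route as the paper, which simply combines the decomposition $Q_{f_{q},n}(Z+Y)-\lambda_{f_{q}}(Z)=\bigl(Q_{f_{q},n}(Z+Y)-Q_{f_{q},n}(Z)\bigr)+\bigl(Q_{f_{q},n}(Z)-\lambda_{f_{q}}(Z)\bigr)$ with Theorem~\ref{consistency for Z} for the second term and hypothesis (\ref{hypothesis on Z+Y}) plus Lemma~\ref{Borel-Cantelli} for the first. Your explicit Markov--Borel--Cantelli computation is exactly what underlies that lemma, and your remark on why hypercontractivity is unavailable for the nuisance term is a fair observation, though not needed for the argument.
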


In Corollary \ref{CLTCor}, we handled a discrepancy at the level of
deterministic normalizing constants, while retaining statements with the
total variation distance. In this section, our discrepancy comes at a
slightly higher price because it is stochastic. We use instead the
Wasserstein distance $d_{W}$, in order to rely on the following elementary
lemma whose proof is in the Appendix.

\begin{lemma}
\label{dWL1}Let $Y$ and $Z$ be random variables defined on the same
probability space. Then
\begin{equation*}
d_{W}\left( Y+Z,N\right) \leqslant d_{W}\left( Z,N\right) +\left\Vert
Y\right\Vert _{L^{1}\left( \Omega \right) }.
\end{equation*}
\end{lemma}

Another lemma, proved for instance in \cite[Theorem 5.1.3]{NP-book}, relates
the Wasserstein distance to a connection between Stein's method and the
Malliavin calculus.

\begin{lemma}
If $F$ has mean $0$, variance $1$, and a square-integrable Malliavin
derivative, then
\begin{equation*}
d_{W}\left( F,N\right) \leqslant \sqrt{\frac{2}{\pi }}E\left[ \left\vert
1-\left\langle DF,-DL^{-1}F\right\rangle \right\vert \right]
\end{equation*}
\end{lemma}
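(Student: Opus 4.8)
The plan is to combine Stein's method for normal approximation with the Malliavin integration-by-parts formula, in exactly the fashion that underlies the total-variation estimate quoted earlier in Section \ref{Wiener}. I would begin from the characterization $d_{W}\left( F,N\right) =\sup_{h}\left\vert E\left[ h(F)\right] -E\left[ h(N)\right] \right\vert $, the supremum running over all $1$-Lipschitz $h$. For each such $h$, introduce the solution $f_{h}$ of the Stein equation $f_{h}^{\prime }(x)-xf_{h}(x)=h(x)-E\left[ h(N)\right] $. The first key input is the classical regularity estimate for this equation: when $h\in Lip(1)$, the solution $f_{h}$ is continuously differentiable and its derivative obeys $\left\Vert f_{h}^{\prime }\right\Vert _{\infty }\leqslant \sqrt{2/\pi }$. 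This is precisely where the constant $\sqrt{2/\pi }$ in the statement originates, so reproducing (or citing) this bound is the first step.

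Second, I would evaluate the Stein equation at $F$ and take expectations, obtaining $E\left[ h(F)\right] -E\left[ h(N)\right] =E\left[ f_{h}^{\prime }(F)\right] -E\left[ Ff_{h}(F)\right] $. The heart of the argument is then to rewrite the term $E\left[ Ff_{h}(F)\right] $ using the Malliavin integration-by-parts formula valid for $F\in \mathbf{D}^{1,2}$ with $E\left[ F\right] =0$: for sufficiently regular $g$, one has $E\left[ Fg(F)\right] =E\left[ g^{\prime }(F)\left\langle DF,-DL^{-1}F\right\rangle _{\mathcal{H}}\right] $. This identity follows from writing $F=\delta \left( -DL^{-1}F\right) $, where $\delta $ denotes the divergence (the adjoint of $D$), using the duality between $D$ and $\delta $, and applying the chain rule $Dg(F)=g^{\prime }(F)\,DF$. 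Applying it with $g=f_{h}$ gives $E\left[ Ff_{h}(F)\right] =E\left[ f_{h}^{\prime }(F)\left\langle DF,-DL^{-1}F\right\rangle _{\mathcal{H}}\right] $, and therefore $E\left[ h(F)\right] -E\left[ h(N)\right] =E\left[ f_{h}^{\prime }(F)\left( 1-\left\langle DF,-DL^{-1}F\right\rangle _{\mathcal{H}}\right) \right] $.

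To finish, I would bound the right-hand side in absolute value by $\left\Vert f_{h}^{\prime }\right\Vert _{\infty }\,E\left[ \left\vert 1-\left\langle DF,-DL^{-1}F\right\rangle _{\mathcal{H}}\right\vert \right] \leqslant \sqrt{2/\pi }\,E\left[ \left\vert 1-\left\langle DF,-DL^{-1}F\right\rangle _{\mathcal{H}}\right\vert \right] $, a bound which is uniform in $h\in Lip(1)$. Taking the supremum over all such $h$ then yields the claimed inequality.

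The main obstacle I anticipate is technical rather than conceptual: justifying the integration-by-parts formula when $g=f_{h}$ is only guaranteed to be $C^{1}$ with a Lipschitz (rather than $C^{1}$) derivative. One must check that $f_{h}(F)$ genuinely lies in the domain of $D$ and that the chain rule holds in the appropriate weak sense, which is handled by a standard approximation argument: smoothing $h$, and hence $f_{h}$, by convolution, applying the formula to the regularized functions where every operation is unambiguous, and then passing to the limit using the uniform derivative bound together with the square-integrability of $DF$ assumed in the hypothesis. The hypotheses that $F$ has mean $0$, variance $1$, and a square-integrable Malliavin derivative are exactly what place $F$ in $\mathbf{D}^{1,2}$ and legitimize each of these steps.
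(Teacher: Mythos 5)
Your argument is correct and is precisely the standard Stein--Malliavin proof that the paper invokes by citing \cite[Theorem 5.1.3]{NP-book} rather than proving the lemma itself: Stein equation, the bound $\Vert f_{h}^{\prime }\Vert _{\infty }\leqslant \sqrt{2/\pi }$ for $h\in Lip(1)$, and the integration-by-parts identity $E\left[ Fg(F)\right] =E\left[ g^{\prime }(F)\left\langle DF,-DL^{-1}F\right\rangle _{\mathcal{H}}\right] $. Your closing remark on regularizing $f_{h}$ to justify the chain rule is exactly the technical point handled in that reference, so nothing is missing.
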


By combining these two lemmas (the second one applies because variables with
finite chaos expansions are infinitely Malliavin-differentiable with finite
moments of all orders) and the proof of Theorem \ref{CLT}, by (\ref%
{hypothesis on Z+Y}) we immediately obtain the following upper bounds.

\begin{theorem}
\label{CLT for Z+Y}Under hypothesis (\ref{hypothesis on Z+Y}) and the
assumptions of Theorem \ref{CLT}, for some constant $C$ depending on the
relation in (\ref{hypothesis on Z+Y}),
\begin{equation*}
d_{W}\left( \frac{U_{f_{q},n}(Z+Y)}{\sqrt{E\left[ U_{f_{q},n}^{2}(Z)\right] }%
},N\right) \leqslant C\frac{n^{\frac{1}{2}-\gamma }}{\sqrt{E\left[
U_{f_{q},n}^{2}(Z)\right] }}+\frac{C_{q}\left( Z\right) \sqrt{2}}{\sqrt{\pi }%
}\sqrt{\sqrt{\kappa _{4}(F_{f_{2},n}(Z))}+\kappa _{4}(F_{f_{2},n}(Z))}.
\end{equation*}%
In addition, under Condition (\ref{BM}), i.e. if $\sum_{j\in \mathbb{Z}%
}\left\vert r_{Z}(j)\right\vert ^{2}<\infty $,
\begin{eqnarray}
d_{W}\left( \frac{U_{f_{q},n}(Z+Y)}{\sqrt{u_{f_{q}}(Z)}},N\right) &\leqslant
&C\frac{n^{\frac{1}{2}-\gamma }}{\sqrt{u_{f_{q}}(Z)}}+\frac{C_{q}\left(
Z\right) \sqrt{2}}{\sqrt{\pi }}\sqrt{\sqrt{\frac{\kappa _{4}(U_{f_{2},n}(Z))%
}{u_{f_{q}}(Z)^{2}}}+\frac{\kappa _{4}(U_{f_{2},n}(Z))}{u_{f_{q}}(Z)^{2}}}
\notag \\
&&+\sqrt{\frac{8}{\pi }}\left\vert 1-\frac{E\left[ U_{f_{q},n}^{2}(Z)\right]
}{u_{f_{q}}(Z)}\right\vert .  \label{d_{TV} for Z+Y with finite serie}
\end{eqnarray}
\end{theorem}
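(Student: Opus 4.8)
The plan is to isolate the non-stationary contribution as an additive stochastic remainder and to reduce everything else to the estimates already established for Theorem \ref{CLT}. Writing $R_n := \sqrt{n}\left(Q_{f_q,n}(Z+Y) - Q_{f_q,n}(Z)\right)$ and recalling that both $U_{f_q,n}(Z+Y)$ and $U_{f_q,n}(Z)$ are centered by the same constant $\lambda_{f_q}(Z)$, we have the exact decomposition $U_{f_q,n}(Z+Y) = U_{f_q,n}(Z) + R_n$. Dividing by $\sqrt{E[U_{f_q,n}^2(Z)]}$ and applying Lemma \ref{dWL1} with $Y \mapsto R_n/\sqrt{E[U_{f_q,n}^2(Z)]}$ and $Z \mapsto F_{f_q,n}(Z)$ gives
\begin{equation*}
d_W\left(\frac{U_{f_q,n}(Z+Y)}{\sqrt{E[U_{f_q,n}^2(Z)]}}, N\right) \leq d_W\left(F_{f_q,n}(Z), N\right) + \frac{\|R_n\|_{L^1(\Omega)}}{\sqrt{E[U_{f_q,n}^2(Z)]}}.
\end{equation*}
The remainder is controlled purely through its $L^1$ norm: for $p = p_0$, Jensen's inequality and hypothesis (\ref{hypothesis on Z+Y}) give $\|R_n\|_{L^1(\Omega)} \leq \sqrt{n}\,\|Q_{f_q,n}(Z+Y) - Q_{f_q,n}(Z)\|_{L^{p_0}(\Omega)} = \mathcal{O}(n^{1/2-\gamma})$, which is the first summand of the asserted bound.

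For the main term I would invoke the second Wasserstein--Malliavin lemma stated above: since $F_{f_q,n}(Z)$ is centered with unit variance and has a finite chaos expansion (hence lies in $\mathbf{D}^{1,2}$ with finite moments of all orders), one has $d_W(F_{f_q,n}(Z), N) \leq \sqrt{2/\pi}\,E\left|1 - \langle DF_{f_q,n}(Z), -DL^{-1}F_{f_q,n}(Z)\rangle_{\mathcal{H}}\right|$. The core computation in the proof of Theorem \ref{CLT} bounds exactly this expectation by $C_q(Z)\sqrt{\sqrt{\kappa_4(F_{f_2,n}(Z))} + \kappa_4(F_{f_2,n}(Z))}$, so multiplying by the Stein factor $\sqrt{2/\pi}$ produces the second summand and completes the first displayed estimate.

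For the second displayed estimate I would run the same decomposition but divide by the deterministic limit $\sqrt{u_{f_q}(Z)}$ in place of $\sqrt{E[U_{f_q,n}^2(Z)]}$, again peeling off $R_n$ with Lemma \ref{dWL1} to produce the term $Cn^{1/2-\gamma}/\sqrt{u_{f_q}(Z)}$. The leftover distance $d_W(U_{f_q,n}(Z)/\sqrt{u_{f_q}(Z)}, N)$ is handled precisely as in the proof of Corollary \ref{CLTCor}, point (2): one writes $U_{f_q,n}(Z)/\sqrt{u_{f_q}(Z)} = \sqrt{E[U_{f_q,n}^2(Z)]/u_{f_q}(Z)}\,F_{f_q,n}(Z)$, so that the Malliavin bracket for this variable equals $\frac{E[U_{f_q,n}^2(Z)]}{u_{f_q}(Z)}\langle DF_{f_q,n}(Z), -DL^{-1}F_{f_q,n}(Z)\rangle_{\mathcal{H}}$. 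Splitting $1$ minus this bracket into the deterministic part $1 - E[U_{f_q,n}^2(Z)]/u_{f_q}(Z)$ plus $E[U_{f_q,n}^2(Z)]/u_{f_q}(Z)$ times $1$ minus the bracket of $F_{f_q,n}(Z)$, and using the triangle inequality together with the fact that $E[U_{f_q,n}^2(Z)]/u_{f_q}(Z) \in (0,1)$ by (\ref{variance of Q_{f_q,n}(Z)}), bounds the distance by the variance-discrepancy term $\left|1 - E[U_{f_q,n}^2(Z)]/u_{f_q}(Z)\right|$ plus the fourth-cumulant term with its multiplicative prefactor at most $1$. Rewriting $\kappa_4(F_{f_2,n}(Z)) = \kappa_4(U_{f_2,n}(Z))/(E[U_{f_2,n}^2(Z)])^2$ in unnormalized form with $u_{f_q}(Z)^2$ in the denominator, exactly as in (\ref{CorPoint2b}), and attaching the Stein factor to each piece then yields (\ref{d_{TV} for Z+Y with finite serie}).

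The proof presents no serious obstacle, since all of the genuine analysis --- the estimate of $E|1 - \langle DF_{f_q,n}(Z), -DL^{-1}F_{f_q,n}(Z)\rangle_{\mathcal{H}}|$ by the fourth cumulant in the quadratic case --- is inherited verbatim from the proof of Theorem \ref{CLT}, and the genuinely new ingredients are only the clean $L^1$ control of $R_n$ afforded by hypothesis (\ref{hypothesis on Z+Y}) and the additivity of Lemma \ref{dWL1}. The sole point demanding care is the bookkeeping in the change of normalization from the parameter-dependent constant $\sqrt{E[U_{f_q,n}^2(Z)]}$ to the limit $\sqrt{u_{f_q}(Z)}$, which must correctly carry the ratio $E[U_{f_q,n}^2(Z)]/u_{f_q}(Z)$ through the Malliavin bracket so as to reproduce the variance-discrepancy term with its stated constant.
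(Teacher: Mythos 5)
Your proposal is correct and follows essentially the same route as the paper, which obtains the theorem precisely by combining Lemma \ref{dWL1} (to peel off the non-stationary remainder in $L^{1}$ via hypothesis (\ref{hypothesis on Z+Y})) with the Stein--Malliavin Wasserstein bound and the bracket estimate already established in the proof of Theorem \ref{CLT}, plus the normalization-change argument from Corollary \ref{CLTCor}. The paper states this in one sentence; your write-up simply supplies the bookkeeping the authors declare immediate, and your constants are at most the stated ones.
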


\subsection{Quadratic case\label{QC}}

In this subsection we assume that $q=2$. In this special case, consistent
with the notation in (\ref{f2}), without loss of generality up to
deterministic shifting and scaling, the only relevant polynomial of interest
is $f_{2}\left( x\right) =x^{2}$. Thus the question introduced in Section %
\ref{NBQ} is to estimate the variance $r_{Z}\left( 0\right) =\mathbf{E}\left[
\left( Z_{0}\right) ^{2}\right] $ where $Z$ is our stationary Gaussian
process. Using the notation introduced in that section, we thus have the
following expression for our normalized partial sum%
\begin{equation*}
U_{f_{2},n}(Z)=\frac{r_{Z}(0)}{\sqrt{n}}\sum_{k=0}^{n-1}H_{2}\left( \frac{%
Z_{k}}{\sqrt{r_{Z}(0)}}\right) =I_{2}\left( \frac{r_{Z}(0)}{\sqrt{n}}%
\sum_{k=0}^{n-1}\varepsilon _{k}^{\otimes 2}\right) ,
\end{equation*}%
where again $\varepsilon _{k}$ is defined by $Z_{k}~/\sqrt{r_{Z}(0)}%
=I_{1}\left( \varepsilon _{k}\right) $. Using the notation in Section \ref%
{AD}, the standardized version of $U_{f_{2},n}(Z)$ is thus
\begin{equation*}
F_{f_{2},n}\left( Z\right) =\frac{U_{f_{2},n}(Z)}{\sqrt{E\left[
U_{f_{2},n}^{2}(Z)\right] }}.
\end{equation*}%
Recall the 4th cumulant $\kappa _{4}\left( F_{f_{2},n}(Z)\right) =E\left[
F_{f_{2},n}(Z)^{4}\right] -3$, and define the third cumulant
\begin{equation*}
\kappa _{3}\left( F_{f_{2},n}(Z)\right) :=E\left[ F_{f_{2},n}(Z)^{3}\right] .
\end{equation*}%
We will apply the sharp asymptotics established in \cite{NP2013} (see bullet
points in Section \ref{Wiener}), by which a sequence of variance-one random
variables $F_{n}$ in a fixed Wiener chaos which converges in law to the
normal has total variation distance to the normal commensurate with the
maximum of its third and fourth cumulant. We will also apply an explicit
version of this theorem, due to \cite{NV2014}, tailored to quadratic
variations of stationary Gaussian processes. For positive-valued sequences $%
a $ and $b$, we will use the commensurability notation
\begin{equation*}
a_{n}\asymp b_{n}\iff 0<c:=\inf_{n}\frac{a_{n}}{b_{n}}\leqslant \sup_{n}%
\frac{a_{n}}{b_{n}}=:C<\infty
\end{equation*}%
where the extrema may be over all positive integers, or all integers
exceeding a value $n_{0}$. Our first result is the following.

\begin{proposition}
\label{CLT for quadratic var} (1) With $f_{2}\left( x\right) =x^{2}$, assume
that $\kappa _{4}(F_{f_{2},n}(Z))\longrightarrow 0$. Then
\begin{equation}
d_{TV}\left( F_{f_{2},n}(Z),N\right) \asymp \max \left\{ \kappa _{4}\left(
F_{f_{2},n}(Z)\right) ,\left\vert \kappa _{3}\left( F_{f_{2},n}(Z)\right)
\right\vert \right\} .  \label{optimal4}
\end{equation}

(2) If $r_{Z}$ is asymptotically of constant sign and monotone, then $\kappa
_{4}(F_{f_{2},n}(Z))\longrightarrow 0$ if and only if $\kappa
_{3}(F_{f_{2},n}(Z))\longrightarrow 0$, and in this case,%
\begin{equation}
d_{TV}\left( F_{f_{2},n}(Z),N\right) \asymp \left\vert \kappa _{3}\left(
F_{f_{2},n}(Z)\right) \right\vert =\left\vert E\left(
(F_{f_{2},n}(Z))^{3}\right) \right\vert ,  \label{optimal3}
\end{equation}%
and moreover,
\begin{equation*}
\left\vert E\left( (F_{f_{2},n}(Z))^{3}\right) \right\vert \asymp \frac{%
\left( \sum_{|k|<n}|r_{Z}(k)|^{3/2}\right) ^{2}}{\left(
\sum_{|k|<n}|r_{Z}(k)|^{2}\right) ^{3/2}\sqrt{n}}.
\end{equation*}
\end{proposition}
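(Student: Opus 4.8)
The plan is to exploit the fact that $U_{f_2,n}(Z)$ lives in the second Wiener chaos, so that $F_{f_2,n}(Z)$ is a standardized second-chaos sequence converging to $N$; the optimal fourth moment theorem quoted in Section \ref{Wiener} then applies directly. For part (1), I would simply invoke that theorem: since $F_{f_2,n}(Z)\in\mathcal{H}_2$ has mean $0$ and variance $1$, and $\kappa_4(F_{f_2,n}(Z))\to 0$ is equivalent to convergence in law to $N$ by the fourth moment theorem, there exist constants $0<c\leqslant C<\infty$ (depending only on the sequence) with
\begin{equation*}
c\max\{\kappa_4(F_{f_2,n}(Z)),|\kappa_3(F_{f_2,n}(Z))|\}\leqslant d_{TV}(F_{f_2,n}(Z),N)\leqslant C\max\{\kappa_4(F_{f_2,n}(Z)),|\kappa_3(F_{f_2,n}(Z))|\},
\end{equation*}
which is exactly \eqref{optimal4} after translating the two-sided bound into the $\asymp$ notation (noting that here $\kappa_4\geqslant 0$ on the second chaos, so the centered fourth moment $\mathbf{E}[X^4]-3$ is nonnegative and equals $\kappa_4$).

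For part (2), the first task is to show that under the sign/monotonicity hypothesis on $r_Z$, the conditions $\kappa_4\to 0$ and $\kappa_3\to 0$ are equivalent, and moreover that $\kappa_4$ is asymptotically dominated by $|\kappa_3|$, so that the max in \eqref{optimal4} collapses to $|\kappa_3|$ and yields \eqref{optimal3}. The natural route is to borrow the explicit second-chaos estimates from \cite{NV2014} (already used in the proof of Corollary \ref{CLTCor} via \cite[Proposition 1]{NV2014}): for a standardized quadratic variation one has the contraction formulas expressing $\kappa_3$ and $\kappa_4$ as normalized sums of triple and quadruple products of the $r_Z(k)$'s. The key comparison is that, when $r_Z$ is eventually of constant sign and monotone, these nested sums are all commensurate with powers of $\sum_{|k|<n}|r_Z(k)|^{3/2}$ and $\sum_{|k|<n}|r_Z(k)|^2$; in particular $\kappa_4 = o(|\kappa_3|)$ because the cyclic quadruple sum defining $\kappa_4$ decays faster than the square of the triple sum defining $\kappa_3$. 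This is precisely where the monotone-sign assumption does its work: it lets one convert the abstract tensor-contraction norms into ordered sums that can be compared term by term.

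The final task is the explicit commensurability
\begin{equation*}
|\mathbf{E}((F_{f_2,n}(Z))^3)|\asymp\frac{\left(\sum_{|k|<n}|r_Z(k)|^{3/2}\right)^2}{\left(\sum_{|k|<n}|r_Z(k)|^2\right)^{3/2}\sqrt{n}}.
\end{equation*}
I would obtain this by writing $U_{f_2,n}(Z)=I_2(h_n)$ with $h_n=\frac{r_Z(0)}{\sqrt n}\sum_k \varepsilon_k^{\otimes 2}$, computing the third cumulant of a second-chaos element via the standard trace formula $\kappa_3(I_2(h_n))\propto \langle h_n\otimes_1 h_n, h_n\rangle$, and expanding this inner product as a triple sum $\sum_{i,j,\ell} r_Z(i-j)r_Z(j-\ell)r_Z(\ell-i)/r_Z(0)^3$. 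After normalizing by $(\mathbf{E}[U_{f_2,n}^2(Z)])^{3/2}\asymp(\sum_{|k|<n}r_Z(k)^2)^{3/2}$ (using $\mathbf{E}[U_{f_2,n}^2(Z)]\asymp \sum_{|k|<n}r_Z(k)^2$ from \eqref{variance of Q_{f_q,n}(Z)}), the numerator triple sum must be shown to be commensurate with $\left(\sum_{|k|<n}|r_Z(k)|^{3/2}\right)^2/\sqrt n$. The main obstacle is exactly this last step: bounding the cyclic triple convolution sum $\sum r_Z(i-j)r_Z(j-\ell)r_Z(\ell-i)$ both above and below by $\sqrt n\left(\sum|r_Z(k)|^{3/2}\right)^2$. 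The upper bound follows from a Young/Hölder-type inequality on the convolution, but the matching lower bound genuinely requires the eventual constant-sign and monotonicity hypothesis to prevent cancellation; I expect this to be the delicate part, and I would either cite the corresponding lower-bound lemma of \cite{NV2014} or reprove it by restricting the triple sum to the diagonal block where all three indices are within $\sqrt n$ of one another, where constancy of sign guarantees no cancellation and the terms are genuinely of order $(\sum|r_Z(k)|^{3/2})^2/\sqrt n$.
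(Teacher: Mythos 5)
Your proposal is correct and follows essentially the same route as the paper: the paper's proof of part (1) is a direct citation of the optimal fourth moment theorem of \cite{NP2013} (and \cite{BBNP}), and part (2) is a direct citation of \cite[Theorem 3]{NV2014}, exactly the two ingredients you identify. The only difference is that where the paper simply cites \cite{NV2014}, you additionally sketch how that result is proved (cyclic triple sum for $\kappa_3$, Young/H\"older upper bound, and the monotone constant-sign hypothesis for the matching lower bound), which is consistent with the cited reference but not required here.
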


\begin{proof}
The result (\ref{optimal4}) in Point (1) is a direct consequence of the main
result in \cite{NP2013} (see also \cite{BBNP}). The statements in point (2)
come directly from \cite[Theorem 3]{NV2014}.
\end{proof}

The methods used to prove Corollary \ref{CLTCor} and Theorem \ref{CLT for
Z+Y} immediately lead from the upper bound statements in Proposition \ref%
{CLT for quadratic var} to the following corollary.

\begin{corollary}
\label{cor CLT for quadratic var}If the hypothesis (\ref{hypothesis on Z+Y})
holds, under the assumptions in part (2) of Proposition \ref{CLT for
quadratic var}, for some constant $C>0$,%
\begin{equation*}
d_{W}\left( \frac{U_{f_{2},n}(Z+Y)}{\sqrt{E\left[ U_{f_{2},n}^{2}(Z)\right] }%
},N\right) \leqslant C\left( \frac{n^{\frac{1}{2}-\gamma }}{\sqrt{E\left[
U_{f_{2},n}^{2}(Z)\right] }}+\left\vert E\left( (F_{f_{2},n}(Z))^{3}\right)
\right\vert \right) .
\end{equation*}%
In addition, if Condition (\ref{BM}) holds, i.e. $\sum_{j\in \mathbb{Z}%
}\left\vert r_{Z}(j)\right\vert ^{2}<\infty $, then%
\begin{eqnarray}
&&d_{W}\left( \frac{U_{f_{2},n}(Z+Y)}{\sqrt{u_{f_{2}}(Z)}},N\right)  \notag
\\
&\leqslant &C\left( \frac{n^{\frac{1}{2}-\gamma }}{\sqrt{u_{f_{2}}(Z)}}%
+\left\vert E\left( (F_{f_{2},n}(Z))^{3}\right) \right\vert \right) +C\frac{%
\sum_{\left\vert j\right\vert >n}\left\vert r_{Z}(j)\right\vert ^{2}}{%
u_{f_{2}}(Z)}.  \label{d_{TV} for Z+Y finite serie and q=2}
\end{eqnarray}
\end{corollary}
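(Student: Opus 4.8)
The plan is to combine the sharp stationary estimate of Proposition \ref{CLT for quadratic var}(2) with the perturbation device of Lemma \ref{dWL1}, exactly in the spirit of the proofs of Corollary \ref{CLTCor} and Theorem \ref{CLT for Z+Y}; the only genuinely new feature is that the discrepancy $U_{f_{2},n}(Z+Y)-U_{f_{2},n}(Z)$ is now random, so it must be absorbed in $L^{1}$ rather than handled through densities. First I would record the identity $U_{f_{2},n}(Z+Y)-U_{f_{2},n}(Z)=\sqrt{n}\,(Q_{f_{2},n}(Z+Y)-Q_{f_{2},n}(Z))$, which follows from definition (\ref{defUZ}) because both renormalizations are centered by the \emph{same} stationary constant $\lambda_{f_{2}}(Z)$, so the centerings cancel. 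By hypothesis (\ref{hypothesis on Z+Y}) and Jensen's inequality ($\Vert\cdot\Vert_{L^{1}}\leq\Vert\cdot\Vert_{L^{p}}$), this gives at once $\Vert U_{f_{2},n}(Z+Y)-U_{f_{2},n}(Z)\Vert_{L^{1}(\Omega)}\leq Cn^{1/2-\gamma}$.

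For the first inequality I would write $U_{f_{2},n}(Z+Y)/\sqrt{E[U_{f_{2},n}^{2}(Z)]}=F_{f_{2},n}(Z)+(U_{f_{2},n}(Z+Y)-U_{f_{2},n}(Z))/\sqrt{E[U_{f_{2},n}^{2}(Z)]}$ and apply Lemma \ref{dWL1} with the second summand as the perturbation. Its $L^{1}$ contribution is $\leq Cn^{1/2-\gamma}/\sqrt{E[U_{f_{2},n}^{2}(Z)]}$ by the previous paragraph. For the main term, since $F_{f_{2},n}(Z)$ lies in the second Wiener chaos and has unit variance, the Malliavin--Stein Wasserstein bound stated just before this corollary gives $d_{W}(F_{f_{2},n}(Z),N)\leq\sqrt{2/\pi}\,E|1-\langle DF_{f_{2},n}(Z),-DL^{-1}F_{f_{2},n}(Z)\rangle|$; this Malliavin--Stein quantity is the very one that controls $d_{TV}$ in Proposition \ref{CLT for quadratic var}, hence is commensurate with $\max\{\kappa_{4}(F_{f_{2},n}(Z)),|\kappa_{3}(F_{f_{2},n}(Z))|\}$, and under the sign/monotonicity assumption of part (2) one has $\kappa_{4}\lesssim|\kappa_{3}|$, so $d_{W}(F_{f_{2},n}(Z),N)\leq C|E((F_{f_{2},n}(Z))^{3})|$. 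Adding the two contributions yields the first displayed bound.

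For the second inequality I would change normalization from the data-dependent $\sqrt{E[U_{f_{2},n}^{2}(Z)]}$ to the constant $\sqrt{u_{f_{2}}(Z)}$, reproducing the mechanism of Corollary \ref{CLTCor}(2). Writing $v_{n}:=E[U_{f_{2},n}^{2}(Z)]$ and $u:=u_{f_{2}}(Z)$, Lemma \ref{dWL1} reduces the problem to $d_{W}(U_{f_{2},n}(Z)/\sqrt{u},N)$ plus the same $L^{1}$ term now divided by $\sqrt{u}$. Since $U_{f_{2},n}(Z)/\sqrt{u}=\sqrt{v_{n}/u}\,F_{f_{2},n}(Z)$ with $v_{n}\leq u$ (established inside the proof of Corollary \ref{CLTCor}), a triangle inequality splits this into $\sqrt{v_{n}/u}\,d_{W}(F_{f_{2},n}(Z),N)\leq C|E((F_{f_{2},n}(Z))^{3})|$ and the Gaussian--Gaussian distance $d_{W}(\sqrt{v_{n}/u}\,N,N)=\sqrt{2/\pi}\,|1-\sqrt{v_{n}/u}|\leq\sqrt{2/\pi}\,|1-v_{n}/u|$. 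It then remains to re-express the variance discrepancy through the correlation tail: from the variance formula (\ref{variance of Q_{f_q,n}(Z)}) together with Lemma \ref{uZlemma} one obtains $u-v_{n}=4\sum_{j\geq n}r_{Z}(j)^{2}+\frac{4}{n}\sum_{j=1}^{n-1}j\,r_{Z}(j)^{2}$, and the $[\varepsilon n]$-splitting used to prove Lemma \ref{uZlemma} bounds this by the tail $\sum_{|j|>n}r_{Z}(j)^{2}$, producing the term $C\sum_{|j|>n}|r_{Z}(j)|^{2}/u_{f_{2}}(Z)$.

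I expect the main obstacle to be precisely this last step, the control of $|1-v_{n}/u|$ by the clean tail $\sum_{|j|>n}r_{Z}(j)^{2}/u$. The auxiliary contribution $\frac{1}{n}\sum_{j<n}j\,r_{Z}(j)^{2}$ is not dominated term-by-term by the tail for arbitrarily fast decay, so one must either invoke the monotone, constant-sign, power-law regime already in force in Proposition \ref{CLT for quadratic var}(2) (via summation by parts it is then of the same order as the tail), or simply observe that $\frac{1}{n}\sum_{j<n}j\,r_{Z}(j)^{2}$ is in any case dominated by the third-cumulant term $|E((F_{f_{2},n}(Z))^{3})|\asymp(\sum_{|k|<n}|r_{Z}(k)|^{3/2})^{2}/((\sum_{|k|<n}|r_{Z}(k)|^{2})^{3/2}\sqrt{n})$ and so is harmless in the final estimate. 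The remaining conceptual point worth emphasizing is why the passage to the Wasserstein distance is forced: the perturbation $U_{f_{2},n}(Z+Y)-U_{f_{2},n}(Z)$ is a genuine random variable with no available density control, so the total-variation conclusion of Proposition \ref{CLT for quadratic var} cannot be transported directly, whereas $d_{W}$ tolerates additive $L^{1}$ perturbations through Lemma \ref{dWL1}.
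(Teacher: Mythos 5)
Your overall architecture (the identity $U_{f_{2},n}(Z+Y)-U_{f_{2},n}(Z)=\sqrt{n}\,(Q_{f_{2},n}(Z+Y)-Q_{f_{2},n}(Z))$, absorption of the random perturbation in $L^{1}$ via Lemma \ref{dWL1}, the change of normalization from $\sqrt{E[U_{f_{2},n}^{2}(Z)]}$ to $\sqrt{u_{f_{2}}(Z)}$ as in Corollary \ref{CLTCor}, and the tail expression for the variance discrepancy) is exactly the route the paper intends when it says the corollary follows from "the methods used to prove Corollary \ref{CLTCor} and Theorem \ref{CLT for Z+Y}." Your closing remarks about why $d_{W}$ is forced, and about the term $\frac{1}{n}\sum_{j<n}j\,r_{Z}(j)^{2}$ not being dominated term-by-term by the tail, are fair points that the paper glosses over.

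However, there is a genuine gap in the central step, namely your derivation of $d_{W}(F_{f_{2},n}(Z),N)\leqslant C\,|\kappa_{3}(F_{f_{2},n}(Z))|$. You invoke the Malliavin--Stein bound $d_{W}(F,N)\leqslant\sqrt{2/\pi}\,E|1-\langle DF,-DL^{-1}F\rangle_{\mathcal{H}}|$ and then assert that this quantity "is the very one that controls $d_{TV}$ in Proposition \ref{CLT for quadratic var}, hence is commensurate with $\max\{\kappa_{4},|\kappa_{3}|\}$." That assertion is false, and it is precisely the point of the \emph{optimal} fourth moment theorem. For $F$ in the second chaos one only has $E|1-\langle DF,-DL^{-1}F\rangle_{\mathcal{H}}|\leqslant\sqrt{\mathrm{Var}\bigl(\frac{1}{2}\Vert DF\Vert_{\mathcal{H}}^{2}\bigr)}\asymp\sqrt{\kappa_{4}(F)}$, and in general $\sqrt{\kappa_{4}}$ is strictly larger than $\max\{\kappa_{4},|\kappa_{3}|\}$. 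Concretely, for a correlation of fractional-Gaussian-noise type with $H\in(\tfrac{2}{3},\tfrac{3}{4})$ --- a case squarely covered by part (2) of the Proposition --- one has $\sqrt{\kappa_{4}(F_{f_{2},n}(Z))}\asymp n^{4H-3}$ while $|\kappa_{3}(F_{f_{2},n}(Z))|\asymp n^{6H-9/2}$, and $n^{4H-3}\gg n^{6H-9/2}$; so your chain of inequalities yields only the non-optimal rate $n^{4H-3}$ and cannot produce the bound $C|E((F_{f_{2},n}(Z))^{3})|$ stated in the corollary. The two-sided bound (\ref{optimal4})--(\ref{optimal3}) is \emph{not} proved by estimating the Malliavin--Stein quantity; it rests on the refined analysis of \cite{NP2013} and \cite{NV2014}, whose statements hold for the Wasserstein distance as well as for total variation (the paper's Lemma \ref{LBdWlemma} addresses only the lower bound, but the upper bound in $d_{W}$ is part of the cited results). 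The correct repair is therefore to import $d_{W}(F_{f_{2},n}(Z),N)\leqslant C\max\{\kappa_{4}(F_{f_{2},n}(Z)),|\kappa_{3}(F_{f_{2},n}(Z))|\}\leqslant C'|\kappa_{3}(F_{f_{2},n}(Z))|$ directly from those references, rather than route it through $E|1-\langle DF,-DL^{-1}F\rangle_{\mathcal{H}}|$. With that substitution the rest of your argument goes through.
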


Unfortunately, these techniques say nothing about how to obtain lower bounds
when one adds discrepancies corresponding to the speed of convergence of the
series $\sum_{j}\left\vert r_{Z}(j)\right\vert ^{2}$, and to a
non-stationary term. We now investigate some slight strengthening of
Conditions (\ref{BM}) and (\ref{hypothesis on Z+Y}) which allow for such
lower-bound statements, starting with some elementary considerations.

From (\ref{optimal3}) and the remainder of Point (2) in Proposition \ref{CLT
for quadratic var}, there exists a constant $c_{1}\left( Z\right) $
depending only on the law of $Z$ such that
\begin{equation}
c_{1}\left( Z\right) \frac{\left( \sum_{|k|<n}|r_{Z}(k)|^{3/2}\right) ^{2}}{%
\left( \sum_{|k|<n}|r_{Z}(k)|^{2}\right) ^{3/2}\sqrt{n}}\leqslant
d_{TV}\left( F_{f_{2},n}(Z),N\right) \leqslant C_{1}\left( Z\right) \frac{%
\left( \sum_{|k|<n}|r_{Z}(k)|^{3/2}\right) ^{2}}{\left(
\sum_{|k|<n}|r_{Z}(k)|^{2}\right) ^{3/2}\sqrt{n}}.  \label{c1}
\end{equation}%
Now assume merely that (\ref{BM}) holds: $\sum_{j}\left\vert
r_{Z}(j)\right\vert ^{2}$ converges. Thus, for some constant $c_{2}^{\prime
}\left( Z\right) $ depending only on the law of $Z$,
\begin{equation}
d_{TV}\left( F_{f_{2},n}(Z),N\right) \geq \frac{c_{2}^{\prime }\left(
Z\right) }{\sqrt{n}}.  \label{27LB}
\end{equation}

In cases where $\sum |r_{Z}(k)|^{3/2}$ diverges, we evidently get a larger
lower bound than (\ref{27LB}), which would make the rest of the analysis
easier. To keep track of multiplicative constants as best we can, we define%
\begin{equation}
L\left( Z\right) :=\lim_{n\rightarrow \infty }\frac{\left(
\sum_{|k|<n}|r_{Z}(k)|^{3/2}\right) ^{2}}{\left(
\sum_{|k|<n}|r_{Z}(k)|^{2}\right) ^{3/2}},  \label{Ldef}
\end{equation}%
which exists and is positive under condition (\ref{BM}), with the
understanding that when $L\left( Z\right) $ is $+\infty $, one can and
should replace it by an arbitrarily large constant for $n$ large enough. We
will not comment on the case of diverging $L$ further. The interested reader
can work out for herself how much better the final lower bound results would
be in this case.

Thus in (\ref{27LB}), we may take $c_{2}^{\prime }\left( Z\right)
=c_{1}\left( Z\right) L\left( Z\right) $ where $c_{1}\left( Z\right) $ is
the lower bound constant from (\ref{optimal3}), i.e. as defined in (\ref{c1}%
). Finally, we relate (\ref{27LB}) to the Wasserstein distance through the
following lemma, proved in the appendix.

\begin{lemma}
\label{LBdWlemma}Lower bound statements in Proposition \ref{CLT for
quadratic var} hold for $d_{W}$ with an additional factor $2$.
\end{lemma}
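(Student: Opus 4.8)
The plan is to reduce the statement to a single lower bound on $d_W$ in terms of the third cumulant, and then transfer it. By part (2) of Proposition \ref{CLT for quadratic var} (which is in force here, the monotonicity hypothesis being assumed), each lower bound to be transferred is commensurate with $|\kappa_3(F_{f_2,n}(Z))| = |E[(F_{f_2,n}(Z))^3]|$, and by part (1) more generally with $\max\{\kappa_4(F_{f_2,n}(Z)),|\kappa_3(F_{f_2,n}(Z))|\}$. Write $F_n:=F_{f_2,n}(Z)$; this is an element of the second Wiener chaos with $E[F_n]=0$ and $E[F_n^2]=1$, so by hypercontractivity (\ref{hypercontractivity}) all its moments are bounded uniformly in $n$. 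It therefore suffices to bound $d_W(F_n,N)$ below by a constant (depending only on the law of $Z$) times $\max\{\kappa_4(F_n),|\kappa_3(F_n)|\}$, and to check that the leading constant so obtained is at least half of the corresponding total-variation lower-bound constant.

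First I would bound $d_W$ from below by feeding a single explicit $1$-Lipschitz test function into the defining supremum. Take $h$ with $h'(t)=\mathrm{sgn}(t^2-1)$ (piecewise linear, hence $1$-Lipschitz); then $d_W(F_n,N)\geq |E[h(F_n)]-E[h(N)]|$. Using the Gaussian (Edgeworth) correction to the law of $F_n$, whose density differs from the standard Gaussian density $p_N$ by the leading term $-\tfrac16\kappa_3(F_n)H_3\,p_N$, and integrating by parts via $H_3\,p_N=-(H_2\,p_N)'$, one gets $E[h(F_n)]-E[h(N)]=-\tfrac16\kappa_3(F_n)\,E|H_2(N)|+R_n$, since $\int h'\,H_2\,p_N=\int \mathrm{sgn}(t^2-1)(t^2-1)p_N=E|H_2(N)|$. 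The same density expansion gives $d_{TV}(F_n,N)=\tfrac12\int|p_{F_n}-p_N|=\tfrac{1}{12}|\kappa_3(F_n)|\,E|H_3(N)|+o(|\kappa_3(F_n)|)$. Because $2E|H_2(N)|/E|H_3(N)|>1$, the leading Wasserstein constant $\tfrac16 E|H_2(N)|$ exceeds the leading total-variation constant $\tfrac{1}{12}E|H_3(N)|$, so the lower bound transfers; the advertised factor $2$ is generous slack that also absorbs the remainder $R_n$.

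The hard part will be controlling $R_n$ uniformly in $n$, that is, guaranteeing that the $\kappa_3$ term genuinely dominates and is not swamped by fourth- and higher-order cumulant contributions (nor by the tails of $h$, which is where the uniform moment bounds from hypercontractivity are needed). This is exactly where the monotonicity hypothesis of part (2) of Proposition \ref{CLT for quadratic var} enters: together with the refined asymptotics borrowed from \cite{NV2014} (as already used in the proof of Corollary \ref{CLTCor}), it forces $\kappa_4(F_n)=o(|\kappa_3(F_n)|)$ for autocorrelations decaying no slower than those of fractional Gaussian noise with $H<2/3$, so that $R_n=o(|\kappa_3(F_n)|)$ and the leading term survives. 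For the more general part (1), where no sign or monotonicity is assumed and $\kappa_4$ may instead dominate, I would run the identical argument with a $1$-Lipschitz test function detecting the fourth cumulant (replacing $H_3$ by $H_4$) and invoke the optimal estimates of \cite{NP2013} directly to keep the remainder under control; in every case the loss incurred in passing from $d_{TV}$ to $d_W$ stays within the universal factor $2$.
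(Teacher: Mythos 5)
Your proposal misses the short argument that makes this lemma a two-line observation, and in its place opens a technical front that it does not close. The paper's proof simply inspects the proof of the lower bound in \cite{NP2013}: the quantity actually bounded below there by $c\max \left\{ \kappa _{4}(F_{n}),\left\vert \kappa _{3}(F_{n})\right\vert \right\} $ is $\frac{1}{2}\max \left\{ \left\vert E(\cos F_{n})-E(\cos N)\right\vert ;\left\vert E(\sin F_{n})-E(\sin N)\right\vert \right\} $, and since $\cos $ and $\sin $ are $1$-Lipschitz as well as bounded by $1$, this same quantity is at most $\frac{1}{2}d_{W}(F_{n},N)$; the transfer to $d_{W}$ and the factor $2$ come for free, with exactly the same constant as in the total-variation statement. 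By contrast, you propose to re-derive a lower bound from scratch by an Edgeworth expansion of the density of $F_{n}$ tested against a hand-picked $1$-Lipschitz function. The decisive step there --- that the density of the second-chaos variable $F_{n}$ equals the standard Gaussian density times $1-\frac{1}{6}\kappa _{3}(F_{n})H_{3}$ plus a remainder which, integrated against an unbounded Lipschitz function, is $o(\left\vert \kappa _{3}(F_{n})\right\vert )$ uniformly in $n$ --- is precisely the hard content of \cite{NP2013} and \cite{BBNP}, and you assert it rather than prove it. Hypercontractivity gives uniform moment bounds but does not validate a density expansion with controlled remainder, and the monotonicity hypothesis on $r_{Z}$ controls the ratio of $\kappa _{4}$ to $\kappa _{3}$ (via \cite{NV2014}), not the size of the Edgeworth remainder. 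So as written the central step is a genuine gap.

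A secondary problem concerns the constants. Even if your expansion were justified, it would produce a lower bound of the form (an explicit Hermite-moment constant) times $\left\vert \kappa _{3}(F_{n})\right\vert $, which bears no controlled relation to the constant $c_{1}(Z)$ of (\ref{c1}); the lemma is used downstream in Theorem \ref{OptimalTheorem} with the specific value $c_{2}=2c_{1}(Z)L(Z)$, i.e.\ exactly twice the total-variation lower-bound constant, and your comparison of a ``leading Wasserstein constant'' with a ``leading total-variation constant'' compares two formal Edgeworth coefficients rather than the actual constants delivered by \cite{NP2013}. The clean route is to reuse the very test functions appearing in the cited proof, which happen to be simultaneously bounded and Lipschitz.
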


Thus, under Condition (\ref{BM}), by the previous development and Lemma \ref%
{LBdWlemma}, with%
\begin{equation}
c_{2}\left( Z\right) :=2c_{1}\left( Z\right) L\left( Z\right) ,  \label{c2}
\end{equation}%
we finally get%
\begin{equation}
d_{W}\left( F_{f_{2},n}(Z),N\right) \geq \frac{c_{2}\left( Z\right) }{\sqrt{n%
}},  \label{dWc2}
\end{equation}%
and we are ready to state and prove our lower bound theorem in the quadratic
case under a sharpening of condition (\ref{hypothesis on Z+Y}) and a
quantitative version of Condition (\ref{BM}).

\begin{theorem}
\label{OptimalTheorem}Assume the following two conditions.

\begin{itemize}
\item Let $\left( Y_{k}\right) _{k\in \mathbb{Z}}$ be a process such that
for all $n\in \mathbb{N}$, for some finite constant $c_{3}>0$,%
\begin{equation}
\left\Vert Q_{f_{2},n}(Z+Y)-Q_{f_{2},n}(Z)\right\Vert _{L^{1}(\Omega
)}\leqslant \frac{c_{3}\sqrt{u_{f_{2}}\left( Z\right) }}{n}.  \label{23NEW}
\end{equation}

\item Condition (\ref{BM}) holds and for some finite constant $c_{4}>0$%
\begin{equation}
\left\vert u_{f_{2}}\left( Z\right) -E\left[ U_{f_{2},n}^{2}(Z)\right]
\right\vert \leqslant \frac{2~c_{4}~u_{f_{2}}\left( Z\right) }{\sqrt{n}}.
\label{youyou}
\end{equation}
\end{itemize}

With the positive constants $c_{1}\left( Z\right) ,$ $C_{1}\left( Z\right) $%
, and $L\left( Z\right) $ defined via (\ref{c1}), (\ref{Ldef}), and $%
c_{2}=2c_{1}\left( Z\right) L\left( Z\right) $ (\ref{c2}), which exist by
Proposition \ref{CLT for quadratic var}, if $c_{4}<c_{2}-c_{3}$, then for
any $\varepsilon >0$ such that $c_{2}-\left( 1+\varepsilon \right) \left(
c_{3}+c_{4}\right) >0$, there exists $n_{0}$ large enough that for all $%
n>n_{0}$,%
\begin{equation*}
\frac{c_{1}\left( Z\right) L\left( Z\right) -\left( 1+\varepsilon \right)
\left( c_{3}+c_{4}\right) }{\sqrt{n}}\leqslant d_{W}\left( \frac{%
U_{f_{2},n}\left( Y+Z\right) }{\sqrt{u_{f_{2}}\left( Z\right) }},N\right)
\leqslant \frac{C_{1}\left( Z\right) L\left( Z\right) +c_{3}+c_{4}}{\sqrt{n}}%
.
\end{equation*}
\end{theorem}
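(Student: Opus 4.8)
The plan is to treat $U_{f_{2},n}(Y+Z)/\sqrt{u_{f_{2}}(Z)}$ as a perturbation — by a small $L^{1}$ term and by a small rescaling — of the standardized stationary variable $F_{f_{2},n}(Z)$, and then to transport both the sharp upper bound and the sharp lower bound already available for $F_{f_{2},n}(Z)$ across these two perturbations using the Wasserstein triangle inequality of Lemma \ref{dWL1}. First I would record the additive decomposition
$$U_{f_{2},n}(Y+Z)=U_{f_{2},n}(Z)+R_{n},\qquad R_{n}:=\sqrt{n}\left( Q_{f_{2},n}(Z+Y)-Q_{f_{2},n}(Z)\right),$$
so that hypothesis (\ref{23NEW}) gives $\left\Vert R_{n}/\sqrt{u_{f_{2}}(Z)}\right\Vert _{L^{1}(\Omega )}=\sqrt{n}\left\Vert Q_{f_{2},n}(Z+Y)-Q_{f_{2},n}(Z)\right\Vert _{L^{1}}/\sqrt{u_{f_{2}}(Z)}\leq c_{3}/\sqrt{n}$. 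Writing $\sigma _{n}^{2}:=E[U_{f_{2},n}^{2}(Z)]$ and $\sigma ^{2}:=u_{f_{2}}(Z)$, I would also note that $U_{f_{2},n}(Z)/\sigma =(\sigma _{n}/\sigma )F_{f_{2},n}(Z)$ with $\sigma _{n}\leq \sigma $, and that $\left\vert \sigma _{n}/\sigma -1\right\vert =\left\vert \sigma _{n}^{2}-\sigma ^{2}\right\vert /\left( \sigma (\sigma _{n}+\sigma )\right)$; since $\sigma _{n}\rightarrow \sigma $, for $n\geq n_{0}$ one has $\sigma (\sigma _{n}+\sigma )\geq 2\sigma ^{2}/(1+\varepsilon )$, so hypothesis (\ref{youyou}) yields $\left\vert \sigma _{n}/\sigma -1\right\vert \leq (1+\varepsilon )c_{4}/\sqrt{n}$. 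This is precisely why (\ref{youyou}) carries the factor $2$: the $\sigma _{n}+\sigma \approx 2\sigma $ cancellation turns $2c_{4}$ into $c_{4}$.

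For the upper bound I would run the two comparisons forward: Lemma \ref{dWL1} with $B=R_{n}/\sqrt{u_{f_{2}}(Z)}$ peels off the non-stationarity at cost $c_{3}/\sqrt{n}$; the scaling estimate $d_{W}(aF,N)\leq d_{W}(F,N)+|a-1|\,E|F|\leq d_{W}(F,N)+|a-1|$ with $a=\sigma _{n}/\sigma $ (using $E|F_{f_{2},n}(Z)|\leq 1$) passes to the standardized variable at cost $(1+\varepsilon )c_{4}/\sqrt{n}$; and the remaining term $d_{W}(F_{f_{2},n}(Z),N)$ is controlled by the sharp Wasserstein upper bound of order $\left\vert E(F_{f_{2},n}(Z)^{3})\right\vert $ — the same estimate that underlies Corollary \ref{cor CLT for quadratic var} — which by (\ref{c1}) and the definition (\ref{Ldef}) of $L(Z)$ is at most $C_{1}(Z)L(Z)/\sqrt{n}$ for $n$ large. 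Summing the three contributions gives the right-hand inequality. I would emphasize that this step \emph{must} invoke the sharp third-cumulant estimate of Proposition \ref{CLT for quadratic var} rather than the generic bound of Theorem \ref{CLT}: the latter yields only a rate of order $n^{-1/4}$, whereas the optimal bound gives $n^{-1/2}$, which is what matches the lower bound.

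For the lower bound I would run the same two comparisons in reverse. Rewriting Lemma \ref{dWL1} as $d_{W}(A+B,N)\geq d_{W}(A,N)-\left\Vert B\right\Vert _{L^{1}}$ and combining it with $d_{W}(aF,N)\geq d_{W}(F,N)-\left\vert a-1\right\vert $, I obtain
$$d_{W}\left( \frac{U_{f_{2},n}(Y+Z)}{\sqrt{u_{f_{2}}(Z)}},N\right) \geq d_{W}\left( F_{f_{2},n}(Z),N\right) -\frac{(1+\varepsilon )c_{4}}{\sqrt{n}}-\frac{c_{3}}{\sqrt{n}}.$$
The main term is then bounded below by the Wasserstein lower bound (\ref{dWc2}), itself obtained by feeding the optimal total-variation lower bound of Proposition \ref{CLT for quadratic var} through Lemma \ref{LBdWlemma} and the limit (\ref{Ldef}). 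Substituting this and collecting constants produces the left-hand inequality in the stated form $\left( c_{1}(Z)L(Z)-(1+\varepsilon )(c_{3}+c_{4})\right)/\sqrt{n}$, where the hypothesis $c_{4}<c_{2}-c_{3}$ and the admissible choice of $\varepsilon $ (so that $c_{2}-(1+\varepsilon )(c_{3}+c_{4})>0$) are exactly what guarantee the numerator is positive for all $n>n_{0}$.

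The delicate point, which I expect to absorb most of the effort, is the constant bookkeeping in the lower bound. One must (i) convert the total-variation optimal lower bound into a genuine Wasserstein lower bound through Lemma \ref{LBdWlemma} without losing more than the budgeted factor of $2$, (ii) replace the finite-$n$ ratio $\big(\sum_{|k|<n}|r_{Z}(k)|^{3/2}\big)^{2}/\big(\sum_{|k|<n}|r_{Z}(k)|^{2}\big)^{3/2}$ by its limit $L(Z)$ only once $n$ exceeds a threshold, and (iii) keep the two subtracted corrections, coming from the non-stationary term $R_{n}$ and from the mismatch between the empirical normalization $\sqrt{E[U_{f_{2},n}^{2}(Z)]}$ and the parameter-based normalization $\sqrt{u_{f_{2}}(Z)}$, strictly below the sharp leading term uniformly in $n\geq n_{0}$. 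The upper bound, by contrast, is essentially the forward version of the same triangle inequalities together with the optimal Stein--Malliavin estimate, and requires no comparable care.
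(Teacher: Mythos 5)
Your proposal is correct and follows essentially the same route as the paper: peel off the non-stationary term with Lemma \ref{dWL1}, handle the change of normalization with the scaling variant $d_{W}(aF,N)\leqslant d_{W}(bF,N)+|a-b|\,E|F|$, and transport the two-sided third-cumulant bounds of Proposition \ref{CLT for quadratic var} to $d_{W}$ via Lemma \ref{LBdWlemma}, with the same constant bookkeeping ($c_{3}$ from (\ref{23NEW}), $(1+\varepsilon)c_{4}$ from (\ref{youyou})). The only cosmetic difference is that you apply the rescaling step to the standardized stationary variable $F_{f_{2},n}(Z)$ (using $E|F_{f_{2},n}(Z)|\leqslant 1$), whereas the paper rescales $U_{f_{2},n}(Y+Z)$ and then bounds $E|U_{f_{2},n}(Y+Z)|$ separately; both yield the stated constants.
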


\begin{proof}
By using Lemma \ref{dWL1}, the lower bound (\ref{dWc2}) implies%
\begin{equation*}
\frac{c_{2}}{\sqrt{n}}\leqslant d_{W}\left( \frac{U_{f_{2},n}\left(
Y+Z\right) }{\sqrt{E\left[ U_{f_{2},n}\left( Z\right) ^{2}\right] }}%
,N\right) +\frac{1}{\sqrt{E\left[ U_{f_{2},n}\left( Z\right) ^{2}\right] }}E%
\left[ \left\vert U_{f_{2},n}\left( Y+Z\right) -U_{f_{2},n}\left( Z\right)
\right\vert \right] .
\end{equation*}%
Then by assumption (\ref{23NEW}),%
\begin{equation*}
\frac{c_{2}}{\sqrt{n}}\leqslant d_{W}\left( \frac{U_{f_{2},n}\left(
Y+Z\right) }{\sqrt{E\left[ U_{f_{2},n}\left( Z\right) ^{2}\right] }}%
,N\right) +\frac{\sqrt{u_{f_{2}}\left( Z\right) }}{\sqrt{E\left[
U_{f_{2},n}\left( Z\right) ^{2}\right] }}\frac{c_{3}}{\sqrt{n}}.
\end{equation*}%
Now using the trivial consequence of Lemma \ref{dWL1} by which, for any
random variable $Z$ and constants $a,b$, $d_{W}\left( aZ,N\right) \leqslant
d_{W}\left( bZ,N\right) +\left\vert a-b\right\vert \left\Vert Z\right\Vert
_{L^{1}\left( \Omega \right) }$, we get%
\begin{eqnarray}
\frac{c_{2}}{\sqrt{n}} &\leqslant &d_{W}\left( \frac{U_{f_{2},n}\left(
Y+Z\right) }{\sqrt{u_{f_{2}}\left( Z\right) }},N\right)  \notag \\
&&+\left\vert \frac{1}{\sqrt{u_{f_{2}}\left( Z\right) }}-\frac{1}{\sqrt{E%
\left[ U_{f_{2},n}\left( Z\right) ^{2}\right] }}\right\vert E\left[
\left\vert U_{f_{2},n}\left( Z+Y\right) \right\vert \right] +\frac{\sqrt{%
u_{f_{2}}\left( Z\right) }}{\sqrt{E\left[ U_{f_{2},n}\left( Z\right) ^{2}%
\right] }}\frac{c_{3}}{\sqrt{n}}.  \label{c3claim}
\end{eqnarray}%
Regarding the middle term in the right-hand side above, we claim the
following: for any $\varepsilon >0$ and for $n$ large enough,%
\begin{equation}
\left\vert \frac{1}{\sqrt{u_{f_{2}}\left( Z\right) }}-\frac{1}{\sqrt{E\left[
U_{f_{2},n}\left( Z\right) ^{2}\right] }}\right\vert E\left[ \left\vert
U_{f_{2},n}\left( Z+Y\right) \right\vert \right] \leqslant \frac{c_{4}\left(
1+\varepsilon \right) }{\sqrt{n}}.  \label{c4claim}
\end{equation}%
Let us prove this claim. To lighten the notation, we drop the subscripts. By
assumption (\ref{23NEW}), we have%
\begin{eqnarray*}
&&\left\vert \frac{1}{\sqrt{u\left( Z\right) }}-\frac{1}{\sqrt{E\left[
U\left( Z\right) ^{2}\right] }}\right\vert E\left[ \left\vert U\left(
Z+Y\right) \right\vert \right] \\
&\leqslant &\left\vert \frac{\sqrt{n}}{\sqrt{u\left( Z\right) }}-\frac{\sqrt{%
n}}{\sqrt{E\left[ U\left( Z\right) ^{2}\right] }}\right\vert \left( E\left[
\left\vert Q\left( Z\right) -\lambda \left( Z\right) \right\vert \right] +E%
\left[ \left\vert Q\left( Z+Y\right) -Q\left( Z\right) \right\vert \right]
\right) \\
&\leqslant &\left\vert \frac{1}{\sqrt{u\left( Z\right) }}-\frac{1}{\sqrt{E%
\left[ U\left( Z\right) ^{2}\right] }}\right\vert \left( \sqrt{E\left[
U\left( Z\right) ^{2}\right] }+\frac{c_{3}\sqrt{u\left( Z\right) }}{\sqrt{n}}%
\right) .
\end{eqnarray*}%
Since $E\left[ U\left( Z\right) ^{2}\right] $ converges to $u\left( Z\right)
$, and after some simple algebra, for $n$ large enough we get%
\begin{eqnarray*}
\left\vert \frac{1}{\sqrt{u\left( Z\right) }}-\frac{1}{\sqrt{E\left[ U\left(
Z\right) ^{2}\right] }}\right\vert E\left[ \left\vert U\left( Z+Y\right)
\right\vert \right] &\leqslant &\left\vert \frac{1}{\sqrt{u\left( Z\right) }}%
-\frac{1}{\sqrt{E\left[ U\left( Z\right) ^{2}\right] }}\right\vert \left(
1+\varepsilon \right) \sqrt{u\left( Z\right) } \\
&\leqslant &\frac{1+\varepsilon }{2u\left( Z\right) }\left\vert u\left(
Z\right) -E\left[ U\left( Z\right) ^{2}\right] \right\vert .
\end{eqnarray*}%
Thus (\ref{c4claim}) follows immediately from assumption (\ref{youyou}).
Combining (\ref{c4claim}) with (\ref{c3claim}), and again using the
convergence of $E\left[ U_{f_{2},n}\left( Z\right) ^{2}\right] $ to $%
u_{f_{2}}\left( Z\right) $, we finally obtain that for any $\varepsilon >0$
and for $n$ large enough%
\begin{equation*}
\frac{c_{2}}{\sqrt{n}}\leqslant d_{W}\left( \frac{U_{f_{2},n}\left(
Y+Z\right) }{\sqrt{u_{f_{2}}\left( Z\right) }},N\right) +\frac{\left(
c_{3}+c_{4}\right) \left( 1+\varepsilon \right) }{\sqrt{n}},
\end{equation*}%
Since, $c_{4}+c_{4}<c_{2}$, $\varepsilon >0$ exists such that $c_{2}-\left(
1+\varepsilon \right) \left( c_{3}+c_{4}\right) >0$, which finishes the
lower bound of the theorem.

The upper bound is easier to prove, and follows from the same estimates as
for the lower bound. Details are omitted.
\end{proof}

\begin{remark}
\label{NNCLT} The non-central limit theorem in Remark \ref{NCLT} part (1)
also holds if $Z$ is replaced by $Z+Y$ under assumption (\ref{hypothesis on
Z+Y}) if $\gamma >1/2$ ; and similarly for part (2) if $\gamma >H-\left(
q-1\right) /2q$. These results' proofs, which are omitted, follow the
results in Remark \ref{NCLT} and from the tools in this section and those in
\cite{NV2014} and \cite{BN}.
\end{remark}

\subsection{Towards a Berry-Esséen theorem for parameter estimators\label%
{Towards}}

In the previous two sections, we saw how to prove asymptotically normality
for the empirical sums of the form $U_{f_{q},n}\left( Z\right) $ (or $%
U_{f_{q},n}\left( Y+Z\right) $ where $Y$ is a non-stationary correction
process), with convergence speed theorems in total variation and Wasserstein
distances. These apply to parameter estimation if the quantity one is after
is the expected value \thinspace $\lambda _{f_{q}}(Z):=E\left[ f_{q}\left(
Z_{0}\right) \right] $. In this section we evaluate the same question if the
parameter one seeks is implicit in $\lambda _{f_{q}}(Z)$.

Thus assume that one is looking for the unknown parameter $\theta >0$ and
that there is a homeomorphism $g$ such that
\begin{equation*}
\lambda _{f_{q}}(Z)=g^{-1}(\theta ):=\theta ^{\ast }.
\end{equation*}%
As stated, so far, for a degree-$q$ polynomial $f_{q}$ we have studied the
\textquotedblleft estimator\textquotedblright\
\begin{equation*}
\widehat{\theta }_{n}=Q_{f_{q},n}(Z)=\frac{1}{n}\sum_{i=0}^{n-1}f_{q}(Z_{i}).
\end{equation*}%
We have proved the following in Section \ref{PARAMESTIM} (see for instance
Theorems \ref{consistency for Z} and \ref{CLT}, Corollaries \ref{CLTCor} and %
\ref{CLTCor2}) : $\widehat{\theta }_{n}\longrightarrow \theta ^{\ast }$
almost surely and
\begin{equation*}
d_{W}\left( \frac{\sqrt{n}}{\sqrt{E\left[ U_{f_{q},n}^{2}(Z)\right] }}\left(
\widehat{\theta }_{n}-\theta ^{\ast }\right) ,\mathcal{N}(0,1)\right)
\leqslant \varphi (n)
\end{equation*}%
where
\begin{equation*}
U_{f_{q},n}(Z)=\sqrt{n}\left( Q_{f_{q},n}(Z)-\lambda _{f_{q}}(Z)\right) =%
\widehat{\theta }_{n}-\theta ^{\ast }.
\end{equation*}%
and where $\varphi \left( n\right) $ tends to $0$ as $n\rightarrow \infty $
at various speeds which can be determined thanks to the precise statements
in Corollary \ref{CLTCor}, for instance $\varphi \left( n\right) =1/\sqrt{n}$
in Corollary \ref{CLTCor2}, which is the classical Berry-Esséen speed.

By using the relation between $\theta $ and $\lambda $, we naturally define
the estimator of $\theta $ by
\begin{equation*}
\check{\theta}_{n}:=g\left( \widehat{\theta }_{n}\right) .
\end{equation*}%
This is a consistent estimator by Theorem \ref{consistency for Z} since $g$
is continuous by assumption: $\check{\theta}_{n}\longrightarrow \theta $
almost surely. Now assume $g$ is a diffeomorphism. By the mean-value theorem
we can write
\begin{equation*}
\left( \check{\theta}_{n}-\theta \right) =g^{\prime }(\xi _{n})\left(
\widehat{\theta }_{n}-\theta ^{\ast }\right)
\end{equation*}%
where $\xi _{n}$ is a random variable which belongs to $[|\widehat{\theta }%
_{n},\theta ^{\ast }|]$. As a consequence
\begin{eqnarray*}
&&d_{W}\left( \frac{\sqrt{n}}{g^{\prime }(\theta ^{\ast })\sqrt{E\left[
U_{f_{q},n}^{2}(Z)\right] }}\left( \check{\theta}_{n}-\theta \right) ,%
\mathcal{N}(0,1)\right) \\
&\leqslant &d_{W}\left( \frac{\sqrt{n}}{g^{\prime }(\theta ^{\ast })\sqrt{E%
\left[ U_{f_{q},n}^{2}(Z)\right] }}\left( \check{\theta}_{n}-\theta \right) ,%
\frac{\sqrt{n}}{\sqrt{E\left[ U_{f_{q},n}^{2}(Z)\right] }}\left( \widehat{%
\theta }_{n}-\theta ^{\ast }\right) \right) \\
&&+d_{W}\left( \frac{\sqrt{n}}{\sqrt{E\left[ U_{f_{q},n}^{2}(Z)\right] }}%
\left( \widehat{\theta }_{n}-\theta ^{\ast }\right) ,\mathcal{N}(0,1)\right)
\end{eqnarray*}%
The last term above is controlled by $\varphi \left( n\right) $ as
mentioned. Now assume that $g$ is twice continuously differentiable. Then by
the mean-value theorem again, for $\zeta _{n}$ some random variable which
belongs to $[|\xi _{n},\theta ^{\ast }|]\subset \lbrack |\widehat{\theta }%
_{n},\theta ^{\ast }|]$, the other term above is controlled as%
\begin{eqnarray*}
&&d_{W}\left( \frac{\sqrt{n}}{g^{\prime }(\theta ^{\ast })\sqrt{E\left[
U_{f_{q},n}^{2}(Z)\right] }}\left( \check{\theta}_{n}-\theta \right) ,\frac{%
\sqrt{n}}{\sqrt{E\left[ U_{f_{q},n}^{2}(Z)\right] }}\left( \widehat{\theta }%
_{n}-\theta ^{\ast }\right) \right) \\
&\leqslant &\frac{1}{g^{\prime }(\theta ^{\ast })\sqrt{E\left[
U_{f_{q},n}^{2}(Z)\right] }}E\left\vert \sqrt{n}\left( \widehat{\theta }%
_{n}-\theta ^{\ast }\right) \left( g^{\prime }(\xi _{n})-g^{\prime }(\theta
^{\ast })\right) \right\vert \\
&=&\frac{1}{g^{\prime }(\theta ^{\ast })\sqrt{E\left[ U_{f_{q},n}^{2}(Z)%
\right] }}E\left\vert \sqrt{n}\left( \widehat{\theta }_{n}-\theta ^{\ast
}\right) g^{\prime \prime }(\zeta _{n})\left( \xi _{n}-\theta ^{\ast
}\right) \right\vert
\end{eqnarray*}%
\begin{eqnarray*}
&\leqslant &\frac{1}{g^{\prime }(\theta ^{\ast })\sqrt{E\left[
U_{f_{q},n}^{2}(Z)\right] }}E\left\vert \sqrt{n}\left( \widehat{\theta }%
_{n}-\theta ^{\ast }\right) ^{2}g^{\prime \prime }(\zeta _{n})\right\vert \\
&\leqslant &\frac{1}{g^{\prime }(\theta ^{\ast })\sqrt{E\left[
U_{f_{q},n}^{2}(Z)\right] }}\sqrt{n}\left[ E\left( \left( \widehat{\theta }%
_{n}-\theta ^{\ast }\right) ^{2p}\right) \right] ^{1/p}\left[ E\left(
g^{\prime \prime }(\zeta _{n})^{p^{\prime }}\right) \right] ^{1/p^{\prime }}
\end{eqnarray*}

where $p$ and $p^{\prime }$ are conjugate reals greater than $1$, i.e. $%
1/p+1/p^{\prime }=1$. Moreover
\begin{equation*}
\sqrt{n}\left[ E\left( \left( \widehat{\theta }_{n}-\theta ^{\ast }\right)
^{2p}\right) \right] ^{1/p}\leqslant c_{p}\sqrt{n}E\left( \left( \widehat{%
\theta }_{n}-\theta ^{\ast }\right) ^{2}\right) =O(\frac{1}{\sqrt{n}}).
\end{equation*}%
Therefore the only question left to transfer the quantitative results of
Section \ref{PARAMESTIM} to $\check{\theta}_{n}$ is whether one can prove,
for instance, that $g^{\prime \prime }(\zeta _{n})$ has a bounded moment of
order greater than $1$. We will see several examples in Section \ref{AOUP}
where this is easy to check. More generally, we advocate checking this on a
case-by-case basis when the function $g$ can be identified. In the meantime,
we summarize this discussion with the following general principle, which
follows from the above discussion.

\begin{theorem}
\label{Berryesseentheta}Consider the setup from Corollary \ref{CLTCor}, in
which $\widehat{\theta }_{n}=Q_{f_{q},n}(Z):=\frac{1}{n}%
\sum_{i=0}^{n-1}f_{q}(Z_{i})$ and $\theta ^{\ast }=E\left[ f_{q}\left(
Z_{0}\right) \right] ,$ with $\varphi \left( n\right) $ an upper bound for
the expression in (\ref{CorPoint2b}) which converges to $0$. Assume that
there exists a twice-differentiable invertible function $g$ and a value $%
\theta $ such that
\begin{equation*}
g^{-1}(\theta ):=\theta ^{\ast }.
\end{equation*}%
If $g^{\prime \prime }\left( \widehat{\theta }_{n}\right) $ has a moment of
order greater than $1$ which is bounded in $n$, the expression%
\begin{equation*}
\check{\theta}_{n}:=g\left( \widehat{\theta }_{n}\right)
\end{equation*}%
is a strongly consistent and asymptotically normal estimator of $\theta $
and
\begin{equation*}
d_{W}\left( \frac{\sqrt{n}}{g^{\prime }(\theta ^{\ast })\sqrt{E\left[
U_{f_{q},n}^{2}(Z)\right] }}\left( \check{\theta}_{n}-\theta \right) ,%
\mathcal{N}(0,1)\right) \leqslant C\frac{1}{\sqrt{n}}+\varphi \left( n\right)
\end{equation*}%
where $\varphi \left( n\right) $ is the speed of convergence in Corollary %
\ref{CLTCor}.
\end{theorem}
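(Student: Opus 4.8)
The plan is to obtain the rate for $\check{\theta}_{n}=g(\widehat{\theta}_{n})$ by a quantitative delta-method argument that pushes the Wasserstein bound already available for $\widehat{\theta}_{n}$ through the map $g$. Strong consistency requires no work: Theorem \ref{consistency for Z} gives $\widehat{\theta}_{n}\to\theta^{\ast}$ almost surely, and since $g$ is continuous, $\check{\theta}_{n}=g(\widehat{\theta}_{n})\to g(\theta^{\ast})=\theta$ almost surely.

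For the quantitative statement, I would first apply the mean-value theorem to write $\check{\theta}_{n}-\theta=g'(\xi_{n})(\widehat{\theta}_{n}-\theta^{\ast})$ with $\xi_{n}\in[|\widehat{\theta}_{n},\theta^{\ast}|]$, and then invoke the triangle inequality for $d_{W}$ to bound the target distance by the sum of two terms: the distance between the normalized $\check{\theta}_{n}$ and the normalized $\widehat{\theta}_{n}$, and the distance $d_{W}\!\left(\sqrt{n}(\widehat{\theta}_{n}-\theta^{\ast})/\sqrt{E[U_{f_{q},n}^{2}(Z)]},\mathcal{N}(0,1)\right)$. Since $\sqrt{n}(\widehat{\theta}_{n}-\theta^{\ast})=U_{f_{q},n}(Z)$, this second term is exactly the quantity controlled in Corollary \ref{CLTCor}, hence bounded by $\varphi(n)$.

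The heart of the argument is the first term. A second application of the mean-value theorem gives $g'(\xi_{n})-g'(\theta^{\ast})=g''(\zeta_{n})(\xi_{n}-\theta^{\ast})$ for some $\zeta_{n}\in[|\widehat{\theta}_{n},\theta^{\ast}|]$, so after multiplying by $\sqrt{n}/(g'(\theta^{\ast})\sqrt{E[U_{f_{q},n}^{2}(Z)]})$ and using $|\xi_{n}-\theta^{\ast}|\leq|\widehat{\theta}_{n}-\theta^{\ast}|$, this term is at most a constant multiple of $\frac{\sqrt{n}}{\sqrt{E[U_{f_{q},n}^{2}(Z)]}}\,E\big|(\widehat{\theta}_{n}-\theta^{\ast})^{2}g''(\zeta_{n})\big|$. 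I would then split the expectation by H\"{o}lder's inequality with conjugate exponents $p,p'>1$ and control the first factor by the hypercontractivity estimate (\ref{hypercontractivity}): because $\widehat{\theta}_{n}-\theta^{\ast}$ is a finite sum of Wiener chaoses, $[E((\widehat{\theta}_{n}-\theta^{\ast})^{2p})]^{1/p}\leq c_{p}\,E((\widehat{\theta}_{n}-\theta^{\ast})^{2})=c_{p}\,E[U_{f_{q},n}^{2}(Z)]/n$. Under Condition (\ref{BM}) the variance $E[U_{f_{q},n}^{2}(Z)]$ is bounded (it converges to $u_{f_{q}}(Z)$ by Lemma \ref{uZlemma}), so the prefactor $\sqrt{n}\cdot E[U_{f_{q},n}^{2}(Z)]/n$ is $\mathcal{O}(1/\sqrt{n})$, which is the source of the $C/\sqrt{n}$ term in the conclusion.

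The main obstacle is the remaining H\"{o}lder factor $[E(g''(\zeta_{n})^{p'})]^{1/p'}$: the hypothesis controls a moment of $g''(\widehat{\theta}_{n})$, whereas $\zeta_{n}$ is only located in the random interval $[|\widehat{\theta}_{n},\theta^{\ast}|]$. To bridge this I would use that $g''$ is continuous and $\zeta_{n}$ lies between $\widehat{\theta}_{n}$ and the fixed point $\theta^{\ast}$, giving the pointwise domination $|g''(\zeta_{n})|\leq|g''(\theta^{\ast})|+\sup_{t\in[|\widehat{\theta}_{n},\theta^{\ast}|]}|g''(t)|$; for the functions $g$ arising in the applications, where $g''$ is monotone or of at most polynomial growth as is checked directly in Section \ref{AOUP}, this supremum is dominated by a continuous function of $\widehat{\theta}_{n}$ whose $p'$-th moment is comparable to the assumed bounded moment of $g''(\widehat{\theta}_{n})$. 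With this factor bounded uniformly in $n$ and the $\mathcal{O}(1/\sqrt{n})$ prefactor, the first term is $\mathcal{O}(1/\sqrt{n})$, and summing the two contributions yields the announced bound $C/\sqrt{n}+\varphi(n)$.
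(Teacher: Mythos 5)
Your proposal is correct and follows essentially the same route as the paper's own argument in Section \ref{Towards}: two applications of the mean-value theorem, the triangle inequality for $d_{W}$, H\"{o}lder's inequality, and the hypercontractivity bound (\ref{hypercontractivity}) to obtain the $\mathcal{O}(1/\sqrt{n})$ prefactor. Your explicit discussion of replacing the moment of $g''(\zeta_{n})$ (where $\zeta_{n}$ lies in the random interval $[|\widehat{\theta}_{n},\theta^{\ast}|]$) by the assumed moment of $g''(\widehat{\theta}_{n})$ is in fact slightly more careful than the paper, which leaves this identification to case-by-case verification in the applications.
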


\section{Improving the rate convergence\label{IRC}}

Consider our usual centered stationary Gaussian sequence $Z$ with
autocorrelation function $r_{Z}$, and define :
\begin{equation*}
Z_{k}^{(0)}=Z_{k},\quad k\in \mathbb{Z}
\end{equation*}%
and for every $p\geq 1$
\begin{equation*}
Z_{k}^{(p)}=Z_{k+1}^{(p-1)}-Z_{k}^{(p-1)},k\in \mathbb{Z}.
\end{equation*}%
It is well known, and easily verified, that if $k\mapsto r_{Z}\left(
k\right) $ decays like $k^{-\alpha }$ for $\alpha >0$ as $k\rightarrow
\infty $, then the $p$th-order finite difference process $Z^{\left( p\right)
}$ defined above is also a centered and stationary Gaussian sequence, with
an autocorrelation function which decays like $k^{-\alpha -2p}$. On the
other hand, the reader will easily check, or consult the computations in
Section 6.5 of \cite{BBNP}, which are also summarized in Proposition 4.2
part (3) in \cite{NP2013}, and extended in \cite{NV2014} to cover the case
at hand here, that as soon as $-\alpha -2p<-3/4$, we have
\begin{equation*}
\kappa _{4}(U_{f_{2},n}(Z^{\left( p\right) }))\leqslant c\left( Z\right)
k^{-1}
\end{equation*}%
as $k\rightarrow \infty $ for some constant $c\left( Z\right) $ depending
only on the law of $Z$. The condition $-\alpha -2p<-3/4$ is satisfied for
any $\alpha >0$, as soon as the integer $p\geq 1$, evidently. For
non-integer $p>0$, we may define fractional finite-differences $Z^{\left(
p\right) }$ using the standard formal power series expansion, in which case
the condition on $p$ becomes
\begin{equation}
p>3/8-\alpha .  \label{pee}
\end{equation}%
Thus by applying Theorem \ref{CLT} and part (3) of Corollary \ref{CLTCor},
we immediately conclude the following, which we state for integer $p$ since
the case of non-integer $p$ is arguably of lesser practical use.

\begin{theorem}
\label{ThmImprove}Let $q$ be an even positive integer. Assume that $Z,$ $%
r_{Z}$, $f_{q}$, $U_{f_{q},n}(Z)$ and $F_{f_{q},n}(Z)$ are as in Theorem \ref%
{CLT}. Assume that $k^{\alpha }r_{Z}(k)$ converges to a constant for some $%
\alpha >0$ as $k\rightarrow \infty $. Then for every integer $p\geq 1$,
there exists $C$ depending on $p,q$ and $r_{Z^{\left( p\right) }}\left(
0\right) $ such that
\begin{equation*}
d_{TV}\left( F_{f_{q,n}}((Z^{(p)})),N\right) \leqslant Cn^{-1/4}.
\end{equation*}%
In particular, $Z^{\left( p\right) }$ satisfies condition (\ref{BM}) and $%
U_{f_{q,n}}((Z^{(p)}))$ is asymptotically normal.

For instance, if $Z$ has autocorrelation asymptotics which are equivalent to
those of a fractional Gaussian noise with Hurst parameter $H$, then $\alpha
=2-2H>0$ and the above statements hold for all $H\in \left( 0,1\right) $.
\end{theorem}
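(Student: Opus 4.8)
The plan is to reduce the claim to the two estimates of Theorem \ref{CLT}, so that the rate $n^{-1/4}$ drops out once one knows that the differenced sequence $Z^{(p)}$ has short enough memory. Concretely, the only genuine work is, first, to show that passing from $Z$ to $Z^{(p)}$ raises the power-law decay exponent of the autocorrelation by exactly $2p$, and second, to check that the resulting exponent clears the summability thresholds invoked by Theorem \ref{CLT} and part (3) of Corollary \ref{CLTCor}; everything after that is substitution.

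First I would compute the autocorrelation of the first difference $Z^{(1)}_k=Z_{k+1}-Z_k$. Expanding $E[(Z_{k+1}-Z_k)(Z_{k+j+1}-Z_{k+j})]$ and using stationarity gives $r_{Z^{(1)}}(j)=-\left(r_Z(j+1)-2r_Z(j)+r_Z(j-1)\right)$, i.e. the first difference of the \emph{process} corresponds to the centered second difference of the \emph{autocorrelation}. Iterating, $r_{Z^{(p)}}$ is, up to a sign and a positive constant, the $2p$-th order finite difference of $r_Z$. Treating $r_Z$ as the restriction to the integers of a smooth, regularly decaying function with $k^\alpha r_Z(k)\to c$, an elementary discrete-calculus argument (the discrete analogue of differentiating $k^{-\alpha}$ a total of $2p$ times) identifies the leading order, giving in particular the upper bound $|r_{Z^{(p)}}(k)|=\mathcal{O}(k^{-\alpha-2p})$. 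This is the step the surrounding text calls ``well known'', and I expect it to be the one real obstacle: the bare convergence $k^\alpha r_Z(k)\to c$ does not by itself control the second differences --- the cancellations in $r_Z(j+1)-2r_Z(j)+r_Z(j-1)$ can be spoiled by lower-order fluctuations unless some smoothness of $r_Z$ is assumed --- so an honest proof requires either the regularity just described (which holds in every example of interest) or a direct appeal to the computations in \cite{BBNP}, \cite{NP2013} and \cite{NV2014}.

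With the bound $|r_{Z^{(p)}}(k)|=\mathcal{O}(k^{-\alpha-2p})$ in hand, the rest is bookkeeping. For any integer $p\geqslant 1$ and any $\alpha>0$ one has $\alpha+2p>2$, hence $2(\alpha+2p)>1$ gives $\sum_j r_{Z^{(p)}}(j)^2<\infty$; so $Z^{(p)}$ satisfies Condition (\ref{BM}) and, by Lemma \ref{uZlemma}, $u_{f_2}(Z^{(p)})$ and $u_{f_q}(Z^{(p)})$ are finite and strictly positive (each inner sum contains the strictly positive $j=0$ term and the leading coefficient is nonzero). Moreover $\tfrac43(\alpha+2p)>1$ yields the stronger summability $\sum_j|r_{Z^{(p)}}(j)|^{4/3}<\infty$, which is precisely the threshold $-\alpha-2p<-3/4$. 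Feeding this into the second estimate of Theorem \ref{CLT}, namely $\kappa_4(F_{f_2,n}(Z^{(p)}))=\mathcal{O}\!\left((\sum_{|j|<n}|r_{Z^{(p)}}(j)|^{4/3})^3/(n\,(E[U_{f_2,n}^2(Z^{(p)})])^2)\right)$, and using $E[U_{f_2,n}^2(Z^{(p)})]\to u_{f_2}(Z^{(p)})>0$, the numerator is $\mathcal{O}(1)$ and we obtain $\kappa_4(F_{f_2,n}(Z^{(p)}))=\mathcal{O}(n^{-1})$.

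Finally I would substitute this into the first estimate of Theorem \ref{CLT}, $d_{TV}(F_{f_q,n}(Z^{(p)}),N)\leqslant C_q(Z^{(p)})\sqrt{\sqrt{\kappa_4(F_{f_2,n}(Z^{(p)}))}+\kappa_4(F_{f_2,n}(Z^{(p)}))}$. Since $\kappa_4(F_{f_2,n}(Z^{(p)}))=\mathcal{O}(n^{-1})$, the inner square root $\sqrt{n^{-1}}=n^{-1/2}$ dominates, and the outer square root gives $n^{-1/4}$, so $d_{TV}(F_{f_q,n}(Z^{(p)}),N)\leqslant Cn^{-1/4}$ with $C$ depending on $p,q$ and $r_{Z^{(p)}}(0)$; asymptotic normality is then immediate, and part (3) of Corollary \ref{CLTCor} applies since the clean power-law tail is asymptotically of constant sign and monotone. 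For the fractional-Gaussian-noise illustration it suffices to recall that fGn with Hurst parameter $H\in(0,1)$ has $r(k)\sim H(2H-1)|k|^{2H-2}$, so $k^\alpha r(k)$ converges with $\alpha=2-2H>0$; the general conclusion then applies for every $H\in(0,1)$, the point being that even a single difference ($p=1$) pushes any such sequence well past the critical memory barrier at $H=3/4$.
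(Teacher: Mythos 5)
Your proposal is correct and follows essentially the same route as the paper: the paper likewise asserts that differencing raises the power-law decay exponent by $2p$ (citing \cite{BBNP}, \cite{NP2013}, \cite{NV2014} for the resulting $\kappa_4(U_{f_2,n}(Z^{(p)}))=\mathcal{O}(n^{-1})$ bound) and then concludes immediately from Theorem \ref{CLT} and part (3) of Corollary \ref{CLTCor}. Your caveat that the bare convergence $k^\alpha r_Z(k)\to c$ does not by itself control the iterated second differences of $r_Z$ is a fair observation about a step the paper labels ``well known, and easily verified,'' but it does not change the substance of the argument.
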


To apply the remainder of Corollary \ref{CLTCor}, we note that, by the
calculation in Section \ref{EXGEN}, under the assumption that $r_{Z}(k)\sim
ck^{-\alpha }$, since $Z^{\left( p\right) }$ satisfies condition (\ref{BM}),
the control on the variance discrepancy term given in (\ref{variancespeed})
is of the same order as:

\begin{equation*}
\sum_{\left\vert j\right\vert >n}r_{Z^{\left( p\right) }}(j)^{2}\sim
\sum_{\left\vert j\right\vert >n}j^{-2\alpha -4p}\asymp n^{-2\alpha
-4p+1}=o\left( n^{-3}\right) .
\end{equation*}%
This is negligible compared to $n^{-1/4}$. Thus we immediately get the
following by part (2) of Corollary \ref{CLTCor}.

\begin{corollary}
\label{CorImprove}Under the notation and conditions of Theorem \ref%
{ThmImprove}, for some constant $c_{p,q}(Z)$ depending only on $p,q,$ and
the law of $Z$,%
\begin{equation*}
d_{TV}\left( U_{f_{q},n}(Z^{\left( p\right) }),\mathcal{N}\left(
0,u_{f_{q}}(Z^{\left( p\right) })\right) \right) \leqslant
c_{p,q}(Z)n^{-1/4}.
\end{equation*}%
This holds for instance if $Z$ is as in the last statement in Theorem \ref%
{ThmImprove}.
\end{corollary}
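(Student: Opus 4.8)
The plan is to apply part (2) of Corollary \ref{CLTCor} verbatim to the finite-difference sequence $Z^{(p)}$, rather than to $Z$ itself. This is legitimate because Theorem \ref{ThmImprove} guarantees that $Z^{(p)}$ is again a centered stationary Gaussian process whose autocorrelation decays like $k^{-\alpha-2p}$ and which therefore satisfies Condition (\ref{BM}); in particular $u_{f_q}(Z^{(p)})$ is finite and strictly positive, so the normalization by $\sqrt{u_{f_q}(Z^{(p)})}$ is meaningful. Corollary \ref{CLTCor} then splits the total variation distance into a \emph{core normal-convergence term}, namely $C_q(Z^{(p)})\sqrt{\sqrt{\kappa_4(F_{f_2,n}(Z^{(p)}))}+\kappa_4(F_{f_2,n}(Z^{(p)}))}$, and a \emph{variance-discrepancy term} $2\,|1-E[U_{f_q,n}^2(Z^{(p)})]/u_{f_q}(Z^{(p)})|$. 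I would bound these two contributions separately.

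For the core term I would invoke the second estimate of Theorem \ref{CLT}: since $r_{Z^{(p)}}(k)\asymp k^{-\alpha-2p}$ with $\alpha+2p>3/4$, the sum $\sum_{|j|<n}|r_{Z^{(p)}}(j)|^{4/3}$ converges, so $\kappa_4(U_{f_2,n}(Z^{(p)}))=O(n^{-1})$, as recorded just above in Section \ref{IRC}. Because $E[U_{f_2,n}^2(Z^{(p)})]$ converges to the strictly positive constant $u_{f_2}(Z^{(p)})$, the normalized fourth cumulant $\kappa_4(F_{f_2,n}(Z^{(p)}))$ is also $O(n^{-1})$; taking the fourth root shows that the core term is $O(n^{-1/4})$, which is exactly the rate asserted by Theorem \ref{ThmImprove} for $d_{TV}(F_{f_q,n}(Z^{(p)}),N)$.

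For the variance-discrepancy term I would use Lemma \ref{uZlemma} to identify $|1-E[U_{f_q,n}^2(Z^{(p)})]/u_{f_q}(Z^{(p)})|$ with a constant multiple of the tail expression (\ref{variancespeed}) evaluated at $Z^{(p)}$, whose dominant ($k=1$) contribution is $\sum_{|j|>n}r_{Z^{(p)}}(j)^2\asymp n^{-2\alpha-4p+1}=o(n^{-3})$ by the computation displayed immediately before the corollary; the higher-order tails $\sum_{|j|>n}r_{Z^{(p)}}(j)^{2k}$ decay at least as fast. This contribution is therefore negligible against $n^{-1/4}$ and is absorbed into the final constant. Adding the two bounds yields $d_{TV}(U_{f_q,n}(Z^{(p)}),\mathcal{N}(0,u_{f_q}(Z^{(p)})))\leqslant c_{p,q}(Z)\,n^{-1/4}$, with $c_{p,q}(Z)$ collecting $C_q(Z^{(p)})$ together with the tail constants, all depending only on $p$, $q$ and the law of $Z$. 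There is no genuine obstacle here: the corollary is essentially a repackaging of Theorem \ref{ThmImprove} and part (2) of Corollary \ref{CLTCor}. If anything, the only substantive point is the core-term rate, which rests entirely on the fact, established in Section \ref{IRC}, that finite-differencing drives the memory below the $H=3/4$ threshold and hence forces $\kappa_4(U_{f_2,n}(Z^{(p)}))=O(n^{-1})$; the remaining care is the routine verification that the leading constants are finite and that $u_{f_q}(Z^{(p)})>0$, both of which follow from Condition (\ref{BM}) for $Z^{(p)}$.
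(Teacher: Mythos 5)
Your proposal is correct and follows essentially the same route as the paper: the paper also obtains the corollary by applying part (2) of Corollary \ref{CLTCor} to $Z^{(p)}$, bounding the core term by $O(n^{-1/4})$ via the $\kappa_4(U_{f_2,n}(Z^{(p)}))=O(n^{-1})$ estimate from Section \ref{IRC} (i.e.\ Theorem \ref{ThmImprove}), and showing the variance-discrepancy tail $\sum_{|j|>n}r_{Z^{(p)}}(j)^2\asymp n^{-2\alpha-4p+1}=o(n^{-3})$ is negligible. No gaps.
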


For the same reason as in the case of $p=0$, there is no reason to believe
that the speed $n^{-1/4}$ is sharp, but when $q=2$, a sharp result can be
established. The results of Section \ref{QC} imply the next sharp Berry-Essé%
en-type theorem.

\begin{theorem}
\label{OptimalImprove}Under all the assumptions of Theorem \ref{ThmImprove},%
\begin{equation*}
d_{W}\left( U_{f_{2},n}(Z^{\left( p\right) }),\mathcal{N}\left(
0,u_{f_{2}}(Z^{\left( p\right) })\right) \right) \asymp \frac{1}{\sqrt{n}}.
\end{equation*}
\end{theorem}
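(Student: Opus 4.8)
The plan is to derive both bounds at one stroke by specializing the optimal two‑sided estimate of Theorem \ref{OptimalTheorem} to the single stationary sequence $Z^{(p)}$, i.e. taking the nuisance process to be $Y\equiv 0$. First I would record that, under the hypotheses of Theorem \ref{ThmImprove}, the $p$th finite difference $Z^{(p)}$ is a centered stationary Gaussian sequence whose autocorrelation satisfies $r_{Z^{(p)}}(k)\sim c'k^{-\alpha-2p}$; in particular $r_{Z^{(p)}}$ is asymptotically of constant sign and monotone, and since $\alpha+2p>2$ it is square summable, so Condition (\ref{BM}) holds and $Z^{(p)}$ is covered by part (2) of Proposition \ref{CLT for quadratic var}.

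Next I would verify that the constant $L\left(Z^{(p)}\right)$ from (\ref{Ldef}) is a finite positive number. Because $\frac{3}{2}(\alpha+2p)>1$ and $2(\alpha+2p)>1$, both series $\sum_{|k|<n}|r_{Z^{(p)}}(k)|^{3/2}$ and $\sum_{|k|<n}|r_{Z^{(p)}}(k)|^{2}$ converge to positive finite limits, so the constants $c_{1}(Z^{(p)})$, $C_{1}(Z^{(p)})$, $L(Z^{(p)})$ and $c_{2}=2c_{1}L$ of (\ref{c1})--(\ref{c2}) are all well defined and positive. Already (\ref{optimal3}) then pins down the sharp order $\left|\kappa_{3}(F_{f_{2},n}(Z^{(p)}))\right|\asymp L(Z^{(p)})/\sqrt{n}\asymp 1/\sqrt{n}$, so that the Wasserstein lower bound (\ref{dWc2}), obtained through Lemma \ref{LBdWlemma}, reads $d_{W}(F_{f_{2},n}(Z^{(p)}),N)\geq c_{2}/\sqrt{n}$.

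It remains to check the two hypotheses of Theorem \ref{OptimalTheorem} with $Y\equiv0$. Condition (\ref{23NEW}) then holds trivially with $c_{3}=0$, and the only genuine verification is the variance‑rate condition (\ref{youyou}). Here I would use the exact formula (\ref{variance of Q_{f_q,n}(Z)}): for $f_{2}(x)=x^{2}$ one gets
\begin{equation*}
u_{f_{2}}(Z^{(p)})-E\left[U_{f_{2},n}^{2}(Z^{(p)})\right]=4\sum_{j\geq n}r_{Z^{(p)}}(j)^{2}+\frac{4}{n}\sum_{j=1}^{n-1}j\,r_{Z^{(p)}}(j)^{2}.
\end{equation*}
Since $r_{Z^{(p)}}(j)^{2}\asymp j^{-2\alpha-4p}$ with $2\alpha+4p>4$, the first sum is $\mathcal{O}(n^{-3})$ while $\sum_{j}j\,r_{Z^{(p)}}(j)^{2}<\infty$, so the second term is $\mathcal{O}(1/n)$. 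Thus $\sqrt{n}\,\bigl|u_{f_{2}}(Z^{(p)})-E[U_{f_{2},n}^{2}(Z^{(p)})]\bigr|=\mathcal{O}(1/\sqrt{n})\to 0$, which is all the proof of Theorem \ref{OptimalTheorem} needs: condition (\ref{youyou}) holds for $n$ large with $c_{4}$ as small as we please.

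Choosing $c_{4}$ small enough that $c_{4}<c_{2}$ and $c_{1}L-(1+\varepsilon)c_{4}>0$ — possible precisely because $c_{2}=2c_{1}L>0$ — Theorem \ref{OptimalTheorem} then yields, for $n$ large,
\begin{equation*}
\frac{c_{1}(Z^{(p)})L(Z^{(p)})-(1+\varepsilon)c_{4}}{\sqrt{n}}\leq d_{W}\left(\frac{U_{f_{2},n}(Z^{(p)})}{\sqrt{u_{f_{2}}(Z^{(p)})}},N\right)\leq\frac{C_{1}(Z^{(p)})L(Z^{(p)})+c_{4}}{\sqrt{n}},
\end{equation*}
and multiplying through by the fixed constant $\sqrt{u_{f_{2}}(Z^{(p)})}$ (using $d_{W}(aX,aN)=a\,d_{W}(X,N)$) gives the claimed $d_{W}(U_{f_{2},n}(Z^{(p)}),\mathcal{N}(0,u_{f_{2}}(Z^{(p)})))\asymp 1/\sqrt{n}$. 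The one delicate point is the variance‑rate verification in (\ref{youyou}): one must confirm that $E[U_{f_{2},n}^{2}]$ reaches its limit no slower than $1/\sqrt{n}$, so that this discrepancy cannot corrupt the sharp $1/\sqrt{n}$ lower bound coming from $\kappa_{3}$. The finite‑difference hypothesis $\alpha+2p>2$ is exactly what makes this discrepancy $\mathcal{O}(1/n)$, comfortably within budget, and simultaneously keeps $L(Z^{(p)})$ finite and positive so that the lower constant does not degenerate. (The upper bound could alternatively be read off from the standard fourth‑moment estimate, since $\kappa_{4}(F_{f_{2},n}(Z^{(p)}))=\mathcal{O}(1/n)$ by Theorem \ref{CLT}, giving $d_{W}\leq C\sqrt{\kappa_{4}}=\mathcal{O}(1/\sqrt{n})$.)
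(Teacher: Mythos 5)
Your proposal is correct and follows essentially the same route as the paper: the paper's proof is a sketch that reduces the statement to ``directly checking all the assumptions of Theorem \ref{OptimalTheorem}'' (with the nuisance process absent) and then controlling the variance discrepancy to pass between normalizations, which is exactly what you carry out, with the bookkeeping (finiteness and positivity of $L(Z^{(p)})$, the $\mathcal{O}(1/n)$ variance discrepancy from $\alpha+2p>2$, and the choice of small $c_{4}$ with $c_{3}=0$) filled in explicitly. The only cosmetic caveat is your closing parenthetical: Theorem \ref{CLT} as stated only yields the fourth root of $\kappa_{4}$, i.e.\ $n^{-1/4}$, so the alternative upper bound $C\sqrt{\kappa_{4}}$ requires the optimal fourth moment theorem of \cite{NP2013} rather than Theorem \ref{CLT}; this does not affect your main argument.
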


\begin{proof}
We briefly sketch the ideas in the proof. The details are bookkeeping given
what has already been used to prove Corollary \ref{CLTCor} and Theorem \ref%
{OptimalTheorem}. The statement in Theorem \ref{OptimalImprove} holds with $%
U $ replaced by the normalized version $F$, by directly checking all the
assumptions of Theorem \ref{OptimalTheorem}; to get the result for $U$
instead of $F$, an argument of the same type as in the proof of Corollary %
\ref{CLTCor} suffices, which works just as we saw in the above justification
of Corollary \ref{CorImprove}, because one easily shows that the variance
discrepancy term (\ref{variancespeed}) is still $o\left( n^{-3}\right) $.
\end{proof}

\begin{remark}
Improving rates of convergence by using finite differences also works for
the non-stationary processes of Section (\ref{NONSTAT}). The reason is
simply that, since the process $Y$ is a small perturbation of $Y+Z$ in $%
L^{1}\left( \Omega \right) $ under assumptions such as (\ref{hypothesis on
Z+Y}), these assumptions do not deteriorate under a finite difference of
fixed order $p$, up to the possible inclusion of multiplicative constants
depending only on $p,q$. All details are omitted for conciseness.
\end{remark}

\section{Applications to Ornstein-Uhlenbeck processes : the one-parameter
case \label{AOUP}}

\subsection{Fractional Ornstein-Uhlenbeck process: general case\label{FOUgen}%
}

Consider an Ornstein-Uhlenbeck process $X=\left\{ X_{t},t\geq 0\right\} $
driven by a fractional Brownian motion $B^{H}=\left\{ B_{t}^{H},t\geq
0\right\} $ of Hurst index $H\in (0,1)$. That is, $X$ is the solution of the
following linear stochastic differential equation
\begin{equation}
X_{0}=0;\quad dX_{t}=-\theta X_{t}dt+dB_{t}^{H},\quad t\geq 0,  \label{FOU}
\end{equation}%
whereas $\theta >0$ is considered as unknown parameter. The solution $X$ of (%
\ref{FOU}) has the following explicit expression:
\begin{equation}
X_{t}=\int_{0}^{t}e^{-\theta (t-s)}dB_{s}^{H}.  \label{fOUX}
\end{equation}%
Thus, we can write
\begin{equation}
X_{t}=Z_{t}^{\theta }-e^{-\theta t}Z_{0}^{\theta }  \label{decompoXfOU}
\end{equation}%
where
\begin{equation}
Z_{t}^{\theta }=\int_{-\infty }^{t}e^{-\theta (t-s)}dB_{s}^{H}.
\label{Ztheta}
\end{equation}%
Moreover, it is known that $Z^{\theta }$ is an ergodic stationary Gaussian
process. It is the stationary solution of equation (\ref{FOU}). We are thus
in the setup of Section \ref{NONSTAT} with $Z=Z^{\theta }$ and $%
Y=-e^{-\theta t}Z_{0}^{\theta }$. Consequently, to apply the results of that
section, we need only check that Condition \ref{hypothesis on Z+Y} holds. It
does, according to the following result.

\begin{lemma}
\label{asymptotic R_{Q,q}} Let $X$ and $Z^{\theta }$ be the processes given
in (\ref{FOU}) and (\ref{Ztheta}) respectively. Then for every $p\geq 1$ and
for all $n\in \mathbb{N}$,
\begin{equation*}
\left\Vert Q_{f_{q},n}(X)-Q_{f_{q},n}(Z^{\theta })\right\Vert _{L^{p}(\Omega
)}=\mathcal{O}\left( n^{-1}\right) .
\end{equation*}
\end{lemma}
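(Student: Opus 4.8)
The plan is to exploit the explicit decomposition $X_i = Z_i^\theta + Y_i$ with $Y_i = -e^{-\theta i}Z_0^\theta$ recorded in (\ref{decompoXfOU}), together with the fact that $f_q$ is a polynomial, so that the difference $f_q(X_i)-f_q(Z_i^\theta)$ admits an \emph{exact} finite Taylor expansion around $Z_i^\theta$. First I would write
$$
f_q(X_i)-f_q(Z_i^\theta)=\sum_{k=1}^{q}\frac{f_q^{(k)}(Z_i^\theta)}{k!}\,Y_i^k=\sum_{k=1}^{q}\frac{(-1)^k}{k!}\,e^{-k\theta i}\,f_q^{(k)}(Z_i^\theta)\,(Z_0^\theta)^k,
$$
using $Y_i^k=(-1)^k e^{-k\theta i}(Z_0^\theta)^k$. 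The crucial feature, and the reason the rate is as fast as $n^{-1}$, is the exponential factor $e^{-k\theta i}$, which is present exactly because $\theta>0$.

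Next, averaging over $i$ and applying the triangle inequality in $L^p(\Omega)$ gives
$$
\left\Vert Q_{f_q,n}(X)-Q_{f_q,n}(Z^\theta)\right\Vert_{L^p(\Omega)}\leq\frac{1}{n}\sum_{k=1}^{q}\frac{1}{k!}\sum_{i=0}^{n-1}e^{-k\theta i}\left\Vert f_q^{(k)}(Z_i^\theta)(Z_0^\theta)^k\right\Vert_{L^p(\Omega)}.
$$
I would then bound each summand uniformly in $i$: by stationarity of $Z^\theta$, the variable $f_q^{(k)}(Z_i^\theta)$ has the same law for every $i$, and the Cauchy-Schwarz inequality yields
$$
\left\Vert f_q^{(k)}(Z_i^\theta)(Z_0^\theta)^k\right\Vert_{L^p(\Omega)}\leq\left\Vert f_q^{(k)}(Z_i^\theta)\right\Vert_{L^{2p}(\Omega)}\left\Vert (Z_0^\theta)^k\right\Vert_{L^{2p}(\Omega)}=:C_{p,k}<\infty,
$$
a constant independent of both $i$ and $n$; it is finite because $f_q^{(k)}$ is a polynomial and $Z_0^\theta$ is Gaussian, so all the moments in sight exist.

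Finally, the inner sum is a truncated geometric series, $\sum_{i=0}^{n-1}e^{-k\theta i}\leq(1-e^{-k\theta})^{-1}$, bounded uniformly in $n$ precisely because $\theta>0$. Combining the last three displays gives
$$
\left\Vert Q_{f_q,n}(X)-Q_{f_q,n}(Z^\theta)\right\Vert_{L^p(\Omega)}\leq\frac{1}{n}\sum_{k=1}^{q}\frac{C_{p,k}}{k!\,(1-e^{-k\theta})},
$$
which is $\mathcal{O}(n^{-1})$, as claimed. I do not expect any serious obstacle: the argument is driven entirely by the exponential relaxation of $X$ to its stationary version $Z^\theta$, encoded in the factor $e^{-k\theta i}$. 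The only point requiring a little care is the uniform-in-$i$ moment control, where stationarity of $Z^\theta$ and the Gaussianity of its one-dimensional marginals are used; the polynomial structure of $f_q$ is what makes the Taylor expansion terminate and keeps every derivative $f_q^{(k)}$ polynomial, hence with finite moments of all orders.
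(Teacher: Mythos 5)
Your proposal is correct and follows essentially the same route as the paper's proof: the paper also starts from the decomposition $X_{i}=Z_{i}^{\theta }-e^{-\theta i}Z_{0}^{\theta }$ and performs an exact finite expansion of the polynomial difference (via the binomial theorem applied term by term to the Hermite decomposition of $f_{q}$, rather than your equivalent Taylor expansion), isolating in each term a factor $e^{-\theta ij}$ with $j\geq 1$ multiplying a product of powers of $Z_{0}^{\theta }$ and $Z_{i}^{\theta }$ whose $L^{p}$ norm is bounded uniformly in $i$ by stationarity and Gaussianity. The conclusion then follows in both cases from the uniform boundedness of the geometric series $\sum_{i=0}^{n-1}e^{-\theta i}$, so your argument is a correct and slightly more streamlined packaging of the same idea.
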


\begin{proof}
By (\ref{decompoXfOU}) and (\ref{polynomial function}) we have
\begin{equation*}
\left\Vert Q_{f_{q},n}(X)-Q_{f_{q},n}(Z^{\theta })\right\Vert _{L^{p}(\Omega
)}\leqslant \frac{1}{n}\sum_{i=0}^{n-1}\sum_{k=0}^{q/2}d_{f_{q},2k}\left%
\Vert H_{2k}\left( \frac{X_{i}}{\sqrt{r_{Z}(0)}}\right) -H_{2k}\left( \frac{%
Z_{i}^{\theta }}{\sqrt{r_{Z}(0)}}\right) \right\Vert _{L^{p}(\Omega )}.
\end{equation*}%
Combining this and the fact that
\begin{equation*}
H_{2k}\left( \frac{X_{i}}{\sqrt{r_{Z}(0)}}\right) -H_{2k}\left( \frac{%
Z_{i}^{\theta }}{\sqrt{r_{Z}(0)}}\right) =\sum_{l=0}^{k}\frac{(2k)!(-1)^{l}}{%
l!(2k-2l)!2^{l}}\sum_{j=1}^{2k-2l}\frac{(-1)^{j}(_{j}^{2k-2l})e^{-\theta ij}%
}{r_{Z}^{k-l}(0)}(Z_{0}^{\theta })^{j}(Z_{i}^{\theta })^{2k-2l-j}.
\end{equation*}%
we deduce that there exist a constant $c(\theta ,f_{q})$ depending on $f_{q}$
and $\theta $ such that
\begin{equation*}
\left\Vert Q_{f_{q},n}(X)-Q_{f_{q},n}(Z^{\theta })\right\Vert _{L^{p}(\Omega
)}\leqslant c(\theta ,f_{q})\frac{1}{n}\sum_{i=0}^{n-1}e^{-i\theta }.
\end{equation*}%
Thus the lemma is obtained.
\end{proof}

As a consequence, by using $Z^{\theta }$ ergodic, Lemma \ref{asymptotic
R_{Q,q}} and Theorem \ref{consistency for Z+Y}, we conclude that
\begin{equation*}
Q_{f_{q},n}(X)\longrightarrow \lambda _{f_{q}}(Z^{\theta })
\end{equation*}%
almost surely as $n\rightarrow \infty $. Moreover, by the Gaussian property
of $Z^{\theta }$ and Lemma asymptotic \ref{hypotheses FOU} in the Appendix,
we can write
\begin{equation*}
\lambda _{f_{q}}(Z^{\theta }):=\mu _{f_{q}}(\theta )
\end{equation*}%
where $\mu _{f_{q}}$ is a univariate function of $\theta $ determined by the
polynomial $f_{q}$. Hence, in the case when the function $\mu _{f_{q}}$ is
invertible, we obtain the following estimator for $\theta $
\begin{equation}
\check{\theta}_{f_{q},n}:=\mu _{f_{q}}^{-1}\left[ Q_{f_{q},n}(X)\right] .
\label{estimator of FOU}
\end{equation}

\begin{proposition}
Assume $H\in \left( 0,1\right) $ and $\mu _{f_{q}}$ is a homomorphism. Let $%
\widehat{\theta }_{f_{q},n}$ be the estimator given in (\ref{estimator of
FOU}). Then, as $n\longrightarrow \infty $, almost surely, $\check{\theta}%
_{f_{q},n}\longrightarrow \theta $.
\end{proposition}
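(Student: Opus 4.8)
The plan is to derive the consistency of $\check{\theta}_{f_{q},n}$ directly from the almost-sure convergence of the polynomial variation $Q_{f_{q},n}(X)$, composed with the continuity of the inverse map $\mu _{f_{q}}^{-1}$. First I would recall that the required almost-sure convergence of $Q_{f_{q},n}(X)$ is already in hand from the discussion preceding the proposition: since $Z^{\theta }$ is an ergodic stationary Gaussian process, $Q_{f_{q},n}(Z^{\theta })$ converges almost surely to $\lambda _{f_{q}}(Z^{\theta })=E[f_{q}(Z_{0}^{\theta })]$ (this is also the content of Theorem \ref{consistency for Z}, whose decay hypothesis (\ref{condition of consistency}) holds for $Z^{\theta }$ for every $H\in (0,1)$). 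Lemma \ref{asymptotic R_{Q,q}} verifies hypothesis (\ref{hypothesis on Z+Y}) with $\gamma =1$, so Theorem \ref{consistency for Z+Y} upgrades this to
\[
Q_{f_{q},n}(X)\longrightarrow \lambda _{f_{q}}(Z^{\theta })\quad \text{almost surely as }n\rightarrow \infty .
\]

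Next I would identify the limit as $\mu _{f_{q}}(\theta )$, using the relation $\lambda _{f_{q}}(Z^{\theta })=\mu _{f_{q}}(\theta )$ recorded above (via Lemma \ref{hypotheses FOU}), in which $\mu _{f_{q}}$ is the univariate function of $\theta $ determined by the polynomial $f_{q}$. Finally, because $\mu _{f_{q}}$ is assumed to be a homeomorphism, its inverse $\mu _{f_{q}}^{-1}$ is continuous on the relevant range; applying a continuous function to an almost-surely convergent sequence preserves almost-sure convergence, so that on the probability-one event on which $Q_{f_{q},n}(X)\rightarrow \mu _{f_{q}}(\theta )$ one has
\[
\check{\theta}_{f_{q},n}=\mu _{f_{q}}^{-1}\left[ Q_{f_{q},n}(X)\right] \longrightarrow \mu _{f_{q}}^{-1}\left[ \mu _{f_{q}}(\theta )\right] =\theta ,
\]
which is exactly the claim.

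I do not expect any genuine obstacle here: all of the analytic work has already been carried out in Lemma \ref{asymptotic R_{Q,q}} (which controls the non-stationary correction $Y=-e^{-\theta t}Z_{0}^{\theta }$ in every $L^{p}(\Omega )$) and in Theorem \ref{consistency for Z+Y}, so the present statement merely packages those facts with the elementary observation that the inverse of a homeomorphism is continuous. The one point deserving a word of care is that it is the \emph{continuity} of $\mu _{f_{q}}^{-1}$, and not merely the invertibility of $\mu _{f_{q}}$, that permits the limit to be passed through the inverse; this continuity is precisely what the homeomorphism hypothesis provides, and it is the sole role played by that hypothesis in the argument.
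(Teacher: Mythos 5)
Your proposal is correct and follows exactly the route the paper intends: the almost-sure convergence $Q_{f_{q},n}(X)\rightarrow \lambda _{f_{q}}(Z^{\theta })=\mu _{f_{q}}(\theta )$ established in the discussion preceding the proposition (via Lemma \ref{asymptotic R_{Q,q}} and Theorem \ref{consistency for Z+Y}, or equivalently ergodicity of $Z^{\theta }$), followed by continuity of $\mu _{f_{q}}^{-1}$. The paper leaves this proof implicit in that discussion, so your write-up matches its argument in substance and adds nothing that departs from it.
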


These considerations allow us to state and prove the following strong
consistency and asymptotic normality of $\check{\theta}_{f_{q},n}$ .

\begin{proposition}
\label{asymptotic distribution of Q_{q,n}(FOU)} Denote $N\sim \mathcal{N}%
(0,1)$. Then

\begin{itemize}
\item If $H\in (0,\frac{5}{8})$, for any $q$,%
\begin{equation*}
d_{W}\left( \frac{U_{f_{q},n}(X)}{\sqrt{u_{f_{q}}(Z^{\theta })}},N\right)
\leqslant \frac{C}{n^{1/4}}.
\end{equation*}

\item If $H\in (\frac{5}{8},\frac{3}{4})$, for any $q$,%
\begin{equation*}
d_{W}\left( \frac{U_{f_{q},n}(X)}{\sqrt{u_{f_{q}}(Z^{\theta })}},N\right)
\leqslant \frac{C}{n^{(4H-3)/2}}.
\end{equation*}

\item In particular, in both cases, assuming $\mu _{f_{q}}$ is a
diffeomorphism,%
\begin{equation*}
\sqrt{n}\left( \check{\theta}_{f_{q},n}-\theta \right) \overset{law}{%
\longrightarrow }\mathcal{N}\left( 0,\frac{u_{f_{q}}(Z^{\theta })}{(\mu
_{f_{q}}^{\prime }(\theta ))^{2}}\right) .
\end{equation*}

\item If $H=\frac{3}{4}$,
\begin{equation*}
d_{W}\left( \frac{U_{f_{q},n}(X)}{\sqrt{E\left[ U_{f_{q},n}^{2}(Z^{\theta })%
\right] }},N\right) \leqslant C\log ^{-\frac{1}{4}}(n).
\end{equation*}%
In particular,
\begin{equation*}
\sqrt{\frac{n}{\log (n)}}\left( \check{\theta}_{f_{q},n}-\theta \right)
\overset{law}{\longrightarrow }\mathcal{N}\left( 0,\frac{9d_{q,2}^{2}(Z)}{%
16\theta ^{4}(\mu _{f_{q}}^{\prime }(\theta ))^{2}}\right) .
\end{equation*}
\end{itemize}
\end{proposition}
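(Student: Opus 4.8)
The plan is to view $X$ as the non-stationary sequence $Z+Y$ of Section \ref{NONSTAT}, with $Z=Z^{\theta}$ the stationary solution and $Y=-e^{-\theta t}Z_{0}^{\theta}$, and to feed the autocorrelation asymptotics of $Z^{\theta}$ into the quantitative bounds of Theorem \ref{CLT for Z+Y}, Corollary \ref{CLTCor}, and Theorem \ref{Berryesseentheta}. First I would record the classical fact (recorded in the Appendix, cf. Lemma \ref{hypotheses FOU}) that $r_{Z^{\theta}}(k)\sim H(2H-1)\theta^{-2}k^{2H-2}$ as $k\to\infty$, so that in the notation of Section \ref{IRC} the decay exponent is $\alpha=2-2H$. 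By Lemma \ref{asymptotic R_{Q,q}}, hypothesis (\ref{hypothesis on Z+Y}) holds with $\gamma=1$, so the non-stationary contribution to every Wasserstein bound below is $\mathcal{O}(n^{-1/2})$.

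For $H<3/4$ one has $\sum_{j}r_{Z^{\theta}}(j)^{2}<\infty$, i.e. Condition (\ref{BM}) holds and $u_{f_{q}}(Z^{\theta})<\infty$ by Lemma \ref{uZlemma}; the variance-discrepancy term $\sum_{|j|>n}r_{Z^{\theta}}(j)^{2}$ appearing in (\ref{d_{TV} for Z+Y with finite serie}) is then of order $n^{4H-3}$. The core normal-convergence term is governed by $\kappa_{4}(F_{f_{2},n}(Z^{\theta}))$ through the second estimate of Theorem \ref{CLT}. Inserting $|r_{Z^{\theta}}(j)|^{4/3}\sim c\,j^{(8H-8)/3}$: when $H<5/8$ the series $\sum_{|j|<n}|r_{Z^{\theta}}(j)|^{4/3}$ converges, whence $\kappa_{4}=\mathcal{O}(n^{-1})$ and the leading term is $\kappa_{4}^{1/4}=\mathcal{O}(n^{-1/4})$; when $5/8<H<3/4$ the same series grows like $n^{(8H-5)/3}$, so $\kappa_{4}=\mathcal{O}(n^{8H-6})$ and $\kappa_{4}^{1/4}=\mathcal{O}(n^{(4H-3)/2})$, the rate of the second bullet point. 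In both ranges the comparisons $4H-3<-1/4$ (for $H<5/8$) and $4H-3<(4H-3)/2<0$ (for $5/8<H<3/4$), together with $\tfrac12-\gamma=-\tfrac12$, show that the non-stationarity and variance-discrepancy terms are dominated by the stated rate, so Theorem \ref{CLT for Z+Y} delivers the first two bullet points.

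At the critical value $H=3/4$ Condition (\ref{BM}) fails, so I would normalize by $\sqrt{E[U_{f_{q},n}^{2}(Z^{\theta})]}$ and invoke the first estimate of Theorem \ref{CLT for Z+Y} together with part (1) of Corollary \ref{CLTCor}: since $r_{Z^{\theta}}(k)\sim ck^{-1/2}$ with $c=3\theta^{-2}/8$, one has $E[U_{f_{q},n}^{2}(Z^{\theta})]\sim 4c^{2}d_{q,2}^{2}(Z^{\theta})\log n$ by (\ref{logequiv}) and $\kappa_{4}(F_{f_{2},n}(Z^{\theta}))=\mathcal{O}(\log^{-2}n)$ by (\ref{logkappa4}); the non-stationary term $n^{-1/2}/\sqrt{\log n}$ is negligible, and inserting the cumulant estimate into the square-root bound yields the logarithmic rate of the last bullet. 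The in-law statements for $\check{\theta}_{f_{q},n}=\mu_{f_{q}}^{-1}(Q_{f_{q},n}(X))$ then follow from Theorem \ref{Berryesseentheta} applied with $g=\mu_{f_{q}}^{-1}$, so that $g'(\theta^{\ast})=1/\mu_{f_{q}}'(\theta)$ and the limiting variance is $u_{f_{q}}(Z^{\theta})/(\mu_{f_{q}}'(\theta))^{2}$; at $H=3/4$ the normalization $\sqrt{E[U^{2}]}\sim 2|c|\,|d_{q,2}(Z^{\theta})|\sqrt{\log n}$ together with $c=3\theta^{-2}/8$ produces exactly the constant $9d_{q,2}^{2}(Z^{\theta})/(16\theta^{4}(\mu_{f_{q}}'(\theta))^{2})$.

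I expect the main obstacle to be verifying the hypothesis of Theorem \ref{Berryesseentheta}, namely that $g''(\widehat{\theta}_{n})=(\mu_{f_{q}}^{-1})''(Q_{f_{q},n}(X))$ has a moment of order strictly greater than $1$, bounded uniformly in $n$. Because $\mu_{f_{q}}$ is assumed a diffeomorphism and $Q_{f_{q},n}(X)\to\mu_{f_{q}}(\theta)$ almost surely to an interior point, $(\mu_{f_{q}}^{-1})''$ is continuous and bounded on a neighborhood of the limit; the remaining work is to control the contribution of the rare excursions of $Q_{f_{q},n}(X)$ away from that neighborhood, which can be handled via the uniform $L^{p}$ bounds coming from hypercontractivity (\ref{hypercontractivity}) and Lemma \ref{asymptotic R_{Q,q}}. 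The only other genuinely non-mechanical ingredient is the exact leading constant $c=3\theta^{-2}/8$ in the $H=3/4$ autocorrelation asymptotics, needed to pin down the explicit variance in the last display; everything else is the bookkeeping of inserting power and logarithmic decay rates into the bounds already established in Sections \ref{AD} and \ref{NONSTAT}.
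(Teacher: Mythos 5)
Your treatment of the quantitative bounds is essentially the paper's own proof: the same decomposition $X=Z^{\theta}+Y$ with $\gamma=1$ from Lemma \ref{asymptotic R_{Q,q}}, the same insertion of $r_{Z^{\theta}}(k)\sim H(2H-1)\theta^{-2}k^{2H-2}$ into the $\kappa_{4}$ and variance-discrepancy terms of Theorem \ref{CLT for Z+Y}, the same exponent comparisons separating $H<5/8$ from $5/8<H<3/4$, and the same use of Corollary \ref{CLTCor} part (1) with $c=3\theta^{-2}/8$ at $H=3/4$; all of these computations check out. The one genuine divergence is the convergence-in-law bullets: you route them through Theorem \ref{Berryesseentheta}, which forces you to verify that $(\mu_{f_{q}}^{-1})''(Q_{f_{q},n}(X))$ has a uniformly bounded moment of order greater than $1$ --- a condition you correctly flag as the main obstacle, but which is not implied by the proposition's sole hypothesis that $\mu_{f_{q}}$ is a diffeomorphism (and which Section \ref{FOUEx} shows can genuinely fail to be trivial, e.g.\ for $\phi_{q}$ where the inverse has a singularity). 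The paper avoids this entirely: since only convergence in law (not a rate) is claimed, it writes $\sqrt{n}\left( \mu_{f_{q}}(\widehat{\theta}_{f_{q},n})-\mu_{f_{q}}(\theta )\right) =\mu_{f_{q}}^{\prime }(\xi_{n})\sqrt{n}\left( \widehat{\theta}_{f_{q},n}-\theta \right)$ by the mean value theorem and concludes by Slutsky from the normal convergence of $U_{f_{q},n}(X)$ and the almost-sure convergence $\mu_{f_{q}}^{\prime }(\xi_{n})\rightarrow \mu_{f_{q}}^{\prime }(\theta )\neq 0$. Your approach buys a Wasserstein rate for $\check{\theta}_{f_{q},n}$ (which the paper only establishes later, in Proposition \ref{BerryHermiteThetaProp} and Section \ref{OBEQ}, for specific choices of $f_{q}$ where the moment condition is checked), but at the stated level of generality the soft delta-method argument is both sufficient and necessary to match the hypotheses; as written, your proof of the third and fifth displays carries an unverified assumption.
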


\begin{proof}
In this proof, $C$ represents a constant which may change from line to line.
It was proved in \cite{NV2014} (also see \cite{BBNP}) that, with $r_{Z}$ the
covariance function of $Z^{\theta }$,
\begin{equation*}
\kappa _{4}(U_{f_{2},n}(Z^{\theta }))\leqslant C\left( \sum_{\left\vert
k\right\vert <n}\left\vert r_{Z}\left( k\right) \right\vert ^{4/3}\right)
^{3}
\end{equation*}%
while
\begin{equation*}
E\left[ U_{f_{q},n}^{2}(Z^{\theta })\right] \leqslant C\sum_{\left\vert
k\right\vert <n}\left\vert r_{Z}\left( k\right) \right\vert ^{2}
\end{equation*}%
Then by Lemma \ref{hypotheses FOU}, for all $H<3/4$, we easily get $E\left[
U_{f_{q},n}^{2}(Z^{\theta })\right] \leqslant u_{f_{q}}(Z^{\theta })<\infty $
and in particular%
\begin{equation}
\left\vert 1-\frac{E\left[ U_{f_{q},n}^{2}(Z^{\theta })\right] }{%
u_{f_{q}}(Z^{\theta })}\right\vert \leqslant C\sum_{\left\vert k\right\vert
>n}\left\vert r_{Z}\left( k\right) \right\vert ^{2}\leqslant Cn^{4H-3}.
\label{u-U for fOU}
\end{equation}%
Also by Lemma \ref{hypotheses FOU}, for $H<5/8$,%
\begin{equation*}
\kappa _{4}(U_{f_{2},n}(Z^{\theta }))\leqslant Cn^{-1}
\end{equation*}%
while for $H>5/8$,%
\begin{equation*}
\kappa _{4}(U_{f_{2},n}(Z^{\theta }))\leqslant Cn^{2\left( 4H-3\right) }.
\end{equation*}%
Then by Theorem \ref{CLT for Z+Y} and by Lemma \ref{asymptotic R_{Q,q}}
which shows that $\gamma =1$, we get%
\begin{equation*}
d_{W}\left( \frac{U_{f_{q},n}(Z+Y)}{\sqrt{u_{f_{q}}(Z)}},N\right) \leqslant
Cn^{-1/2}+\sqrt[4]{\kappa _{4}(U_{f_{2},n}(Z^{\theta }))}+Cn^{4H-3}
\end{equation*}%
depending on whether $H$ is larger or smaller than $5/8$ we get the
announced result, since $n^{-1/2}$ and $n^{\left( 4H-3\right) /2}$ coming
from dominate the term $\sqrt[4]{\kappa _{4}(U_{f_{2},n}(Z^{\theta }))}$
dominate the error terms $n^{4H-3}$and $n^{-1/2}$.

Now, by assumption, $\mu _{f_{q}}$ has a continuously differentiable
derivative. Thus, by the mean value theorem, there exists a random variable $%
\xi _{f_{q},n}$ between $\theta $ and $\widehat{\theta }_{f_{q},n}$ such
that
\begin{equation*}
\sqrt{n}\left( \mu _{f_{q}}(\widehat{\theta }_{q,n})-\mu _{f_{q}}(\theta
)\right) =\mu _{f_{q}}^{\prime }(\xi _{f_{q},n})\sqrt{n}\left( \widehat{%
\theta }_{q,n}-\theta \right) .
\end{equation*}%
By the normal convergence in law of $\mu _{f_{q}}(\widehat{\theta }_{q,n})$
to $\mu _{f_{q}}(\theta )$ and the almost-sure convergence of $\mu
_{f_{q}}^{\prime }(\xi _{f_{q},n})$ to $\mu _{f_{q}}^{\prime }\left( \theta
\right) $, the theorem's final statement when $H<3/4$ follows. The special
case of $H=3/4$ is treated similarly.
\end{proof}

\subsection{Examples and a Berry-Esséen theorem for drift estimators\label%
{FOUEx}}

In the two following examples, the function $\mu _{f_{q}}$ is an explicit
diffeomorphism except at $\theta =0$.

\begin{itemize}
\item Assume that $f_{q}=H_{q}$. Using (\ref{alphaHqcase}) and Lemma \ref%
{hypotheses FOU}, we have
\begin{equation*}
\mu _{H_{q}}(\theta )=\lambda _{H_{q}}(Z^{\theta })=\frac{q!}{(\frac{q}{2}%
)!2^{q/2}}\left( H\Gamma (2H)\theta ^{-2H}-1\right) ^{q/2}.
\end{equation*}%
In this case, the function $\mu $ is a diffeomorphism with bounded
derivatives when the range is restricted to $\mathbf{R}_{+}$. Since, by the
previous strong consistency proposition, $Q_{f_{q},n}(X)$ ends up in $%
\mathbf{R}_{+}$ almost surely, the estimator $\check{\theta}_{f_{q},n}$ is
asymptotically equivalent to the one in which the function $g=\mu
_{f_{q}}^{-1}$ is restricted to $\mathbf{R}_{+}$. This observation will be
helpful below when applying the results of Section \ref{Towards}.

\item Assume that $f_{q}=\phi _{q}$ with $\phi _{q}(x)=x^{q}$. From (\ref%
{alphaPhicase}) and Lemma \ref{hypotheses FOU} we obtain
\begin{equation*}
\mu _{\phi _{q}}(\theta )=\lambda _{\phi _{q}}(Z^{\theta })=\frac{q!}{(\frac{%
q}{2})!2^{q/2}}\left[ H\Gamma (2H)\theta ^{-2H}\right] ^{q/2}.
\end{equation*}%
The singularity of $\mu $ at $\theta =0$ poses some technical problems when
one tries to translate the consistency of $Q_{f_{q},n}(X)$ into that of $%
\check{\theta}_{f_{q},n}$ thanks to Section \ref{Towards}, which we
investigate below.
\end{itemize}

We now show how the principle described in Section \ref{Towards} can be used
to estimate the speed of convergence for the estimator $\check{\theta}%
_{f_{q},n}$ itself. To work in a specific situation, we look at the above
two examples, assuming $q=2$.

\subsubsection{Berry-Esséen theorem for a Hermite-variations-based estimator
for $\protect\theta $}

In the notation of Section \ref{Towards}, using the convention of replacing $%
Q_{H_{2},n}\left( Z\right) $ by $\left\vert Q_{H_{2},n}\left( Z\right)
\right\vert $, in the case of the Hermite polynomial $H_{2}$ we have%
\begin{equation*}
\mu _{H_{2}}(\theta )=\lambda _{H_{2}}(Z^{\theta })=H\Gamma (2H)\left\vert
\theta \right\vert ^{-2H}-1=g^{-1}\left( \theta \right)
\end{equation*}%
and thus
\begin{equation}
g\left( x\right) =g_{H_{2}}\left( x\right) :=\left( H\Gamma (2H)\right)
^{-1/(2H)}\left( 1+\left\vert x\right\vert \right) ^{-1/(2H)},  \label{gee}
\end{equation}%
and $g^{\prime \prime }\left( x\right) $ is proportional to $\left(
1+\left\vert x\right\vert \right) ^{-1/(2H)-2}$. This function is bounded on
$\mathbf{R}_{+}$. Hence, according to Theorem \ref{Berryesseentheta}, using
the speed of convergence from Proposition \ref{CLT for quadratic var}, we
obtain the following.

\begin{proposition}
\label{BerryHermiteThetaProp}For the stationary fractional
Ornstein-Uhlenbeck $Z^{\theta }$ in (\ref{Ztheta}), with $Q_{H_{2},n}\left(
Z^{\theta }\right) =\frac{1}{n}\sum_{k=1}^{n}Z^{\theta }\left( k\right)
^{2}-1$ and $g$ as in (\ref{gee})$,$ we get
\begin{equation*}
d_{W}\left( \frac{\sqrt{n}}{g^{\prime }(\theta ^{\ast })\sqrt{E\left[
U_{H_{2},n}^{2}(Z)\right] }}\left( g\left( Q_{H_{2},n}\left( Z^{\theta
}\right) \right) -\theta \right) ,\mathcal{N}(0,1)\right) \leqslant C\frac{1%
}{\sqrt{n}}+\varphi \left( n\right)
\end{equation*}%
where $g\left( x\right) =\left( H\Gamma (2H)\right) ^{-1/(2H)}\left(
1+\left\vert x\right\vert \right) ^{-1/(2H)}$ and $\theta ^{\ast
}=g^{-1}\left( \theta \right) $ and%
\begin{equation*}
\varphi \left( n\right) \asymp \frac{\left( \sum_{|k|<n}\left\vert
k\right\vert ^{3H-3}\right) ^{2}}{\left( \sum_{|k|<n}\left\vert k\right\vert
^{4H-4}\right) ^{3/2}\sqrt{n}}.
\end{equation*}%
In particular, for $H<2/3$, $\varphi \left( n\right) \asymp 1/\sqrt{n}$.
\end{proposition}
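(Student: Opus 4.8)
The plan is to apply the general transfer principle of Theorem \ref{Berryesseentheta} with $f_q=H_2$ and with $g$ the explicit inverse recorded in (\ref{gee}), and then to identify the residual speed $\varphi(n)$ with the \emph{sharp} quadratic-case rate furnished by Proposition \ref{CLT for quadratic var}, rather than the generic $n^{-1/4}$ of Corollary \ref{CLTCor2}. The only genuine verification is that the hypotheses of Theorem \ref{Berryesseentheta} hold for this $g$ and for the fOU covariance. First I would restrict attention to $\mathbf{R}_+$ via the convention $Q_{H_2,n}\mapsto |Q_{H_2,n}|$, which is justified by the preceding strong-consistency proposition: since $Q_{H_2,n}(Z^\theta)$ converges a.s.\ to $\lambda_{H_2}(Z^\theta)$ and the estimator is confined to $\mathbf{R}_+$ eventually, no singularity of $g$ at the origin is ever encountered. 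On $\mathbf{R}_+$ the function $g(x)=(H\Gamma(2H))^{-1/(2H)}(1+x)^{-1/(2H)}$ is smooth and strictly monotone, hence a diffeomorphism onto its image with $g'(\theta^\ast)\neq 0$. Its second derivative is proportional to $(1+x)^{-1/(2H)-2}$, which is bounded on $\mathbf{R}_+$; consequently $g''(\widehat{\theta}_n)$ is uniformly bounded and certainly possesses a bounded moment of order exceeding $1$, which is exactly the condition demanded by Theorem \ref{Berryesseentheta}.

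With these hypotheses in hand, Theorem \ref{Berryesseentheta} yields the bound $C/\sqrt{n}+\varphi(n)$, where $\varphi(n)$ is the Wasserstein speed governing the underlying quadratic estimator. To pin this speed down through part (2) of Proposition \ref{CLT for quadratic var}, I would check that the covariance $r_Z$ of the stationary fOU $Z^\theta$ is asymptotically of constant sign and monotone, together with its power-law asymptotics $r_Z(k)\sim c|k|^{2H-2}$; both facts are supplied by Lemma \ref{hypotheses FOU}. Part (2) then gives $\varphi(n)\asymp|\kappa_3(F_{f_2,n}(Z^\theta))|\asymp \left(\sum_{|k|<n}|r_Z(k)|^{3/2}\right)^2\big/\left[\left(\sum_{|k|<n}|r_Z(k)|^2\right)^{3/2}\sqrt{n}\right]$. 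Substituting $|r_Z(k)|^{3/2}\asymp|k|^{3H-3}$ and $|r_Z(k)|^2\asymp|k|^{4H-4}$ produces the displayed formula for $\varphi(n)$.

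The closing step is the elementary summation analysis behind the ``in particular'' claim. The numerator series $\sum_k|k|^{3H-3}$ converges precisely when $3-3H>1$, i.e.\ $H<2/3$, while the denominator series $\sum_k|k|^{4H-4}$ converges precisely when $4-4H>1$, i.e.\ $H<3/4$. Hence for $H<2/3$ both sums are trapped between positive constants, so $\varphi(n)\asymp 1/\sqrt{n}$ and the overall bound collapses to the Berry-Ess\'en rate $C/\sqrt{n}$. I do not anticipate a serious obstacle here: the potential difficulty is the bookkeeping around the behaviour of $g$ near the origin, but because $g$ is built from $(1+|x|)$ and because the estimator lives in $\mathbf{R}_+$ asymptotically, the moment condition on $g''$ is verified by boundedness alone, and the remainder is routine substitution and comparison of convergent versus divergent $p$-series.
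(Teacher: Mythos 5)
Your proposal is correct and follows essentially the same route as the paper: the paper's own (very brief) argument consists precisely of noting that $g''(x)\propto(1+|x|)^{-1/(2H)-2}$ is bounded on $\mathbf{R}_+$ (so the moment hypothesis of Theorem \ref{Berryesseentheta} holds trivially), invoking that theorem, and identifying $\varphi(n)$ with the sharp quadratic-case rate of Proposition \ref{CLT for quadratic var} via the fOU covariance asymptotics $r_Z(k)\sim c|k|^{2H-2}$ from Lemma \ref{hypotheses FOU}. Your additional bookkeeping (the restriction to $\mathbf{R}_+$ via the absolute-value convention, the constant-sign/monotonicity check, and the $p$-series comparison giving $H<2/3$) simply fills in details the paper leaves implicit.
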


\begin{remark}
Improvements to the above proposition which include the use of the
asymptotic variance $u_{f_{q}}(Z^{\theta })$ and the nonstationary
fractional OU process $X$ in (\ref{fOUX}) also hold. These are omitted here
for the sake of conciseness; the reader will find these topics covered in
Section \ref{OBEQ} below.
\end{remark}

\subsubsection{Comments and strategy for $\protect\theta $ estimators with
singular variance function}

For the power-2 function, in the notation of Section \ref{Towards} we have%
\begin{equation*}
\mu _{\phi _{2}}(\theta )=\lambda _{\phi _{2}}(Z^{\theta })=H\Gamma
(2H)\theta ^{-2H}=g^{-1}\left( \theta \right)
\end{equation*}%
and thus $g^{\prime \prime }\left( x\right) =c_{H}x^{-1/(2H)-2}$ which has a
singularity at $0$ and thus is not bounded. A result can be obtained
immediately from the above proposition since $g\left( Q_{\phi _{2},n}\left(
Z^{\theta }\right) -1\right) =g\left( Q_{H_{2},n}\left( Z^{\theta }\right)
\right) $ is the estimator studied in that proposition. However, for
illustrative purposes, we finish this section by outlining a method for
dealing with the singularity, since this works for any $g$ such that $%
g^{\prime \prime }$ is asymptotically decreasing like a negative power, and
any process that has an infinite Karhunen-Loève expansion.

According to Theorem \ref{Berryesseentheta}, we are asking whether for some $%
p^{\prime }>1$,
\begin{equation*}
\sup_{n}E\left[ \left\vert \widehat{\theta }_{n}\right\vert ^{-p^{\prime
}/(2H)-2p^{\prime }}\right] <\infty .
\end{equation*}%
This condition is not entirely trivial, and can fail in some simple
pathologically degenerate cases such as if $Z$ is constant, since then $%
\widehat{\theta }_{n}=n^{-1}\sum_{k=1}^{n}Z\left( k\right) ^{2}=Z\left(
0\right) ^{2}$ is a chi-squared variable with one degree of freedom, which
has no moments of negative order less than $-1/2$. This pathology does not
occur for the fOU process, though the argument is slightly involved, since
the limit law of the renormalized $\widehat{\theta }_{n}$ is normal, which
does not have higher negative moments either. We decompose%
\begin{equation*}
E\left[ g^{\prime \prime }\left( \widehat{\theta }_{n}\right) \right] =E%
\left[ g^{\prime \prime }\left( \widehat{\theta }_{n}\right) \mathbf{1}_{%
\widehat{\theta }_{n}<1/\sqrt{n}}\right] +E\left[ g^{\prime \prime }\left(
\widehat{\theta }_{n}\right) \mathbf{1}_{\widehat{\theta }_{n}\geq 1/\sqrt{n}%
}\right] .
\end{equation*}%
By the asymptotic normality of $\left( \widehat{\theta }_{n}-\theta ^{\ast
}\right) \sqrt{n}$, we get%
\begin{eqnarray*}
&&E\left[ g^{\prime \prime }\left( \widehat{\theta }_{n}\right) \mathbf{1}%
_{\left\vert \widehat{\theta }_{n}\right\vert \geq 1/\sqrt{n}}\right] \sim
\int_{-\sqrt{n}\theta ^{\ast }+1}^{\infty }\left\vert \frac{z}{\sqrt{n}}%
+\theta ^{\ast }\right\vert ^{-p^{\prime }\left( 2+1/(2H)\right)
}e^{-z^{2}/2}dz \\
&=&\int_{-\sqrt{n}\theta ^{\ast }+1}^{-\sqrt{n}\theta ^{\ast }/2}\left\vert
\frac{z}{\sqrt{n}}+\theta ^{\ast }\right\vert ^{-p^{\prime }\left(
2+1/(2H)\right) }e^{-z^{2}/2}dz+\int_{-\sqrt{n}\theta ^{\ast }/2}^{\infty
}\left\vert \frac{z}{\sqrt{n}}+\theta ^{\ast }\right\vert ^{-p^{\prime
}\left( 2+1/(2H)\right) }e^{-z^{2}/2}dz \\
&\leqslant &cst~e^{-n\theta ^{\ast }/8}n^{p^{\prime }\left( 2+1/(2H)\right)
}+\left( \theta ^{\ast }/2\right) ^{-p^{\prime }\left( 2+1/(2H)\right) }
\end{eqnarray*}%
which is bounded for all $n$.

For the second piece, the normal approximation would not yield a finite
bound, thus we must return to the original expression of $\widehat{\theta }%
_{n}$ as a 2nd chaos variable. It is known (see \cite[page 522]{LP}) that $%
Z^{\theta }\left( k\right) $ has a Kahunen-Loève expansion $%
\sum_{m=0}^{\infty }\sqrt{\lambda _{m}}e_{m}\left( k\right) W_{m}$ (where
the $W_{m}$ are i.i.d. standard normal, and the $e_{m}$ are orthonormal in $%
L^{2}\left( [0,n]\right) $ ) such that $\lambda _{m}\sim cm^{2H-2}$. Thus,
the expansion of $Z^{\theta }$ contains infinitely many independent terms.
One also knows (see \cite[Section 2.7.4]{NP-book}) that $\hat{\theta}_{n}$,
like any variable in the second chaos, can be expanded as $%
\sum_{m=0}^{\infty }\mu _{m}W_{m}^{2}$ where the $\mu _{m}$ are summable.
One can check that the infinity of distinct terms in the expansion of $Z^{y}$
implies that for any fixed $n$, the expansion of $\hat{\theta}_{n}$ also
contains infinitely many terms, and that the coefficients are positive.
Therefore, for any fixed $m_{0}$, there exists a positive constant $c_{m_{0}}
$ such that $\hat{\theta}_{n}\geq
c_{m_{0}}\sum_{m=0}^{m_{0}}W_{m}^{2}=:S_{m_{0}}$, which is a random variable
with $\chi ^{2}$ distribution with $m_{0}$ degrees of freedom. Hence the
density of $S$ at the origin is of the order $z^{(m_{0}-1)/2}$, which means
it has a negative moment of order $-p^{\prime }\left( 2+1/(2H)\right) $ as
soon as $m_{0}>p^{\prime }\left( 2+1/(2H)\right) -1$. From this it follows
that $E\left[ g^{\prime \prime }\left( \widehat{\theta }_{n}\right) \mathbf{1%
}_{\widehat{\theta }_{n}<1/\sqrt{n}}\right] $ is bounded. Thus $g$ and $\hat{%
\theta}$ comply with the conditions of Theorem \ref{Berryesseentheta}. In
fact, since $p^{\prime }$ can be taken arbitrarily close to $1$, we only
need to be able to choose $m_{0}>1+1/(2H)$. For instance, if $H>1/2$, this
means that for Theorem \ref{Berryesseentheta} to work with $q=2$, one only
needs to show that the second-chaos series decomposition of $\widehat{\theta
}_{n}$ contains $2$ independent terms. This covers all Gaussian processes
except for the trivial case of the constant process.

\subsection{Optimal Berry-Esséen theorem in the quadratic case\label{OBEQ}}

As in the previous sections, the convergence speed for general $q$ has no
reason to be optimal. We illustrate this by studying the case $q=2$, where
we can improve the rate convergence thanks to the optimal rates obtained in
Section \ref{QC}, and even obtain optimal two-sided bounds when $H<5/8$.
Note that the results in this section deal with the fully realistic scenario
where observations come from the non-stationary process $X$ in (\ref{fOUX})
and there is no reference to normalizing constants other than finite
asymptotic variances.

\subsubsection{Setting up the rates of convergence}

First assume that $H\leqslant 3/4$. We find that $\kappa
_{3}(F_{f_{2},n}(Z^{\theta }))\longrightarrow 0$ and more precisely (see
\cite{NV2014}) that%
\begin{equation}
\left\vert E\left( (F_{f_{2},n}(Z^{\theta }))^{3}\right) \right\vert \asymp
\frac{\left( \sum_{|k|<n}|r_{Z}(k)|^{3/2}\right) ^{2}}{\left(
\sum_{|k|<n}|r_{Z}(k)|^{2}\right) ^{3/2}\sqrt{n}}\leqslant C\times \left\{
\begin{aligned} n^{-\frac12},\quad \mbox{if } 0<H<\frac23\\ \log^2(n)n^{-\frac12},\quad \mbox{if
} H=\frac23\\ n^{6H-\frac92},\quad \mbox{if } \frac23<H<\frac34\\ \log^{-3/2}(n),\quad \mbox{if } H=\frac34. \end{aligned}%
\right.  \label{exact rates}
\end{equation}%
By using Corollary \ref{cor CLT for quadratic var}, we see that we must
compare the rates therein to the rates obtained in (\ref{exact rates}). By (%
\ref{u-U for fOU}) the rate which controls the convergence of the variances
is $n^{4H-3}$. This can be dominated by $1/\sqrt{n}$ if and only if $n<5/8$.
For $H\in \lbrack 2/3,3/4)$, $n^{4H-3}$ dominates the rates in (\ref{exact
rates}). The rate which controls the non-stationarity term is always of
order $1/\sqrt{n}$ because of Lemma \ref{asymptotic R_{Q,q}}, which is
always the lowest-order term. Hence the improved rates in (\ref{exact rates}%
) only come into play when $H<5/8$ when normalizing by the asymptotic
variance. In other words, we have the following two estimates, where the
second one avoids the use of non-empirical statistics.

\begin{proposition}
If $H\in (0,\frac{3}{4}]$,
\begin{equation*}
d_{W}\left( \frac{U_{f_{2},n}(X)}{\sqrt{E\left[ U_{f_{2},n}^{2}(Z^{\theta })%
\right] }},N\right) \leqslant C\times \left\{
\begin{aligned} n^{-\frac12},\quad \mbox{if } 0<H<\frac23\\ \log^2(n)n^{-\frac12},\quad \mbox{if
} H=\frac23\\ n^{6H-\frac92},\quad \mbox{if } \frac23<H<\frac34\\ \log^{-3/2}(n),\quad \mbox{if } H=\frac34 \end{aligned}%
\right.
\end{equation*}%
and
\begin{equation*}
d_{W}\left( U_{f_{2},n}(X),\sqrt{u_{f_{2}}(Z^{\theta })}N\right) \leqslant
C\times \left\{ \begin{aligned} n^{-\frac12},\quad \mbox{if } 0<H<\frac58\\
n^{4H-3},\quad \mbox{if } \frac58\leqslant H<\frac34. \end{aligned}\right.
\end{equation*}
\end{proposition}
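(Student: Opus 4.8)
The plan is to read both bounds off Corollary \ref{cor CLT for quadratic var}, applied with $Z=Z^{\theta }$ and $Y=-e^{-\theta t}Z_{0}^{\theta }$, and then to determine, in each range of $H$, which of the competing error terms dominates. First I record the ingredients. By Lemma \ref{asymptotic R_{Q,q}}, hypothesis (\ref{hypothesis on Z+Y}) holds for every $p\geq 1$ with $\gamma =1$, so the non-stationarity term $n^{1/2-\gamma }$ in Corollary \ref{cor CLT for quadratic var} is exactly $n^{-1/2}$. By Lemma \ref{hypotheses FOU}, the covariance $r_{Z^{\theta }}$ is asymptotically of constant sign and monotone, so the assumptions of part (2) of Proposition \ref{CLT for quadratic var} are met; in particular $\kappa _{4}(F_{f_{2},n}(Z^{\theta }))\rightarrow 0$ for every $H\leqslant 3/4$, and the third-cumulant rates (\ref{exact rates}) are available. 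Finally, by (\ref{u-U for fOU}) the variance-discrepancy tail satisfies $\sum_{|j|>n}r_{Z}(j)^{2}\leqslant Cn^{4H-3}$.

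For the first bound I apply the first inequality of Corollary \ref{cor CLT for quadratic var}, which for $H\leqslant 3/4$ gives
\begin{equation*}
d_{W}\left( \frac{U_{f_{2},n}(X)}{\sqrt{E\left[ U_{f_{2},n}^{2}(Z^{\theta })\right] }},N\right) \leqslant C\left( \frac{n^{-1/2}}{\sqrt{E\left[ U_{f_{2},n}^{2}(Z^{\theta })\right] }}+\left\vert E\left( (F_{f_{2},n}(Z^{\theta }))^{3}\right) \right\vert \right) .
\end{equation*}
Note that because we normalize by the empirical $\sqrt{E[U_{f_{2},n}^{2}(Z^{\theta })]}$, no variance-discrepancy term appears here. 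For $H<3/4$, Condition (\ref{BM}) holds and $E[U_{f_{2},n}^{2}(Z^{\theta })]$ is bounded below by a positive constant, so the first term is $O(n^{-1/2})$; for $H=3/4$ one has $E[U_{f_{2},n}^{2}(Z^{\theta })]\asymp \log n$ by (\ref{logequiv}), so the first term is $O(n^{-1/2}(\log n)^{-1/2})$. In every case the first term is negligible against the second, and substituting the rates (\ref{exact rates}) for $|E((F_{f_{2},n}(Z^{\theta }))^{3})|$ yields the four stated cases.

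For the second bound I use the ``in addition'' inequality of Corollary \ref{cor CLT for quadratic var}, valid under Condition (\ref{BM}), hence for $H<3/4$. Since the laws involved have densities and $d_{W}(aV,aN)=a\,d_{W}(V,N)$ for $a>0$, the left-hand side $d_{W}(U_{f_{2},n}(X),\sqrt{u_{f_{2}}(Z^{\theta })}\,N)$ equals $\sqrt{u_{f_{2}}(Z^{\theta })}$ times $d_{W}(U_{f_{2},n}(X)/\sqrt{u_{f_{2}}(Z^{\theta })},N)$, so the constant $\sqrt{u_{f_{2}}(Z^{\theta })}$ is absorbed into $C$. This leaves three competing terms: the non-stationarity term $O(n^{-1/2})$, the third-cumulant term $|E((F_{f_{2},n}(Z^{\theta }))^{3})|$ from (\ref{exact rates}), and the variance-discrepancy term $Cn^{4H-3}$. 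The only point requiring care, and the main (though elementary) obstacle, is the bookkeeping of which term dominates: for $H<5/8$ one has $4H-3<-1/2$ and $6H-9/2<-1/2$, so all three terms are $O(n^{-1/2})$; for $5/8\leqslant H<3/4$ a direct comparison of exponents (using $3/2-2H>0$ to beat the $n^{6H-9/2}$ rate, and $4H-3\geqslant -1/2$ to beat $n^{-1/2}$ and the logarithmic factor at $H=2/3$) shows that $n^{4H-3}$ dominates. This gives the two stated cases and completes the proof.
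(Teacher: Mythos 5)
Your proposal is correct and follows essentially the same route as the paper, whose ``proof'' is precisely the discussion preceding the proposition: apply Corollary \ref{cor CLT for quadratic var} with $\gamma=1$ from Lemma \ref{asymptotic R_{Q,q}}, take the third-cumulant rates from (\ref{exact rates}) and the variance-discrepancy rate $n^{4H-3}$ from (\ref{u-U for fOU}), and compare exponents in each range of $H$. Your writeup is in fact slightly more careful than the paper's (the explicit scaling identity for $d_{W}$, the lower bound on $E[U_{f_{2},n}^{2}(Z^{\theta })]$, and the $\log n$ normalization at $H=3/4$), but no new idea is involved.
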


Next we show how to obtain optimal rates of convergence in the Wasserstein
distance when $H<5/8$. This result is important methodologically speaking
because, thanks to the strategy in Section \ref{IRC}, it is essentially
always possible to transform one's long-memory time series into one which
satisfies $H<5/8$, by using a finite-difference transformation.

\subsubsection{Applying the optimal theorem}

To apply Theorem \ref{OptimalTheorem} we must check that conditions (\ref%
{23NEW}) and (\ref{youyou}) are met. We just saw that this is the case when $%
H<5/8$. However, we must also check that the corresponding constants $c_{3}$
and $c_{4}$ are sufficiently small. Since $H<5/8$, by (\ref{u-U for fOU}),
the constant $c_{4}$ can be made arbitrarily small for $n$ large enough. It
remains to show that $c_{3}$ can be chosen small. A direct application of
Lemma \ref{asymptotic R_{Q,q}} is insufficient for this purpose. Therefore,
we must modify our estimator slightly, by discarding some of the first
terms. We thus fix an integer $i_{0}>0$ and define%
\begin{equation}
\tilde{Q}_{f_{2},n}\left( X\right) :=\frac{1}{n}%
\sum_{i=i_{0}}^{i_{0}+n-1}f_{2}\left( X_{i}\right) .  \label{Qtilda}
\end{equation}%
It is easy to check that this is still a consistent and asymptotically
normal estimator of $r_{Z}\left( 0\right) $. By the proof of Lemma \ref%
{asymptotic R_{Q,q}}, we see that4
\begin{equation}
\left\Vert \tilde{Q}_{f_{2},n}\left( X\right) -\tilde{Q}_{f_{2},n}\left(
Z^{\theta }\right) \right\Vert _{L^{p}\left( \Omega \right) }\leqslant
c\left( \theta ,f_{2}\right) \frac{1}{n}\sum_{i=i_{0}}^{i_{0}+n-1}e^{-i%
\theta }\leqslant c\left( \theta ,f_{2}\right) \frac{1}{n}e^{-i_{0}\theta }.
\label{improved lemma 12}
\end{equation}%
Since $\theta >0$, we can make the last expression above as small as we want
by choosing $i_{0}$ sufficiently large. Thus Theorem \ref{OptimalTheorem}
applies, and we have the following optimal Berry-Esséen theorem for the
variance estimator $\tilde{Q}_{f_{2},n}\left( X\right) $, which, as we saw
in Section \ref{FOUgen}, gives access to estimators for $\theta $.

\begin{proposition}
\label{OptimalBEProp}If $H\in \left( 0,\frac{5}{8}\right) $, then there
exists an integer $i_{0}>0$ such that the quadratic variation $\tilde{Q}%
_{f_{2},n}$ defined in (\ref{Qtilda}) satisfies%
\begin{equation*}
d_{W}\left( \sqrt{n}\left[ \tilde{Q}_{f_{2},n}\left( X\right) -r_{Z}\left(
0\right) \right] ,\mathcal{N}(0,u_{f_{2}}(Z^{\theta }))\right) \asymp \frac{1%
}{\sqrt{n}}.
\end{equation*}%
Moreover, with $g$ as in (\ref{gee}), we have%
\begin{equation*}
d_{W}\left( \sqrt{n}\left( g\left( \tilde{Q}_{H_{2},n}\left( X\right)
\right) -\theta \right) ,\mathcal{N}(0,g^{\prime }\left( \theta ^{\ast
}\right) ^{2}u_{H_{2}}(Z^{\theta }))\right) \asymp \frac{1}{\sqrt{n}}
\end{equation*}
\end{proposition}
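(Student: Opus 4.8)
The plan is to establish the two displays separately: the first is a direct application of Theorem \ref{OptimalTheorem}, while the second is obtained by feeding that first result into the delta-method machinery of Section \ref{Towards}, supplementing the (easy) upper bound with a mean-shift argument for the lower bound.

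For the first display I would verify the two hypotheses of Theorem \ref{OptimalTheorem} with $Z=Z^\theta$, $Y$ the non-stationary term in (\ref{decompoXfOU}), $f_2(x)=x^2$, and the truncated estimator $\tilde Q_{f_2,n}$ of (\ref{Qtilda}). Condition (\ref{youyou}) holds because (\ref{u-U for fOU}) gives $|u_{f_2}(Z^\theta)-E[U_{f_2,n}^2(Z^\theta)]|\leqslant Cn^{4H-3}$ and $H<5/8$ forces $4H-3<-1/2$, so the left side is $o(n^{-1/2})$ and the constant $c_4$ can be taken arbitrarily small for $n$ large. Condition (\ref{23NEW}) holds because (\ref{improved lemma 12}) gives $\|\tilde Q_{f_2,n}(X)-\tilde Q_{f_2,n}(Z^\theta)\|_{L^1(\Omega)}\leqslant c(\theta,f_2)e^{-i_0\theta}/n$, so $c_3$ is made as small as we wish by choosing the truncation level $i_0$ large. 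Since $c_2=2c_1(Z)L(Z)>0$ is a fixed positive constant, the smallness of $c_3,c_4$ guarantees $c_4<c_2-c_3$, and Theorem \ref{OptimalTheorem} yields $d_W(U_{f_2,n}(X)/\sqrt{u_{f_2}(Z^\theta)},N)\asymp 1/\sqrt n$. Multiplying both arguments of $d_W$ by the constant $\sqrt{u_{f_2}(Z^\theta)}$ and using positive-homogeneity of the Wasserstein distance, together with $U_{f_2,n}(X)=\sqrt n(\tilde Q_{f_2,n}(X)-r_Z(0))$, converts this into the stated $d_W(\sqrt n[\tilde Q_{f_2,n}(X)-r_Z(0)],\mathcal N(0,u_{f_2}(Z^\theta)))\asymp 1/\sqrt n$.

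For the upper bound in the second display I would run the delta-method estimate underlying Theorem \ref{Berryesseentheta} with $g$ as in (\ref{gee}), now fed by the sharp upper bound of the first display rather than by Corollary \ref{CLTCor}. Its moment hypothesis is satisfied because $g''(x)$ is proportional to $(1+|x|)^{-1/(2H)-2}$ and hence bounded on $\mathbf{R}_+$, so $g''(\widehat\theta_n)$ has bounded moments of all orders. The delta-method bound is $C/\sqrt n+\varphi(n)$; since $H<5/8<2/3$, part (2) of Proposition \ref{CLT for quadratic var} gives $\varphi(n)\asymp 1/\sqrt n$, and replacing the normalization $\sqrt{E[U_{H_2,n}^2(Z^\theta)]}$ by $\sqrt{u_{H_2}(Z^\theta)}$ costs only $|1-E[U^2]/u|=O(n^{4H-3})=o(n^{-1/2})$, exactly as in Corollary \ref{CLTCor}. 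This produces the upper bound $d_W(\sqrt n(g(\tilde Q_{H_2,n}(X))-\theta),\mathcal N(0,g'(\theta^\ast)^2u_{H_2}(Z^\theta)))\leqslant C/\sqrt n$.

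The \emph{main obstacle} is the lower bound in the second display, where the delta method alone is insufficient: writing $g(\tilde Q)-\theta=g'(\theta^\ast)(\tilde Q-\theta^\ast)+\tfrac12 g''(\zeta_n)(\tilde Q-\theta^\ast)^2$, the remainder, after the $\sqrt n$ scaling, has $L^1$ norm of the same order $1/\sqrt n$ as the leading term, so one cannot transfer the lower bound of the first display through the reverse triangle inequality. Instead I would extract the lower bound from the deterministic bias created by the nonlinearity. Taking expectations in the Taylor expansion and using $E[\tilde Q_{H_2,n}(Z^\theta)]=\theta^\ast$ with (\ref{improved lemma 12}), the linear term contributes $g'(\theta^\ast)(E[\tilde Q_{H_2,n}(X)]-\theta^\ast)=O(e^{-i_0\theta}/n)$, while the quadratic term contributes $\tfrac12 g''(\theta^\ast)E[(\tilde Q_{H_2,n}(X)-\theta^\ast)^2]\sim \tfrac12 g''(\theta^\ast)u_{H_2}(Z^\theta)/n$, with $g''(\theta^\ast)\neq 0$; choosing $i_0$ large makes this quadratic contribution dominate, so $E[g(\tilde Q_{H_2,n}(X))]-\theta=\Theta(1/n)$ and therefore $E[\sqrt n(g(\tilde Q_{H_2,n}(X))-\theta)]=\Theta(1/\sqrt n)$. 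Because the limit law $\mathcal N(0,g'(\theta^\ast)^2u_{H_2}(Z^\theta))$ is centered, the $1$-Lipschitz test function $f(x)=x$ gives $d_W(\sqrt n(g(\tilde Q)-\theta),\mathcal N(0,\cdot))\geqslant |E[\sqrt n(g(\tilde Q)-\theta)]|\geqslant c/\sqrt n$, completing the two-sided estimate. The one routine point to justify is the passage from $E[(\tilde Q-\theta^\ast)^2]=E[U^2]/n$ to the constant $u_{H_2}(Z^\theta)$, the replacement of $g''(\zeta_n)$ by $g''(\theta^\ast)$, and the control of the cubic Taylor remainder as $O(n^{-3/2})$; all of these follow from the boundedness and Lipschitz property of $g''$ on $\mathbf{R}_+$ together with the hypercontractivity bound (\ref{hypercontractivity}), since $\tilde Q-\theta^\ast$ lives in a fixed finite sum of Wiener chaoses.
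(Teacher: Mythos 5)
Your treatment of the first display and of the upper bound in the second display is exactly the paper's route: you verify (\ref{youyou}) via (\ref{u-U for fOU}) and $4H-3<-1/2$, make $c_3$ small via the truncation $i_0$ in (\ref{improved lemma 12}), apply Theorem \ref{OptimalTheorem}, and then feed the result through the delta method of Theorem \ref{Berryesseentheta} with the bounded $g''$ from (\ref{gee}). Where you genuinely depart from the paper is the lower bound in the second display. The paper disposes of it with ``follows from Theorem \ref{Berryesseentheta} exactly as did Proposition \ref{BerryHermiteThetaProp}'', but both of those statements are one-sided upper bounds, and you correctly identify why the obvious transfer fails: the Taylor remainder $\tfrac{\sqrt n}{2}g''(\zeta_n)(\tilde Q-\theta^\ast)^2$ has $L^1$ norm of order $\|g''\|_\infty u_{H_2}(Z^\theta)/(2\sqrt n)$, which is commensurate with the lower bound $c/\sqrt n$ inherited from the first display, so the reverse triangle inequality via Lemma \ref{dWL1} gives nothing unless one controls constants that the paper never compares. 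Your fix --- extracting a deterministic bias $E[g(\tilde Q_{H_2,n}(X))]-\theta=\tfrac12 g''(\theta^\ast)u_{H_2}(Z^\theta)/n+O(e^{-i_0\theta}/n)+O(n^{-3/2})$ and testing against the $1$-Lipschitz function $f(x)=x$ --- is sound: $g''(\theta^\ast)>0$ for the $g$ of (\ref{gee}) (on the branch where $\theta^\ast\geq 0$, consistent with the paper's $|x|$ convention), the replacement of $g''(\zeta_n)$ by $g''(\theta^\ast)$ costs $O(n^{-3/2})$ by hypercontractivity in the second chaos, and enlarging $i_0$ suppresses the linear bias term. This supplies an argument the paper omits, at the price of one extra hypothesis ($g''(\theta^\ast)\neq 0$, plus $i_0$ now chosen relative to $g''(\theta^\ast)u_{H_2}(Z^\theta)$ as well as to $c_2-c_4$) that is harmless here; what it buys is an actual proof of the claimed $\asymp$ rather than an appeal to upper-bound statements.
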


\begin{proof}
The first result follows from the considerations immediately above. The
second follows from Theorem \ref{Berryesseentheta} exactly as did the result
in Proposition \ref{BerryHermiteThetaProp}; we omit the details.
\end{proof}

\subsubsection{Comments on higher-order finite-differenced data\label%
{HigherFOU}}

Given the claim that one can always reduce a time series to one with $H<5/8$%
, one ought to check that, in the case of the fOU process, the parameter $%
\theta $ remains explicitly accessible after the finite-difference
transformation. We first leave it to the reader to check that for fOU with
any $H<1$, and any positive real $d$, the fractional finite-differenced
process $D^{d}Z^{\theta }$ has an auto-correlation function which decays
like $k^{2H-2d-2}$, so that the memory length parameter $H^{\prime }=H-d$
satisfies the condition $H^{\prime }<5/8$ as soon as $d>3/8$. Certainly, one
may thus always consider the first-order difference process
\begin{equation*}
X_{k}^{(1)}=X_{k+1}-X_{k}
\end{equation*}%
as suggested in Section \ref{IRC}, and one can write $X^{\left( 1\right)
}=Z^{\left( 1\right) ,\theta }+Y$ and the reader may check that an estimate
such as (\ref{improved lemma 12}) still holds, so that the proposition above
holds for $X^{\left( 1\right) }$ for any $H<1$.

For the quadratic Hermite variation, it remains only to investigate the form
of the variance parameter $\lambda _{\phi _{2}}(Z^{\left( 1\right) ,\theta
})=r_{Z^{\left( 1\right) ,\theta }}\left( 0\right) $ to which $\tilde{Q}%
_{\phi _{2},n}\left( X^{\left( 1\right) }\right) $ converges. The
computation is tedious and presumably known. We find that the variance of $%
Z^{\left( 1\right) ,\theta }$ is
\begin{eqnarray*}
r_{Z^{\left( 1\right) ,\theta }}\left( 0\right) &=&\frac{e^{\theta }}{%
2+e^{-2\theta }}r_{Z^{\theta }}\left( 0\right) \\
&=&\frac{e^{\theta }}{2+e^{-2\theta }}H\Gamma (2H)\left\vert \theta
\right\vert ^{-2H}.
\end{eqnarray*}%
By considering the Hermite quadratic variation instead, as we did in Section %
\ref{FOUEx}, one avoids the singularity at the origin in this function,
obtaining that the function to be inverted to transform $\tilde{Q}%
_{H_{2},n}\left( X^{\left( 1\right) }\right) $ into an estimator of $\theta $
is simply
\begin{equation}
\mu _{H_{2},Z^{\left( 1\right) }}(\theta ):=\frac{e^{\theta }}{2+e^{-2\theta
}}\left( H\Gamma (2H)\left\vert \theta \right\vert ^{-2H}-1\right) .
\label{Muzy1}
\end{equation}%
One is confronted in $\mu _{H_{2},Z^{\left( 1\right) }}$ with a function
which has a single negative global minimum at $\theta _{\min }$, and tends
to $+\infty $ at $0$ and at $+\infty $. Thus in practice, when working with
this first finite difference, additional information is needed to find out
on which side of $\theta _{\min }$ the parameter $\theta $ would be located.
On each of the two intervals $(0,\theta _{\min }]$ and $[\theta _{\min
},+\infty )$ where $\mu _{H_{2},Z^{\left( 1\right) }}$ is a diffeomorphism,
despite the transcendental nature of this function of $\theta $, it is a
simple matter of inverting it numerically to access an asymptotically normal
estimator for $\theta $ for any $H\in \left( 0,1\right) $. It is immediate
that $\mu _{H_{2},Z^{\left( 1\right) }}$ behaves like $\left\vert \theta
\right\vert ^{-2H}$ near $0$, and like an exponential near $+\infty $. Using
Theorem \ref{Berryesseentheta} to prove a Berry-Esséen speed of convergence
for the $\theta $ estimator $\left( \mu _{H_{2},Z^{\left( 1\right) }}\right)
^{-1}\left( \tilde{Q}_{\phi _{2},n}\left( X^{\left( 1\right) }\right)
\right) $, one follows the argument leading to Proposition \ref%
{BerryHermiteThetaProp}, to obtain this $n^{-1/2}$ speed in Wasserstein
distance. The moment condition of Theorem \ref{Berryesseentheta} is
automatically satisfied in the case $\theta <\theta _{\min }$ because the
second derivative of $\left( \mu _{H_{2},Z^{\left( 1\right) }}\right) ^{-1}$
is bounded in that case. In the case $\theta >\theta _{\min }$ the function $%
\left( \left( \mu _{H_{2},Z^{\left( 1\right) }}\right) ^{-1}\right) ^{\prime
\prime }$ has logarithmic growth at $+\infty $; the moment condition in
Theorem \ref{Berryesseentheta} is then also satisfied since chaos variables
have moments of all orders, and certainly thus logarithmic ones. All further
details all omitted. Summarizing, as in Proposition \ref{OptimalBEProp}, we
obtain the following.

\begin{proposition}
Let $\mu _{H_{2},Z^{\left( 1\right) }}$ be as in (\ref{Muzy1}), $\theta
_{\min }$ be that function's global minimizer, and assume either $\theta
<\theta _{\min }$ or $\theta >\theta _{\min }$. Let $\theta ^{\ast }:=\mu
_{H_{2},Z^{\left( 1\right) }}\left( \theta \right) $ and $g:=\left( \mu
_{H_{2},Z^{\left( 1\right) }}\right) ^{-1}$ and
\begin{equation*}
\tilde{Q}_{f_{2},n}\left( X^{\left( 1\right) }\right) :=\frac{1}{n}%
\sum_{i=i_{0}}^{i_{0}+n-1}H_{2}\left( X_{i}-X_{i-1}\right) .
\end{equation*}%
Then for any $H\in \left( 0,1\right) $, we have
\begin{equation*}
\lim \sqrt{n}\left( \tilde{Q}_{H_{2},n}\left( X^{\left( 1\right) }\right)
-\theta ^{\ast }\right) =:u_{H_{2}}(Z^{\left( 1\right) }))<\infty
\end{equation*}%
and there exists an integer $i_{0}>0$ such that $g\left( \tilde{Q}%
_{H_{2},n}\left( X^{\left( 1\right) }\right) \right) $ is a strongly
consistent estimator of $\theta $ and
\begin{equation*}
d_{W}\left( \sqrt{n}\left( g\left( \tilde{Q}_{H_{2},n}\left( X^{\left(
1\right) }\right) \right) -\theta \right) ,\mathcal{N}(0,g^{\prime }\left(
\theta ^{\ast }\right) ^{2}u_{H_{2}}(Z^{\left( 1\right) }))\right) \asymp
\frac{1}{\sqrt{n}}.
\end{equation*}
\end{proposition}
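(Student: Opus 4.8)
The plan is to reduce everything to the optimal quadratic Berry-Ess\'een theorem (Theorem \ref{OptimalTheorem}) applied to the finite-differenced data, and then to transfer the resulting rate to the $\theta$-estimator through the delta-method device of Theorem \ref{Berryesseentheta}, exactly as Proposition \ref{OptimalBEProp} and Proposition \ref{BerryHermiteThetaProp} were obtained. The structural starting point is the decomposition $X^{(1)}=Z^{(1),\theta}+Y^{(1)}$ inherited by finite-differencing (\ref{decompoXfOU}), where $Z^{(1),\theta}$ is the stationary first difference of the stationary fractional Ornstein-Uhlenbeck process and $Y^{(1)}$ is the difference of its exponentially-decaying nonstationary correction. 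Because $r_{Z^\theta}(k)$ decays like $k^{2H-2}$, the two-power-unit reduction recalled in Section \ref{IRC} gives $r_{Z^{(1),\theta}}(k)\asymp k^{2H-4}$, so the effective memory parameter is $H'=H-1<5/8$ for every $H\in(0,1)$. In particular Condition (\ref{BM}) holds for $Z^{(1),\theta}$, so the finiteness of the asymptotic variance $u_{H_2}(Z^{(1)})$ in the first display follows immediately from Lemma \ref{uZlemma}, and we are placed squarely in the regime of Section \ref{QC}.

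Next I would verify the two quantitative hypotheses of Theorem \ref{OptimalTheorem} for $Z^{(1),\theta}$. For the variance-convergence bound (\ref{youyou}), the relevant tail satisfies $\sum_{|j|>n}r_{Z^{(1),\theta}}(j)^2\asymp n^{4H-7}=o(n^{-3})$, which is $o(n^{-1/2})$ for every $H<1$, so the constant $c_4$ can be driven below any prescribed threshold by taking $n$ large. For the nonstationarity bound (\ref{23NEW}), I would use the finite-differenced analogue of (\ref{improved lemma 12}): since $\theta>0$, discarding the first $i_0$ terms forces $\|\tilde Q_{f_2,n}(X^{(1)})-\tilde Q_{f_2,n}(Z^{(1),\theta})\|_{L^p(\Omega)}$ to decay like $n^{-1}e^{-i_0\theta}$, so the constant $c_3$ is made arbitrarily small by choosing $i_0$ large. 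This secures the smallness requirement $c_4<c_2-c_3$ of Theorem \ref{OptimalTheorem}, where $c_2=2c_1(Z^{(1)})L(Z^{(1)})>0$ is a fixed positive constant of the law of $Z^{(1),\theta}$. Theorem \ref{OptimalTheorem} then delivers the two-sided estimate $d_W(\sqrt n[\tilde Q_{f_2,n}(X^{(1)})-r_{Z^{(1),\theta}}(0)],\mathcal N(0,u_{f_2}(Z^{(1)})))\asymp 1/\sqrt n$, which is the content of the first two displays.

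Finally I would transfer this optimal rate to the estimator $g(\tilde Q_{H_2,n}(X^{(1)}))$ of $\theta$, with $g=(\mu_{H_2,Z^{(1)}})^{-1}$ and $\mu_{H_2,Z^{(1)}}$ as in (\ref{Muzy1}), by applying Theorem \ref{Berryesseentheta} as in the proof of Proposition \ref{BerryHermiteThetaProp}. The hypothesis $\theta<\theta_{\min}$ or $\theta>\theta_{\min}$ fixes a single monotonicity branch of $\mu_{H_2,Z^{(1)}}$, on which $g$ is a genuine diffeomorphism onto its image, so the delta-method expansion of Section \ref{Towards} applies verbatim. The only nonroutine input is the moment condition of Theorem \ref{Berryesseentheta} on $g''(\tilde Q_{H_2,n}(X^{(1)}))$. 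On the branch $\theta<\theta_{\min}$ the function $g''$ is bounded and the condition is immediate; on the branch $\theta>\theta_{\min}$, $g''$ grows only logarithmically at $+\infty$, and since $\tilde Q_{H_2,n}(X^{(1)})$ lives in the second Wiener chaos it has finite moments of every order by hypercontractivity (\ref{hypercontractivity}), uniformly in $n$, which controls any such logarithmic moment and supplies the required bounded moment of order greater than $1$.

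The main obstacle is the \emph{simultaneous} smallness of $c_3$ and $c_4$ demanded by Theorem \ref{OptimalTheorem}, rather than a mere upper bound; this is precisely why the truncated estimator $\tilde Q$ carrying the free index $i_0$ is introduced, and why one must restrict to one of the two monotonicity intervals of $\mu_{H_2,Z^{(1)}}$ so that $g$ is well defined and its curvature is controlled. Once these bookkeeping points are in place, the conclusion follows by assembling Theorem \ref{OptimalTheorem} with Theorem \ref{Berryesseentheta}, and the remaining computations are routine.
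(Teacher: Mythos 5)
Your proposal is correct and follows essentially the same route as the paper: decompose $X^{(1)}=Z^{(1),\theta}+Y^{(1)}$, use the two-power-unit memory reduction of Section \ref{IRC} to place $Z^{(1),\theta}$ in the regime of Theorem \ref{OptimalTheorem} (with the truncation index $i_{0}$ making $c_{3}$ small and the $o(n^{-3})$ variance tail making $c_{4}$ small), and then transfer the optimal $1/\sqrt{n}$ rate to the $\theta$-estimator via Theorem \ref{Berryesseentheta}, checking the moment condition on $g''$ separately on the two monotonicity branches of $\mu_{H_{2},Z^{(1)}}$ exactly as the paper does. No gaps.
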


\section{Application to Ornstein-Uhlenbeck processes : multi-parameter
examples\label{Multi}}

In the previous section, we provided a full study of univariate parameter
estimation for a fractional Ornstein-Uhlenbeck process, including all
details of how to apply our general theory. In this final section of our
article, we give two more examples of applications of our methods. For the
sake of conciseness, we focus on the results, providing only a minimal
amount of computations and proofs, since these are all modeled on the
arguments in Section \ref{AOUP}.

\subsection{OU driven by fractional Ornstein-Uhlenbeck process\label%
{OUFOUsection}}

In this section we assume that $X=\left\{ X_{t},t\geq 0\right\} $ is an
Ornstein-Uhlenbeck process driven by a fractional Ornstein-Uhlenbeck process
$V=\left\{ V_{t},t\geq 0\right\} $. This is given by the following linear
stochastic differential equations
\begin{equation}
\left\{ \begin{aligned} X_0=0; \quad dX_t=-\theta X_tdt+dV_t,\quad t\geq0\\
V_0=0;\quad dV_t=-\rho V_tdt+dB^H_t,\quad t\geq0, \end{aligned}\right.
\label{OUFOU}
\end{equation}%
where $B^{H}=\left\{ B_{t}^{H},t\geq 0\right\} $ is a fractional Brownian
motion of Hurst index $H\in (0,1)$, whereas $\theta >0$ and $\rho >0$ are
two unknown parameters such that $\theta \neq \rho $.

Using the notation (\ref{Ztheta}), the explicit solution to this linear
system, noted for instance in \cite{EEV}, implies the following
decomposition of $X_{t}$:
\begin{equation}
X_{t}=\frac{\rho }{\rho -\theta }X_{t}^{\rho }+\frac{\theta }{\theta -\rho }%
X_{t}^{\theta }.  \label{representationX}
\end{equation}%
On the other hand, we can also write the system (\ref{OUFOU}) as follows
\begin{equation}
dX_{t}=-\left( \theta +\rho \right) X_{t}dt-\rho \theta \Sigma
_{t}dt+dB_{t}^{H}.  \label{SIDE}
\end{equation}%
where for $0\leqslant t\leqslant T$
\begin{equation}
\Sigma _{t}=\int_{0}^{t}X_{s}ds=\frac{V_{t}-X_{t}}{\theta }=\frac{%
X_{t}^{\theta }-X_{t}^{\rho }}{\rho -\theta }  \label{expression
Sigma}
\end{equation}%
We also have
\begin{equation}
X_{t}^{\theta }=Z_{t}^{\theta }-e^{-\theta t}Z_{0}^{\theta }
\label{decomposition of X_m}
\end{equation}%
where
\begin{equation}
Z_{t}^{\theta }=\int_{-\infty }^{t}e^{-\theta (t-s)}dB_{s}^{H}.
\label{expression of Z_m}
\end{equation}%
Moreover, the process $\left( Z_{t}^{\theta },Z_{t}^{\theta ^{\prime
}}\right) $ is an ergodic stationary Gaussian process. As consequence
\begin{eqnarray}
X_{t} &=&\frac{\rho }{\rho -\theta }Z_{t}^{\rho }+\frac{\theta }{\theta
-\rho }Z_{t}^{\theta }-\left( \frac{\rho e^{-\rho t}}{\rho -\theta }%
Z_{0}^{\rho }+\frac{\theta e^{-\theta t}}{\theta -\rho }Z_{0}^{\theta
}\right)  \notag \\
:= &&Z_{t}^{\theta ,\rho }-\left( \frac{\rho e^{-\rho t}}{\rho -\theta }%
Z_{0}^{\rho }+\frac{\theta e^{-\theta t}}{\theta -\rho }Z_{0}^{\theta
}\right)  \label{representationX with Z}
\end{eqnarray}%
and
\begin{eqnarray}
\Sigma _{t} &=&\frac{Z_{t}^{\theta }-Z_{t}^{\rho }}{\rho -\theta }-\frac{%
e^{-\theta t}Z_{0}^{\theta }-e^{-\rho t}Z_{0}^{\rho }}{\rho -\theta }  \notag
\\
:= &&\Sigma _{t}^{\theta ,\rho }-\frac{e^{-\theta t}Z_{0}^{\theta }-e^{-\rho
t}Z_{0}^{\rho }}{\rho -\theta }.  \label{expression}
\end{eqnarray}%
Moreover, $Z^{\theta ,\rho }$ and $\Sigma ^{\theta ,\rho }$ are ergodic
stationary Gaussian processes.

Now, assume that the processes $X$ and $\Sigma $ are observed equidistantly
in time with the step size $\Delta _{n}=1$. We will construct estimators for
$(\theta ,\rho )$. By using the ergodicity of $Z^{\theta ,\rho }$ and $%
\Sigma ^{\theta ,\rho }$, Lemma \ref{asymptotic R_{Q,q}} and (\ref%
{consistency for Z+Y}), we conclude that
\begin{equation*}
\left( Q_{f_{q},n}(X),Q_{f_{q},n}(\Sigma )\right) \longrightarrow \left(
\lambda _{f_{q}}(Z^{\theta ,\rho }),\lambda _{f_{q}}(\Sigma ^{\theta ,\rho
})\right)
\end{equation*}%
almost surely as $n\rightarrow \infty $.

Moreover, by the Gaussian property of $Z^{\theta ,\rho }$ and $\Sigma
^{\theta ,\rho }$, and the expressions $\eta _{X}(\theta ,\rho )$ and $\eta
_{\Sigma }(\theta ,\rho )$ for the variances of $Z^{\theta ,\rho }$ and $%
\Sigma ^{\theta ,\rho }$ which are given respectively in (\ref{2nd moment of
OUFOU X}) and (\ref{2nd moment of OUFOU Sigma}) after Lemma \ref{hypotheses
OUFOU} in the Appendix, we can write
\begin{equation*}
\left( \lambda _{f_{q}}(Z^{\theta ,\rho }),\lambda _{f_{q}}(\Sigma ^{\theta
,\rho })\right) =\delta _{f_{q}}\left( \theta ,\rho \right)
\end{equation*}%
where $\delta _{f_{q}}$ is a function which can be expressed via $\eta
_{X}(\theta ,\rho )$ and $\eta _{\Sigma }(\theta ,\rho )$. Hence, in the
case when the function $\delta _{f_{q}}$ is invertible, we obtain the
following estimator for $\theta $
\begin{equation}
(\widehat{\theta }_{f_{q},n},\widehat{\rho }_{f_{q},n}):=\delta _{f_{q}}^{-1}%
\left[ \left( Q_{f_{q},n}(X),Q_{f_{q},n}(\Sigma )\right) \right] .
\label{estimator
of OUFOU}
\end{equation}

\begin{proposition}
Assume $H\in \left( 0,1\right) $ and $\delta _{f_{q}}$ is a homomorphism.
Let $(\widehat{\theta }_{f_{q},n},\widehat{\rho }_{f_{q},n})$ be the
estimator given in (\ref{estimator of OUFOU}). Then, as $n\longrightarrow
\infty $
\begin{equation}
(\widehat{\theta }_{f_{q},n},\widehat{\rho }_{f_{q},n})\longrightarrow
\left( \theta ,\rho \right)  \label{consistency of OUFOU}
\end{equation}%
almost surely.
\end{proposition}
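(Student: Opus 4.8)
The plan is to derive the consistency (\ref{consistency of OUFOU}) as an immediate consequence of the joint almost sure convergence of the observable pair $\left(Q_{f_q,n}(X),Q_{f_q,n}(\Sigma)\right)$ together with the continuity of the inverse map $\delta_{f_q}^{-1}$; that is, a pathwise continuous-mapping argument. No new estimates are required beyond those already assembled above the statement.

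First I would record the joint convergence
\[
\left( Q_{f_q,n}(X), Q_{f_q,n}(\Sigma) \right) \longrightarrow \left( \lambda_{f_q}(Z^{\theta,\rho}), \lambda_{f_q}(\Sigma^{\theta,\rho}) \right) \quad \text{a.s.},
\]
which was established in the discussion preceding the proposition. Concretely, each coordinate is treated separately by Theorem \ref{consistency for Z+Y}: writing $X = Z^{\theta,\rho} + Y_X$ and $\Sigma = \Sigma^{\theta,\rho} + Y_\Sigma$ as in (\ref{representationX with Z}) and (\ref{expression}), the stationary parts $Z^{\theta,\rho}$ and $\Sigma^{\theta,\rho}$ are ergodic, so the consistency condition (\ref{condition of consistency}) holds, while the non-stationary corrections $Y_X$ and $Y_\Sigma$ satisfy hypothesis (\ref{hypothesis on Z+Y}) with $\gamma=1$, since the proof of Lemma \ref{asymptotic R_{Q,q}} applies verbatim to the exponentially decaying remainders appearing in (\ref{representationX with Z}) and (\ref{expression}). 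Because almost sure convergence of each coordinate on a common probability space yields almost sure convergence of the vector, the displayed joint convergence holds on a single event of full measure.

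Next I would invoke the identity $\left(\lambda_{f_q}(Z^{\theta,\rho}),\lambda_{f_q}(\Sigma^{\theta,\rho})\right)=\delta_{f_q}(\theta,\rho)$ noted above, together with the standing assumption that $\delta_{f_q}$ is invertible with continuous inverse (a homeomorphism onto its image), so that $\delta_{f_q}^{-1}$ is continuous at the limit point. Applying $\delta_{f_q}^{-1}$ to the convergent sequence, on the full-measure event above,
\[
(\widehat{\theta}_{f_q,n}, \widehat{\rho}_{f_q,n}) = \delta_{f_q}^{-1}\left[ \left( Q_{f_q,n}(X), Q_{f_q,n}(\Sigma) \right) \right] \longrightarrow \delta_{f_q}^{-1}\left[ \delta_{f_q}(\theta,\rho) \right] = (\theta,\rho),
\]
which is exactly (\ref{consistency of OUFOU}).

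The argument carries essentially no analytic difficulty: the genuine work is already encapsulated in Lemma \ref{asymptotic R_{Q,q}} (controlling the non-stationarity) and in the explicit, invertible form of $\delta_{f_q}$. The only point deserving care is that continuity of $\delta_{f_q}^{-1}$ is needed on a neighbourhood of $\delta_{f_q}(\theta,\rho)$, which the homeomorphism hypothesis supplies; this is also where the standing assumption $\theta\neq\rho$ enters, since the representation (\ref{representationX}) and hence $\delta_{f_q}$ degenerate when $\theta=\rho$. I expect that, were one to drop the homeomorphism assumption, the main obstacle would be precisely the verification that $\delta_{f_q}$ is globally invertible with continuous inverse on the admissible region $\{\theta\neq\rho\}$; under the stated hypothesis this is assumed, and the proof reduces to the pathwise continuous-mapping step above.
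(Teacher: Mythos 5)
Your argument is correct and coincides with the paper's (essentially implicit) proof: the paper likewise deduces the proposition from the joint almost sure convergence of $\left( Q_{f_{q},n}(X),Q_{f_{q},n}(\Sigma )\right)$ to $\left( \lambda _{f_{q}}(Z^{\theta ,\rho }),\lambda _{f_{q}}(\Sigma ^{\theta ,\rho })\right) =\delta _{f_{q}}(\theta ,\rho )$ --- obtained via ergodicity, Lemma \ref{asymptotic R_{Q,q}} and Theorem \ref{consistency for Z+Y} --- followed by applying the continuous inverse $\delta _{f_{q}}^{-1}$. There is nothing to correct.
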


{\noindent \textbf{Examples.}} In the two following examples, the function $%
\delta _{f_{q}}$ is invertible and explicit, based on the expressions for $%
\eta _{X}(\theta ,\rho )$ and $\eta _{\Sigma }(\theta ,\rho )$ given
respectively in (\ref{2nd moment of OUFOU X}) and (\ref{2nd moment of OUFOU
Sigma}) in the Appendix.

\begin{itemize}
\item Suppose that $f_{q}=H_{q}$. Using (\ref{alphaHqcase}), (\ref{2nd
moment of OUFOU X}) and (\ref{2nd moment of OUFOU Sigma}), we have
\begin{equation*}
\delta _{H_{q}}\left( \theta ,\rho \right) =\frac{q!}{(\frac{q}{2})!2^{q/2}}%
\left( \left( \eta _{X}(\theta ,\rho )-1\right) ^{q/2},\left( \eta _{\Sigma
}(\theta ,\rho )-1\right) ^{q/2}\right) .
\end{equation*}

\item Suppose that $f_{q}=\phi _{q}$ with $\phi _{q}(x)=x^{q}$. From (\ref%
{alphaPhicase}), (\ref{2nd moment of OUFOU X}) and (\ref{2nd moment of OUFOU
Sigma}) we obtain
\begin{equation*}
\delta _{\phi _{q}}\left( \theta ,\rho \right) =\frac{q!}{(\frac{q}{2}%
)!2^{q/2}}\left( \left( \eta _{X}(\theta ,\rho )\right) ^{q/2},\left( \eta
_{\Sigma }(\theta ,\rho )\right) ^{q/2}\right) .
\end{equation*}
\end{itemize}

\begin{theorem}
\label{OUFOUThm}Let $H\in \left( 0,\frac{3}{4}\right) $. Define
\begin{equation}
\Gamma _{f_{q}}(\theta ,\rho )=\left(
\begin{matrix}
u_{f_{q}}(Z^{\theta ,\rho }) & u_{f_{q}}(Z^{\theta ,\rho },\Sigma ^{\theta
,\rho }) \\
u_{f_{q}}(Z^{\theta ,\rho },\Sigma ^{\theta ,\rho }) & u_{f_{q}}(\Sigma
^{\theta ,\rho })%
\end{matrix}%
\right)  \label{matrix Gamma(theta,rho)}
\end{equation}%
where
\begin{equation*}
u_{f_{q}}(Z^{\theta ,\rho },\Sigma ^{\theta ,\rho
})=\sum_{k=0}^{q/2}d_{f_{q},2k}^{2}(2k)!\sum_{j\in \mathbb{Z}^{\ast }}\left(
\frac{E\left( Z_{0}^{\theta ,\rho }\Sigma _{j}^{\theta ,\rho }\right) }{%
\sqrt{r_{Z^{\theta ,\rho }}(0)r_{\Sigma ^{\theta ,\rho }}(0)}}\right) ^{2k}.
\end{equation*}%
Then
\begin{equation}
d_{W}\left( \left( U_{f_{q},n}(X),U_{f_{q},n}(\Sigma )\right) ;\mathcal{N}%
\left( 0,\Gamma _{f_{q}}(\theta ,\rho )\right) \right) \leqslant \frac{C}{%
n^{1/4}}  \label{convergence in law vector
Q}
\end{equation}%
Hence, for any $H\in \left( 0,\frac{3}{4}\right) $,
\begin{equation}
\sqrt{n}\left( \widehat{\theta }_{f_{q},n}-\theta ,\widehat{\rho }%
_{f_{q},n}-\rho \right) \overset{\mathrm{\mathcal{L}}}{\longrightarrow }%
\mathcal{\ N}\left( 0,\ J_{\delta _{f_{q}}^{-1}}(\eta _{X}(\theta ,\rho
),\eta _{\Sigma }(\theta ,\rho ))\ \Gamma _{f_{q}}(\theta ,\rho )\ J_{\delta
_{f_{q}}^{-1}}^{T}(\eta _{X}(\theta ,\rho ),\eta _{\Sigma }(\theta ,\rho
))\right)  \label{convergence in law vector}
\end{equation}%
where $J_{\delta _{f_{q}}^{-1}}$ is the Jacobian matrix of $\delta
_{f_{q}}^{-1}$.
\end{theorem}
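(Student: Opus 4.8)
The theorem has two claims: first, the vector convergence rate (\ref{convergence in law vector Q}) for $(U_{f_q,n}(X), U_{f_q,n}(\Sigma))$ to a bivariate normal with covariance $\Gamma_{f_q}(\theta,\rho)$, at speed $n^{-1/4}$; and second, the CLT (\ref{convergence in law vector}) for the estimator $(\widehat\theta_{f_q,n},\widehat\rho_{f_q,n})$ obtained by inverting $\delta_{f_q}$, via a delta-method argument. Let me think about how each piece goes.

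The first claim is a multivariate analogue of Corollary 5 / Theorem 4 (CLT for Z+Y). The second is a two-dimensional version of the delta-method argument in Section 3.4.

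Let me reconstruct the structure.

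---

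The plan is to prove the two displayed conclusions in turn, first the multivariate total-variation/Wasserstein rate (\ref{convergence in law vector Q}) and then the asymptotic normality (\ref{convergence in law vector}) of the estimator, which follows by the multivariate delta method. For the first part, I would work with the stationary pair $(Z^{\theta,\rho},\Sigma^{\theta,\rho})$ and treat the nonstationary corrections as perturbations exactly as in Theorem \ref{CLT for Z+Y}. Concretely, I would first establish the analogue of Lemma \ref{asymptotic R_{Q,q}} for the pair, namely that $\|Q_{f_q,n}(X)-Q_{f_q,n}(Z^{\theta,\rho})\|_{L^p(\Omega)}$ and $\|Q_{f_q,n}(\Sigma)-Q_{f_q,n}(\Sigma^{\theta,\rho})\|_{L^p(\Omega)}$ are both $\mathcal{O}(n^{-1})$; this is immediate from the representations (\ref{representationX with Z}) and (\ref{expression}), since the discrepancy terms decay like $e^{-\theta i}+e^{-\rho i}$ and summing over $i$ gives the $1/n$ rate, just as in the proof of Lemma \ref{asymptotic R_{Q,q}}. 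So the nonstationary term contributes at worst a Wasserstein discrepancy of order $n^{1/2-\gamma}$ with $\gamma=1$, i.e. $n^{-1/2}$, which is dominated by $n^{-1/4}$.

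The core of the first part is the normal approximation for the \emph{stationary} vector $(U_{f_q,n}(Z^{\theta,\rho}),U_{f_q,n}(\Sigma^{\theta,\rho}))$. Here I would invoke a multivariate fourth-moment theorem in the spirit of Nourdin-Peccati: since each component is a finite sum of multiple integrals $I_{2k}$, the pair lives in a fixed sum of Wiener chaoses, and componentwise convergence of fourth cumulants to zero together with convergence of the covariance matrix to $\Gamma_{f_q}(\theta,\rho)$ forces joint convergence to the bivariate normal, with a quantitative bound controlled by the componentwise $\kappa_4$ terms. Convergence of the variances $E[U_{f_q,n}^2(Z^{\theta,\rho})]\to u_{f_q}(Z^{\theta,\rho})$ and similarly for $\Sigma^{\theta,\rho}$ follows from Lemma \ref{uZlemma}, while the off-diagonal entry converges to $u_{f_q}(Z^{\theta,\rho},\Sigma^{\theta,\rho})$ by the same Cesàro-type computation applied to the cross-covariances $E(Z_0^{\theta,\rho}\Sigma_j^{\theta,\rho})$. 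For $H<3/4$ both processes satisfy Condition (\ref{BM}) by Lemma \ref{hypotheses OUFOU}, so all these series converge; the fourth-cumulant bound from Theorem \ref{CLT}, namely $\kappa_4(U_{f_2,n})=\mathcal{O}\!\left(n^{-1}(\sum_{|j|<n}|r(j)|^{4/3})^3\right)$, yields the $n^{-1/4}$ rate after taking the fourth root, exactly as in Corollary \ref{CLTCor2}. Combining the stationary estimate with the $\mathcal{O}(n^{-1/2})$ nonstationary perturbation and the $\mathcal{O}(n^{-1+4H})$ variance-discrepancy term (all dominated by $n^{-1/4}$ for $H<3/4$) gives (\ref{convergence in law vector Q}).

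For the second conclusion I would apply the multivariate delta method. Since $(Q_{f_q,n}(X),Q_{f_q,n}(\Sigma))\to(\eta_X(\theta,\rho),\eta_\Sigma(\theta,\rho))=\delta_{f_q}(\theta,\rho)$ a.s. and $\sqrt n$ times the centered version converges in law to $\mathcal N(0,\Gamma_{f_q}(\theta,\rho))$ by the first part, and since $\delta_{f_q}^{-1}$ is differentiable with Jacobian $J_{\delta_{f_q}^{-1}}$ at the limit point, the standard delta method gives
\begin{equation*}
\sqrt n\left(\widehat\theta_{f_q,n}-\theta,\widehat\rho_{f_q,n}-\rho\right)\overset{\mathcal L}{\longrightarrow}\mathcal N\left(0,J_{\delta_{f_q}^{-1}}\,\Gamma_{f_q}(\theta,\rho)\,J_{\delta_{f_q}^{-1}}^{T}\right),
\end{equation*}
which is precisely (\ref{convergence in law vector}). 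The main obstacle is not the delta method itself but making the \emph{multivariate} fourth-moment/normal-approximation step rigorous with an explicit rate: one must check that joint normality (not merely marginal) holds with a genuine $n^{-1/4}$ Wasserstein bound, which requires controlling the cross-terms in the bivariate Malliavin-Stein bound and verifying that the off-diagonal covariance entry $u_{f_q}(Z^{\theta,\rho},\Sigma^{\theta,\rho})$ is genuinely the limit of $E[U_{f_q,n}(Z^{\theta,\rho})U_{f_q,n}(\Sigma^{\theta,\rho})]$; both reduce to the same summability afforded by Condition (\ref{BM}) for the pair, established in Lemma \ref{hypotheses OUFOU}. I would therefore state the multivariate bound as a direct consequence of the componentwise estimates from Theorem \ref{CLT} and the multivariate Stein-Malliavin machinery, leaving the routine cross-covariance bookkeeping to the reader as is done elsewhere in the paper.
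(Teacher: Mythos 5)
Your proposal follows essentially the same route as the paper: decompose $X$ and $\Sigma$ into the stationary pair $(Z^{\theta,\rho},\Sigma^{\theta,\rho})$ plus exponentially decaying corrections via (\ref{representationX with Z}) and (\ref{expression}), control the corrections in $L^{p}$ exactly as in Lemma \ref{asymptotic R_{Q,q}}, combine this with Lemma \ref{hypotheses OUFOU} and the Stein--Malliavin machinery of Theorem \ref{CLT for Z+Y} to get (\ref{convergence in law vector Q}), and then obtain (\ref{convergence in law vector}) by the delta method (the paper writes this as Taylor's formula with a quadratic remainder bounded by $C\sqrt{n}\Vert Q_{f_{q},n}-\lambda_{f_{q}}\Vert^{2}$). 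You are in fact more explicit than the paper about the genuinely multivariate step (joint rather than marginal normality and the identification of the cross-covariance limit), which the paper leaves entirely implicit; the only slip is the exponent of your variance-discrepancy term, which should read $n^{4H-3}$ rather than $n^{-1+4H}$.
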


\begin{proof}
Combining (\ref{representationX with Z}), (\ref{expression}), Lemma \ref%
{hypotheses OUFOU} and Theorem \ref{CLT for Z+Y}, we obtain (\ref%
{convergence in law vector Q}). Applying Taylor's formula we can write
\begin{equation*}
\sqrt{n}\left( \widehat{\theta }_{q,n}-\theta ,\widehat{\rho }_{q,n}-\rho
\right) =J_{\delta _{f_{q}}^{-1}}^{T}\left( \lambda _{f_{q}}(Z^{\theta ,\rho
}),\lambda _{f_{q}}(\Sigma ^{\theta ,\rho })\right) \left(
U_{f_{q},n}(X),U_{f_{q},n}(\Sigma )\right) +d_{n}
\end{equation*}%
where $d_{n}$ converges in distribution to zero, because
\begin{equation*}
\Vert d_{n}\Vert \leqslant C\sqrt{n}\left\Vert \left( Q_{f_{q},n}(X)-\lambda
_{f_{q}}(Z^{\theta ,\rho }),Q_{f_{q},n}(\Sigma )-\lambda _{f_{q}}(\Sigma
^{\theta ,\rho })\right) \right\Vert ^{2}\longrightarrow 0
\end{equation*}%
almost surely as $n\rightarrow \infty $ by using (\ref{convergence in law
vector Q}). Thus the 2-d random vector in the left-hand side of (\ref%
{convergence in law vector}) is the sum of a term converging in law to $0$
and another converging almost surely to $0$; thus it converges in law to $0$%
, establishing (\ref{convergence in law vector}).
\end{proof}

\textbf{Example}: Here we assume that $f_{q}=\phi _{q}$ and $q=2$, and we
can recompute the expression for the function $\delta _{\phi
_{2}}:(0,+\infty )^{2}$ $\mapsto (0,+\infty )^{2}$ as%
\begin{eqnarray*}
\delta _{\phi _{2}}\left( x,y\right) &=&\left( \eta _{X}(x,y),\eta _{\Sigma
}(x,y)\right) \\
&=&H\Gamma (2H)\times \left\{
\begin{array}{l}
\frac{1}{y^{2}-x^{2}}\left( y^{2-2H}-x^{2-2H},x^{-2H}-y^{-2H}\right) \quad %
\mbox{if }\ x\neq y \\
\left( (1-H)x^{-2H},Hx^{-2H-2}\right) \quad \mbox{if }\ y=x.%
\end{array}%
\right.
\end{eqnarray*}%
Since for every $(x,y)\in (0,+\infty )^{2}$ with $x\neq y$ the Jacobian of $%
\delta _{\phi _{2}}$ computes as%
\begin{equation*}
J_{\delta _{\phi _{2}}}\left( x,y\right) =\Gamma (2H+1)%
\begin{pmatrix}
\frac{\left( 1-H\right) x^{1-2H}\left( x^{2}-y^{2}\right) -x\left(
x^{2-2H}-y^{2-2H}\right) }{\left( x^{2}-y^{2}\right) ^{2}} & \frac{\left(
1-H\right) y^{1-2H}\left( y^{2}-x^{2}\right) -y\left(
y^{2-2H}-x^{2-2H}\right) }{\left( x^{2}-y^{2}\right) ^{2}} \\
\frac{Hx^{-2H-1}\left( x^{2}-y^{2}\right) +x\left( x^{-2H}-y^{-2H}\right) }{%
\left( x^{2}-y^{2}\right) ^{2}} & \frac{Hy^{-2H-1}\left( y^{2}-x^{2}\right)
+y\left( y^{-2H}-x^{-2H}\right) }{\left( x^{2}-y^{2}\right) ^{2}}%
\end{pmatrix}%
,
\end{equation*}%
which is non-zero on in $(0,+\infty )^{2}$. So $\delta _{\phi _{2}}$ is a
diffeomorphism in $(0,+\infty )^{2}$ and its inverse $\delta _{\phi
_{2}}^{-1}$ has a Jacobian%
\begin{equation*}
J_{\delta _{\phi _{2}}^{-1}}\left( a,b\right) =\frac{\Gamma (2H+1)}{\det
J_{F_{2}}\left( x,y\right) }%
\begin{pmatrix}
\frac{Hy^{-2H-1}\left( y^{2}-x^{2}\right) +y\left( y^{-2H}-x^{-2H}\right) }{%
\left( x^{2}-y^{2}\right) ^{2}} & -\frac{\left( 1-H\right) y^{1-2H}\left(
y^{2}-x^{2}\right) -y\left( y^{2-2H}-x^{2-2H}\right) }{\left(
x^{2}-y^{2}\right) ^{2}} \\
-\frac{Hx^{-2H-1}\left( x^{2}-y^{2}\right) +x\left( x^{-2H}-y^{-2H}\right) }{%
\left( x^{2}-y^{2}\right) ^{2}} & \frac{\left( 1-H\right) x^{1-2H}\left(
x^{2}-y^{2}\right) -x\left( x^{2-2H}-y^{2-2H}\right) }{\left(
x^{2}-y^{2}\right) ^{2}}%
\end{pmatrix}%
;
\end{equation*}%
where $\left( x,y\right) =\delta _{\phi _{2}}^{-1}\left( a,b\right) $. Thus
the asymptotic covariance matrix in (\ref{convergence in law vector}) is
explicit. Moreover, similarly to the results obtained in Section \ref{AOUP},
we can prove the following, all details being omitted.

\begin{proposition}
Let $\left( \alpha ,\beta \right) \in \mathbf{R}^{2}$. Under the assumptions
and notation of Theorem \ref{OUFOUThm},

\begin{itemize}
\item if $H\in (0,\frac{5}{8})$,%
\begin{equation*}
d_{W}\left( \alpha U_{\phi _{2},n}(X)+\beta U_{\phi _{2},n}(\Sigma );%
\mathcal{N}\left( 0,\left( \alpha ,\beta \right) \Gamma _{\phi _{2}}(\theta
,\rho )\left( \alpha ,\beta \right) ^{Tr}\right) \right) \asymp \frac{1}{%
\sqrt{n}},
\end{equation*}

\item if $H\in (\frac{5}{8},\frac{3}{4})$,%
\begin{equation*}
d_{W}\left( \alpha U_{\phi _{2},n}(X)+\beta U_{\phi _{2},n}(\Sigma );%
\mathcal{N}\left( 0,\left( \alpha ,\beta \right) \Gamma _{\phi _{2}}(\theta
,\rho )\left( \alpha ,\beta \right) ^{Tr}\right) \right) \leqslant \frac{C}{%
n^{4H-3}}.
\end{equation*}
\end{itemize}
\end{proposition}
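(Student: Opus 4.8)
The plan is to collapse the bivariate linear combination into a single second-chaos scalar and then run it through the optimal fourth-moment machinery of Section \ref{QC}. Write $W_n := \alpha U_{\phi_2,n}(X)+\beta U_{\phi_2,n}(\Sigma)$ and decompose $W_n = \widetilde{W}_n + R_n$, where $\widetilde{W}_n := \alpha U_{\phi_2,n}(Z^{\theta,\rho})+\beta U_{\phi_2,n}(\Sigma^{\theta,\rho})$ is the stationary part and $R_n$ collects the contributions of the transient terms appearing in (\ref{representationX with Z}) and (\ref{expression}). Since $U_{\phi_2,n}=\sqrt{n}(Q_{\phi_2,n}-\lambda_{\phi_2})$ and the same $\lambda_{\phi_2}$ is used for $X$ and for $Z^{\theta,\rho}$ (and likewise for $\Sigma$), the nuisance constants cancel, so the analogue of Lemma \ref{asymptotic R_{Q,q}}, whose proof needs only the exponential decay of those transient terms, gives $\|R_n\|_{L^1(\Omega)}=\mathcal{O}(n^{1/2}\cdot n^{-1})=\mathcal{O}(n^{-1/2})$. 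The decisive structural observation is that $\widetilde{W}_n$, being a linear combination of two elements of the second Wiener chaos, is itself a single second-chaos variable, so Proposition \ref{CLT for quadratic var} applies verbatim to its standardized version.

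For the upper bound I would first use Lemma \ref{dWL1} to peel off the transient part, $d_W(W_n,\mathcal{N}(0,\sigma^2))\leq d_W(\widetilde{W}_n,\mathcal{N}(0,\sigma^2))+\|R_n\|_{L^1(\Omega)}$, with $\sigma^2:=(\alpha,\beta)\Gamma_{\phi_2}(\theta,\rho)(\alpha,\beta)^{Tr}$. The remaining Wasserstein distance is handled exactly as in the proofs of Corollary \ref{CLTCor} and Corollary \ref{cor CLT for quadratic var}: replacing the true variance $E[\widetilde{W}_n^2]$ by its limit $\sigma^2$ costs the variance-discrepancy term $|1-E[\widetilde{W}_n^2]/\sigma^2|$, while the standardized distance is controlled by $\max\{\kappa_4,|\kappa_3|\}$ of the standardized $\widetilde{W}_n$ via Proposition \ref{CLT for quadratic var}(1).

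It then remains to estimate these three quantities. Expanding $\kappa_3$ and $\kappa_4$ as contractions of the combined kernel produces triple and quadruple sums of products of the three covariances $E[Z_i^{\theta,\rho}Z_j^{\theta,\rho}]$, $E[Z_i^{\theta,\rho}\Sigma_j^{\theta,\rho}]$, $E[\Sigma_i^{\theta,\rho}\Sigma_j^{\theta,\rho}]$, all of which decay at the fractional power rate $k^{2H-2}$ supplied by Lemma \ref{hypotheses OUFOU}. The computation underlying (\ref{exact rates}) and (\ref{u-U for fOU}) then gives $\kappa_3,\kappa_4=\mathcal{O}(n^{-1/2})$ and shows the variance discrepancy, being a tail sum of squared covariances over $|j|>n$, is $\mathcal{O}(n^{4H-3})$. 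For $H<5/8$ all three terms are $\mathcal{O}(n^{-1/2})$; for $H\in(5/8,3/4)$ the variance discrepancy $n^{4H-3}$ dominates, producing the two announced upper bounds.

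For the lower bound in the range $H<5/8$ I would follow the argument of Theorem \ref{OptimalTheorem}: the two-sided estimate of Proposition \ref{CLT for quadratic var}(1) for the standardized $\widetilde{W}_n$ combined with Lemma \ref{LBdWlemma} transfers a lower bound to $d_W$, while the transient correction and the change of normalization are absorbed exactly as in (\ref{dWc2})--(\ref{youyou}), the nuisance constant $c_3$ being made arbitrarily small by discarding the first $i_0$ observations as in (\ref{Qtilda}) and Proposition \ref{OptimalBEProp}, and $c_4=\mathcal{O}(n^{4H-3})=o(n^{-1/2})$ being automatically negligible. The genuine obstacle is showing that the intrinsic lower bound has exact order $n^{-1/2}$, i.e. that the third cumulant of the standardized $\widetilde{W}_n$ stays bounded below by $c/\sqrt{n}$. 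For a single stationary sequence this is precisely the explicit commensurability of Proposition \ref{CLT for quadratic var}(2) borrowed from \cite{NV2014}; here one must extend that estimate to the bivariate combination, checking that the cubic form in $(\alpha,\beta)$ carried by the combined third cumulant does not degenerate below order $n^{-1/2}$. This should follow because the three covariances above are asymptotically of constant sign and monotone, so the analysis of \cite{NV2014} producing the positive limit $L(Z)$ of (\ref{Ldef}) carries over to the joint kernel; verifying the non-degeneracy of this cubic form is the main point requiring care.
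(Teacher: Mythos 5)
Your overall architecture is exactly what the paper intends when it says the result follows ``similarly to the results obtained in Section \ref{AOUP}, all details being omitted'': absorb the transient parts of $X$ and $\Sigma$ in $L^{1}$ via the analogue of Lemma \ref{asymptotic R_{Q,q}} and Lemma \ref{dWL1}, observe that $\alpha U_{\phi_2,n}(Z^{\theta,\rho})+\beta U_{\phi_2,n}(\Sigma^{\theta,\rho})$ is a single second-chaos element so that Proposition \ref{CLT for quadratic var}(1) applies, and then balance the three error sources (transient $\mathcal{O}(n^{-1/2})$, cumulants, variance discrepancy $\mathcal{O}(n^{4H-3})$ from the tail sums of squared covariances, all of which decay like $k^{2H-2}$ by Lemma \ref{hypotheses OUFOU}). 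Your bookkeeping of which term dominates in each regime is right, and your derivation produces the decaying bound $Cn^{4H-3}$ for $H\in(5/8,3/4)$, which is evidently what the displayed $C/n^{4H-3}$ is meant to say.

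The genuine gap is the one you flag but then wave through: the lower bound for $H<5/8$. The optimal fourth moment theorem gives $d_{TV}\asymp\max\{\kappa_4,|\kappa_3|\}$, and since $\kappa_4=\mathcal{O}(n^{-1})$ here, the claimed rate $n^{-1/2}$ from below rests entirely on showing $|\kappa_3|\geq c\,n^{-1/2}$ for the combined kernel. But $\sqrt{n}\,\kappa_3$ converges to a homogeneous \emph{cubic} form $c(\alpha,\beta)$ whose coefficients are the mixed third-order trace functionals of the two kernels, and a real homogeneous cubic in two variables always factors as a linear form times a quadratic, hence vanishes identically along at least one real direction $(\alpha_0,\beta_0)\neq(0,0)$. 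Along that direction $|\kappa_3|=o(n^{-1/2})$ and the two-sided commensurability collapses to $\mathcal{O}(n^{-1})\vee o(n^{-1/2})$, so the asserted lower bound cannot hold for \emph{every} $(\alpha,\beta)\in\mathbf{R}^{2}$; monotonicity and constant sign of the covariances (the hypotheses of \cite[Theorem 3]{NV2014}) control each diagonal term but cannot prevent cancellation in the cross terms of the cubic form. To make the proof complete one must either restrict to $(\alpha,\beta)$ outside the zero set of $c(\cdot,\cdot)$, or compute $c$ explicitly from the covariances of $Z^{\theta,\rho}$ and $\Sigma^{\theta,\rho}$ and identify that exceptional set. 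A secondary, smaller point: as in Proposition \ref{OptimalBEProp}, the lower bound also requires the transient constant $c_3$ to be small, so the statement implicitly concerns the truncated variation $\tilde{Q}_{\phi_2,n}$ of (\ref{Qtilda}) (equivalently, ``there exists $i_0$''); you use this correctly but it should be said explicitly since the proposition is stated for $U_{\phi_2,n}(X)$ itself.
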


\subsection{ Fractional Ornstein-Uhlenbeck process of the second kind\label%
{FOUSKsection}}

The last example we consider is the so-called fractional Ornstein-Uhlenbeck
process of the second kind, defined via the stochastic differential equation
\begin{equation}
S_{0}=0,\mbox{ and }\ dS_{t}=-\alpha S_{t}dt+dY_{t}^{(1)},\quad t\geq 0,
\label{FOUSK}
\end{equation}%
where $Y_{t}^{(1)}=\int_{0}^{t}e^{-s}dB_{a_{s}}$ with $a_{s}=He^{\frac{s}{H}%
} $ and $B=\left\{ B_{t},t\geq 0\right\} $ is a fractional Brownian motion
with Hurst parameter $H\in (\frac{1}{2},1)$, and where $\alpha >0$ is the
unknown real parameter which we would like to estimate. The equation (\ref%
{FOUSK}) admits an explicit solution
\begin{equation*}
S_{t}=e^{-\alpha t}\int_{0}^{t}e^{\alpha s}dY_{s}^{(1)}=e^{-\alpha
t}\int_{0}^{t}e^{(\alpha -1)s}dB_{a_{s}}=H^{(1-\alpha )H}e^{-\alpha
t}\int_{a_{0}}^{a_{t}}r^{(\alpha -1)H}dB_{r}.
\end{equation*}%
Hence we can also write
\begin{equation*}
S_{t}=S_{t}^{\alpha }-e^{-\alpha t}S_{0}^{\alpha }
\end{equation*}%
where
\begin{equation*}
S_{t}^{\alpha }=e^{-\alpha t}\int_{-\infty }^{t}e^{(\alpha
-1)s}dB_{a_{s}}=H^{(1-\alpha )H}e^{-\alpha t}\int_{0}^{a_{t}}r^{(\alpha
-1)H}d\tilde{B}_{r},
\end{equation*}%
where the second equality holds by bijective change of variable, where $%
\tilde{B}$ has the same law as $B$. Using a similar argument to that in
Lemma \ref{asymptotic R_{Q,q}} we have for every $p\geq 1$ and for all $n\in
\mathbb{N}$,
\begin{equation}
\left\Vert Q_{f_{q},n}(S)-Q_{f_{q},n}(S^{\alpha })\right\Vert _{L^{p}(\Omega
)}=\mathcal{O}\left( n^{-1}\right) .  \label{hyp on Z+Y FOUSK}
\end{equation}%
As consequence, by using $S^{\alpha }$ ergodic and (\ref{hyp on Z+Y FOUSK})
we conclude that, almost surely as $n\rightarrow \infty $,
\begin{equation*}
Q_{f_{q},n}(S)\longrightarrow \lambda _{f_{q}}(S^{\alpha }).
\end{equation*}%
\newline
Moreover, by the Gaussian property of $U^{\alpha }$ and (\ref{2nd moment of
FOUSK}) we can write
\begin{equation*}
\lambda _{f_{q}}(S^{\alpha }):=\nu _{f_{q}}(\alpha )
\end{equation*}%
where $\nu _{f_{q}}$ is a function. Hence, in the case when the function $%
\nu _{f_{q}}$ is a homeomorphism, we obtain the following strongly
consistent estimator for $\alpha $
\begin{equation}
\widehat{\alpha }_{f_{q},n}:=\nu _{f_{q}}^{-1}\left[ Q_{f_{q},n}(S)\right] .
\label{estimator
of FOUSK}
\end{equation}

\begin{proposition}
Assume $H\in \left( \frac{1}{2},1\right) $ and $\nu _{f_{q}}$ is a
homeomorphism. Let $\widehat{\alpha }_{f_{q},n}$ be the estimator given in (%
\ref{estimator of FOUSK}). Then, almost surely as $n\longrightarrow \infty $
\begin{equation*}
\widehat{\alpha }_{f_{q},n}\longrightarrow \alpha .
\end{equation*}
\end{proposition}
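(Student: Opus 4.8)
The plan is to obtain this proposition as an immediate corollary of facts already assembled just above its statement, exactly as was done for the fractional Ornstein--Uhlenbeck estimator $\check{\theta}_{f_q,n}$ following (\ref{estimator of FOU}); no new analytic estimate is needed, so the argument is a continuous-mapping (Slutsky-type) deduction from almost sure convergence.

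First I would record the two ingredients already in hand. The $L^p$ bound (\ref{hyp on Z+Y FOUSK}) combined with the ergodicity of the stationary process $S^{\alpha}$ gives, almost surely as $n \to \infty$,
\[
Q_{f_q,n}(S) \longrightarrow \lambda_{f_q}(S^{\alpha}),
\]
while the Gaussian property of $S^{\alpha}$ together with the variance formula (\ref{2nd moment of FOUSK}) identifies the limit as $\lambda_{f_q}(S^{\alpha}) = \nu_{f_q}(\alpha)$.

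Next I would invoke the standing assumption that $\nu_{f_q}$ is a homeomorphism, so that $\nu_{f_q}^{-1}$ is continuous on the range of $\nu_{f_q}$, and in particular at the point $\nu_{f_q}(\alpha)$. Since $Q_{f_q,n}(S)$ converges almost surely to $\nu_{f_q}(\alpha)$, for almost every path it eventually enters any fixed neighborhood of $\nu_{f_q}(\alpha)$, hence eventually lies in the range of $\nu_{f_q}$; this guarantees that the estimator $\widehat{\alpha}_{f_q,n} = \nu_{f_q}^{-1}[Q_{f_q,n}(S)]$ is well-defined for large $n$. Applying the continuity of $\nu_{f_q}^{-1}$ along these almost surely convergent sequences then yields
\[
\widehat{\alpha}_{f_q,n} = \nu_{f_q}^{-1}[Q_{f_q,n}(S)] \longrightarrow \nu_{f_q}^{-1}[\nu_{f_q}(\alpha)] = \alpha
\]
almost surely, which is the claim.

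I do not expect any serious obstacle: the genuine analytic content---the $L^p$ control (\ref{hyp on Z+Y FOUSK}) and the computation of the stationary variance leading to $\nu_{f_q}$---has already been carried out, and the only point deserving a line of care is the well-definedness of $\nu_{f_q}^{-1}[Q_{f_q,n}(S)]$ for finite $n$, which is handled by the eventual confinement of $Q_{f_q,n}(S)$ to a neighborhood of $\nu_{f_q}(\alpha)$ inside the range of $\nu_{f_q}$ noted above.
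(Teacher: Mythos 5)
Your argument is correct and is exactly the route the paper intends: the almost sure convergence $Q_{f_{q},n}(S)\rightarrow \lambda _{f_{q}}(S^{\alpha })=\nu _{f_{q}}(\alpha )$ (from the ergodicity of $S^{\alpha }$ together with the bound (\ref{hyp on Z+Y FOUSK})) followed by continuity of $\nu _{f_{q}}^{-1}$, which is why the paper states the proposition without a separate proof. Your extra remark on the eventual well-definedness of $\nu _{f_{q}}^{-1}[Q_{f_{q},n}(S)]$ is a harmless and sensible addition.
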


{\noindent \textbf{Examples.}} In the two following examples, the function $%
\nu _{f_{q}}$ is homeomorphic and explicit.

\begin{itemize}
\item Suppose that $f_{q}=H_{q}$. Using (\ref{alphaHqcase}) and (\ref{2nd
moment of FOUSK}), we have
\begin{equation*}
\nu _{H_{q}}(\alpha )=\lambda _{H_{q}}(S^{\alpha })=\frac{q!}{(\frac{q}{2}%
)!2^{q/2}}\left( \frac{(2H-1)H^{2H}}{\alpha }\beta (1-H+\alpha
H,2H-1)-1\right) ^{q/2}.
\end{equation*}

\item Suppose that $f_{q}=\phi _{q}$ with $\phi _{q}(x)=x^{q}$. From (\ref%
{alphaPhicase}) and (\ref{2nd moment of FOUSK}) we obtain $\nu _{\phi
_{q}}(\alpha )=\lambda _{\phi _{q}}(S^{\alpha })=\frac{q!}{(\frac{q}{2}%
)!2^{q/2}}\left[ \frac{(2H-1)H^{2H}}{\alpha }\beta (1-H+\alpha H,2H-1)\right]
^{q/2}.$

\item The reader will check that in both cases above, the function $\alpha
\mapsto v\left( \alpha \right) $ is monotone (decreasing) and convex from $%
\mathbf{R}_{+}$ to $\mathbf{R}_{+}$, and that the moment condition of
Theorem \ref{Berryesseentheta} on $\left( v^{-1}\right) ^{\prime \prime }$
is satisfied.
\end{itemize}

Now, we study the asymptotic distribution of $\widehat{\alpha }_{f_{q},n}$.
By (\ref{inner product of S_alpha}) which is established in Lemma \ref{ppt
FOUSK} in the Appendix, we have for every $H\in (\frac{1}{2},1)$, $%
\sum_{j\in \mathbb{Z}}\left\vert r_{S^{\alpha }}(j)\right\vert ^{2}<\infty $
and $\kappa _{4}(U_{f_{q},n}(S^{\alpha }))=\mathcal{O}(\frac{1}{n})$. Thus,
applying (\ref{d_{TV} for Z+Y with finite serie}) we deduce the following
result.

\begin{proposition}
\label{asymptotic distribution of Q_{q,n}(FOUSK)} Suppose that $H\in (\frac{1%
}{2},1)$ and $\alpha >0$. Then
\begin{equation*}
d_{W}\left( u_{f_{q}}(S^{\alpha })^{-1/2}U_{f_{q},n}(S),N\right) \leqslant
Cn^{-\frac{1}{4}}
\end{equation*}%
In particular,
\begin{equation*}
\sqrt{n}\left( \widehat{\alpha }_{f_{q},n}-\alpha \right) \overset{law}{%
\longrightarrow }\mathcal{N}\left( 0,u_{f_{q}}(S^{\alpha})\left( \left( \nu
_{f_{q}}^{-1}\right) ^{\prime }(\alpha )\right) ^{-2}\right).
\end{equation*}
\end{proposition}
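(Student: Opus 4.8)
The plan is to recognize that the observed process decomposes as $S = S^{\alpha} + Y$ with the stationary, ergodic part $S^{\alpha}$ given in the text and the nonstationary nuisance term $Y_{t} = -e^{-\alpha t}S_{0}^{\alpha}$, which places us exactly in the setting of Section \ref{NONSTAT}. I would therefore apply the Wasserstein estimate (\ref{d_{TV} for Z+Y with finite serie}) of Theorem \ref{CLT for Z+Y} with $Z = S^{\alpha}$, after checking its hypotheses: Condition (\ref{BM}), i.e. $\sum_{j}|r_{S^{\alpha}}(j)|^{2}<\infty$, and the nonstationarity bound (\ref{hyp on Z+Y FOUSK}), which holds with $\gamma = 1$. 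Both are available from Lemma \ref{ppt FOUSK} and the argument preceding the proposition, for every $H\in(1/2,1)$.

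With these inputs, the three terms of (\ref{d_{TV} for Z+Y with finite serie}) are controlled as follows. The nonstationarity term is $O(n^{1/2-\gamma}) = O(n^{-1/2})$ since $\gamma = 1$. The core fourth-cumulant term contributes $\sqrt[4]{\kappa_{4}(U_{f_{2},n}(S^{\alpha}))} = O(n^{-1/4})$, using $\kappa_{4}(U_{f_{2},n}(S^{\alpha})) = O(1/n)$ from Lemma \ref{ppt FOUSK}. For the variance-discrepancy term I would invoke Lemma \ref{uZlemma}, which identifies $|1 - E[U_{f_{q},n}^{2}(S^{\alpha})]/u_{f_{q}}(S^{\alpha})|$ with a tail of the type $\sum_{|j|>n}(r_{S^{\alpha}}(j)/r_{S^{\alpha}}(0))^{2k}$; the decay of $r_{S^{\alpha}}$ recorded in (\ref{inner product of S_alpha}) makes this tail vanish strictly faster than $n^{-1/4}$. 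Summing the three rates, the fourth-cumulant term dominates and yields $d_{W}(u_{f_{q}}(S^{\alpha})^{-1/2}U_{f_{q},n}(S),N)\leqslant Cn^{-1/4}$, which is the first claim.

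For the second claim I would use the delta method. Since $\widehat{\alpha}_{f_{q},n} = \nu_{f_{q}}^{-1}(Q_{f_{q},n}(S))$ and $\alpha = \nu_{f_{q}}^{-1}(\nu_{f_{q}}(\alpha))$ with $\nu_{f_{q}}$ a diffeomorphism, the mean value theorem gives $\sqrt{n}(\widehat{\alpha}_{f_{q},n}-\alpha) = (\nu_{f_{q}}^{-1})'(\xi_{n})\,U_{f_{q},n}(S)$ for some $\xi_{n}$ between $Q_{f_{q},n}(S)$ and $\nu_{f_{q}}(\alpha)$. The first claim gives $U_{f_{q},n}(S)\to\mathcal{N}(0,u_{f_{q}}(S^{\alpha}))$ in law, while strong consistency of $Q_{f_{q},n}(S)$ forces $\xi_{n}\to\nu_{f_{q}}(\alpha)$ and hence $(\nu_{f_{q}}^{-1})'(\xi_{n})\to(\nu_{f_{q}}^{-1})'(\nu_{f_{q}}(\alpha)) = 1/\nu_{f_{q}}'(\alpha)$ almost surely. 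Slutsky's theorem then delivers the announced Gaussian limit, whose variance $u_{f_{q}}(S^{\alpha})\,\big((\nu_{f_{q}}^{-1})'(\nu_{f_{q}}(\alpha))\big)^{2} = u_{f_{q}}(S^{\alpha})/\nu_{f_{q}}'(\alpha)^{2}$ is read off from the Jacobian of $\nu_{f_{q}}^{-1}$, exactly as in the one-parameter fOU case (Proposition \ref{asymptotic distribution of Q_{q,n}(FOU)}). If one wanted a quantitative rate rather than mere convergence in law, one would instead invoke Theorem \ref{Berryesseentheta}, whose moment hypothesis on $(\nu_{f_{q}}^{-1})''$ was already verified for these $\nu_{f_{q}}$.

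The step I expect to require the most care is the variance-discrepancy term: everything hinges on extracting from Lemma \ref{ppt FOUSK} a decay rate for $\sum_{|j|>n}r_{S^{\alpha}}(j)^{2}$ that beats $n^{-1/4}$. The fact that $\kappa_{4}(U_{f_{2},n}(S^{\alpha})) = O(1/n)$ already signals short-memory behavior (effectively $\sum_{j}|r_{S^{\alpha}}(j)|^{4/3}<\infty$), from which a tail of order $o(n^{-1/4})$ follows; but pinning down the precise decay of the FOUSK autocorrelation is the one genuinely analytical ingredient, and it is exactly what the appendix lemma supplies.
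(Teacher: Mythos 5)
Your proposal is correct and follows essentially the same route as the paper: the paper's (implicit) proof is precisely to read off from Lemma \ref{ppt FOUSK} the exponential decay of $r_{S^{\alpha }}$, hence Condition (\ref{BM}) and $\kappa _{4}(U_{f_{2},n}(S^{\alpha }))=\mathcal{O}(1/n)$, combine this with the nonstationarity bound (\ref{hyp on Z+Y FOUSK}) (so $\gamma =1$), and apply the Wasserstein estimate (\ref{d_{TV} for Z+Y with finite serie}) of Theorem \ref{CLT for Z+Y}, then obtain the limit law for $\widehat{\alpha }_{f_{q},n}$ by the mean-value-theorem/delta-method argument exactly as in Proposition \ref{asymptotic distribution of Q_{q,n}(FOU)}. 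Your explicit form of the asymptotic variance, $u_{f_{q}}(S^{\alpha })/\nu _{f_{q}}^{\prime }(\alpha )^{2}$, is the correct reading of the statement.
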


\noindent \textbf{Quadratic case}. In this case we can improve the rate
convergence of $\widehat{\alpha }_{f_{2},n}$. By using Theorem \ref%
{OptimalTheorem}, the estimates $\kappa _{4}(U_{f_{2},n}(S^{\alpha }))=%
\mathcal{O}(\frac{1}{n})$ and $\left\vert E\left( (F_{f_{2},n}(S^{\alpha
}))^{3}\right) \right\vert =\mathcal{O}(\frac{1}{\sqrt{n}})$, and invoking
the properties of $u_{f_{2}}$ described in the examples (bullet points)
above to invoke Theorem \ref{Berryesseentheta}, we get the following.

\begin{proposition}
Let $H\in \left( \frac{1}{2},1\right) $. Then
\begin{equation*}
d_{W}\left( u_{f_{2}}(S^{\alpha })^{-1/2}U_{f_{2},n}(S),N\right) \asymp
\frac{1}{\sqrt{n}},
\end{equation*}%
and
\begin{equation*}
d_{W}\left( \sqrt{n}\left( \widehat{\alpha }_{f_{2},n}-\alpha \right) ,%
\mathcal{N}(0,u_{f_{2}}(S^{\alpha }))\left( \left( \nu _{f_{2}}^{-1}\right)
^{\prime }(\alpha )\right) ^{-2}\right) \leqslant \frac{C}{\sqrt{n}}.
\end{equation*}
\end{proposition}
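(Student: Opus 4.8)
The plan is to realize $S=S^{\alpha}+Y$ with the nonstationary nuisance term $Y_{t}=-e^{-\alpha t}S_{0}^{\alpha}$, so that we are exactly in the setup of Section \ref{QC} with $Z=S^{\alpha}$, and then to verify the two hypotheses of Theorem \ref{OptimalTheorem} in order to obtain the two-sided bound for $U_{f_{2},n}(S)$; the passage to the estimator $\widehat{\alpha}_{f_{2},n}=\nu_{f_{2}}^{-1}(\tilde{Q}_{f_{2},n}(S))$ is then handled by Theorem \ref{Berryesseentheta}. The three facts recorded just above the proposition, namely $\sum_{j}|r_{S^{\alpha}}(j)|^{2}<\infty$ (Lemma \ref{ppt FOUSK}), $\kappa_{4}(U_{f_{2},n}(S^{\alpha}))=\mathcal{O}(1/n)$, and $|E((F_{f_{2},n}(S^{\alpha}))^{3})|=\mathcal{O}(1/\sqrt{n})$, are precisely what is needed to control every term appearing in Theorem \ref{OptimalTheorem}.

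First I would check condition (\ref{23NEW}). The estimate (\ref{hyp on Z+Y FOUSK}) already gives $\Vert Q_{f_{2},n}(S)-Q_{f_{2},n}(S^{\alpha})\Vert_{L^{1}(\Omega)}=\mathcal{O}(1/n)$, but a bare application of this does not make the constant $c_{3}$ small. As in Section \ref{OBEQ}, I would therefore discard the first $i_{0}$ terms and work with $\tilde{Q}_{f_{2},n}(S):=\frac{1}{n}\sum_{i=i_{0}}^{i_{0}+n-1}f_{2}(S_{i})$; following the refinement (\ref{improved lemma 12}), the $L^{1}$ discrepancy is bounded by a quantity of the form $c(\alpha,f_{2})\,n^{-1}e^{-i_{0}\alpha}$, which, since $\alpha>0$, can be made $\leqslant c_{3}\sqrt{u_{f_{2}}(S^{\alpha})}/n$ with $c_{3}$ arbitrarily small by taking $i_{0}$ large. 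Next I would check condition (\ref{youyou}): by Lemma \ref{uZlemma} the discrepancy $|u_{f_{2}}(S^{\alpha})-E[U_{f_{2},n}^{2}(S^{\alpha})]|$ is, up to the normalization $r_{S^{\alpha}}(0)$, the tail $2\sum_{|j|>n}r_{S^{\alpha}}(j)^{2}$; the decay of $r_{S^{\alpha}}$ from Lemma \ref{ppt FOUSK}—strong enough to render $\sum_{j}|r_{S^{\alpha}}(j)|^{3/2}$ convergent, which is what underlies the sharp rate $|E(F^{3})|\asymp 1/\sqrt{n}$—forces this tail to be $o(1/\sqrt{n})$, so that $c_{4}$ is negligible for $n$ large.

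With $c_{3}$ and $c_{4}$ both small while $c_{2}=2c_{1}(S^{\alpha})L(S^{\alpha})$ remains fixed and positive (it exists and is positive under Condition (\ref{BM}) by Proposition \ref{CLT for quadratic var}, via (\ref{c1}), (\ref{Ldef}), (\ref{c2})), the strict inequality $c_{4}<c_{2}-c_{3}$ demanded by Theorem \ref{OptimalTheorem} holds, and invoking that theorem yields $d_{W}\left(u_{f_{2}}(S^{\alpha})^{-1/2}U_{f_{2},n}(S),N\right)\asymp 1/\sqrt{n}$, the common order $1/\sqrt{n}$ being dictated by the sharp third-cumulant asymptotics while $\kappa_{4}=\mathcal{O}(1/n)$ contributes only at lower order. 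For the second display I would apply Theorem \ref{Berryesseentheta} with $g:=\nu_{f_{2}}^{-1}$, $\theta:=\alpha$ and $\theta^{\ast}:=\nu_{f_{2}}(\alpha)=\lambda_{f_{2}}(S^{\alpha})$, taking $\varphi(n)=\mathcal{O}(1/\sqrt{n})$ from the first part. The only nontrivial hypothesis is that $g''=(\nu_{f_{2}}^{-1})''$ have a moment of order exceeding $1$ bounded in $n$; this is exactly the property recorded in the third Examples bullet above (monotonicity and convexity of $\nu_{f_{2}}$, checked as in the strategy of Section \ref{HigherFOU}), so Theorem \ref{Berryesseentheta} gives the announced bound $C/\sqrt{n}$ for $\sqrt{n}(\widehat{\alpha}_{f_{2},n}-\alpha)$ against $\mathcal{N}\left(0,\,u_{f_{2}}(S^{\alpha})\left((\nu_{f_{2}}^{-1})'(\alpha)\right)^{-2}\right)$.

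I expect the genuine obstacle to be the joint smallness of $c_{3}$ and $c_{4}$ needed to preserve the lower bound, as opposed to merely the upper bound. The upper bound is routine, but the two-sided $\asymp 1/\sqrt{n}$ requires that neither the nonstationary correction (tamed by the $i_{0}$-truncation) nor the variance discrepancy (tamed by the tail estimate) erode the lower bound arising from $L(S^{\alpha})>0$; concretely, the quantitative crux is verifying that the second-kind correlations decay fast enough that the variance tail is $o(1/\sqrt{n})$ rather than merely $\mathcal{O}(1/\sqrt{n})$, so that $c_{4}$ can indeed be absorbed.
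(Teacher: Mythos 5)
Your proposal is correct and follows essentially the same route as the paper, whose proof is a one-line appeal to Theorem \ref{OptimalTheorem} together with the estimates $\kappa_{4}(U_{f_{2},n}(S^{\alpha}))=\mathcal{O}(1/n)$, $|E((F_{f_{2},n}(S^{\alpha}))^{3})|=\mathcal{O}(1/\sqrt{n})$, and Theorem \ref{Berryesseentheta} via the monotonicity/convexity of $\nu_{f_{2}}$. You in fact supply more detail than the paper does here, correctly importing the $i_{0}$-truncation from Section \ref{OBEQ} to make $c_{3}$ small and noting that the exponential decay in (\ref{inner product of S_alpha}) makes the variance tail (and hence $c_{4}$) negligible.
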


\section{Appendix\label{Appendix}}

The following result is a well-known direct consequence of the
Borel-Cantelli Lemma (see e.g. \cite{KN}).

\begin{lemma}
\label{Borel-Cantelli} Let $\gamma >0$ and $p_{0}\in \mathbb{N}$. Moreover
let $(Z_{n})_{n\in \mathbb{N}}$ be a sequence of random variables. If for
every $p\geq p_{0}$ there exists a constant $c_{p}>0$ such that for all $%
n\in \mathbb{N}$,
\begin{equation*}
\|Z_{n}\|_{L^p(\Omega)} \leqslant c_{p}\cdot n^{-\gamma },
\end{equation*}%
then for all $\varepsilon >0$ there exists a random variable $\eta
_{\varepsilon }$ such that
\begin{equation*}
|Z_{n}|\leqslant \eta _{\varepsilon }\cdot n^{-\gamma +\varepsilon }\quad %
\mbox{almost surely}
\end{equation*}%
for all $n\in \mathbb{N}$. Moreover, $\mathbb{E}|\eta _{\varepsilon
}|^{p}<\infty $ for all $p\geq 1$.
\end{lemma}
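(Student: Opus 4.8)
The plan is to exhibit an explicit dominating variable and then control its moments directly; a single finite moment will force almost-sure finiteness (this is where the Borel--Cantelli mechanism enters), and the full moment control will follow by interpolation. Set
\begin{equation*}
\eta_{\varepsilon} := \sup_{n \geq 1} |Z_n|\, n^{\gamma - \varepsilon},
\end{equation*}
which is measurable as a countable supremum. By construction, on the event $\{\eta_{\varepsilon} < \infty\}$ one has $|Z_n| \leq \eta_{\varepsilon}\, n^{-\gamma+\varepsilon}$ for \emph{every} $n$ simultaneously, so the entire statement reduces to showing that $\eta_{\varepsilon}$ possesses finite moments of all orders $p \geq 1$.

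First I would establish integrability at one conveniently large exponent. Fix $q \geq p_{0}$ large enough that $\varepsilon q > 1$, which is possible since $\varepsilon>0$ is fixed. Using $\eta_{\varepsilon}^{q} = \sup_{n} |Z_n|^{q}\, n^{(\gamma-\varepsilon)q} \leq \sum_{n\geq 1} |Z_n|^{q}\, n^{(\gamma-\varepsilon)q}$, taking expectations (Tonelli), and invoking the hypothesis $\|Z_n\|_{L^{q}(\Omega)} \leq c_{q}\, n^{-\gamma}$,
\begin{equation*}
E\!\left[\eta_{\varepsilon}^{q}\right] \leq \sum_{n\geq 1} E\!\left[|Z_n|^{q}\right] n^{(\gamma-\varepsilon)q} \leq c_{q}^{q}\sum_{n\geq 1} n^{-\gamma q}\, n^{(\gamma-\varepsilon)q} = c_{q}^{q}\sum_{n\geq 1} n^{-\varepsilon q} < \infty,
\end{equation*}
the series converging precisely because $\varepsilon q > 1$. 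In particular $\eta_{\varepsilon} < \infty$ almost surely, which is the claimed almost-sure domination. Equivalently, one may phrase this last conclusion as the advertised Borel--Cantelli argument: by Markov's inequality $P\!\left(|Z_n|\, n^{\gamma-\varepsilon} > 1\right) \leq c_{q}^{q}\, n^{-\varepsilon q}$ is summable, so only finitely many of these events occur, forcing $\eta_{\varepsilon}$ to be finite.

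To upgrade to all orders $p \geq 1$, I would interpolate downward: for $1 \leq p < q$, Jensen's inequality (the monotonicity of $L^{p}$ norms on a probability space) gives
\begin{equation*}
E\!\left[\eta_{\varepsilon}^{p}\right] \leq \left(E\!\left[\eta_{\varepsilon}^{q}\right]\right)^{p/q} < \infty,
\end{equation*}
while for $p \geq q$ one repeats the summation bound with $q$ replaced by $p$ (legitimate since $p \geq q \geq p_{0}$ and $\varepsilon p \geq \varepsilon q > 1$). This completes the argument.

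The one delicate point, which I would flag as the main obstacle, is the mismatch between hypothesis and conclusion: the assumption only controls $L^{p}$ norms for $p \geq p_{0}$, yet the conclusion demands moments of $\eta_{\varepsilon}$ of \emph{every} order $p \geq 1$. Attempting to bound $E[\eta_{\varepsilon}^{p}]$ directly by summing $E[|Z_n|^{p}]\, n^{(\gamma-\varepsilon)p}$ fails for small $p$, since the resulting series $\sum_{n} n^{-\varepsilon p}$ diverges whenever $\varepsilon p \leq 1$. The two-stage device above circumvents this exactly: establish finiteness at a single large $q$ satisfying both $q \geq p_{0}$ and $\varepsilon q > 1$, then descend to all smaller orders by Jensen. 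No uniformity in $\varepsilon$ is required, as $\eta_{\varepsilon}$ is permitted to depend on $\varepsilon$.
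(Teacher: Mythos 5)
Your proof is correct and is exactly the standard argument: the paper itself gives no proof of this lemma, deferring to the reference [KN], whose proof proceeds precisely as you do --- define $\eta_{\varepsilon}$ as the weighted supremum, bound a single sufficiently high moment by the summed series with $\varepsilon q>1$, and descend to lower orders by Jensen. Nothing to add.
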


\noindent

\begin{proof}[Proof of Theorem \protect\ref{CLT}]
Since $\frac{U_{f_{q},n}(Z)}{\sqrt{E\left[ U_{f_{q},n}^{2}(Z)\right] }}\in
\mathbb{D}^{1,2}$, by \cite[Proposition 2.4]{NP2013} we have
\begin{equation*}
d_{TV}\left( \frac{U_{f_{q},n}(Z)}{\sqrt{E\left[ U_{f_{q},n}^{2}(Z)\right] }}%
,N\right) \leqslant 2E\left\vert 1-\left\langle D\frac{U_{f_{q},n}(Z)}{\sqrt{%
E\left[ U_{f_{q},n}^{2}(Z)\right] }},-DL^{-1}\frac{U_{f_{q},n}(Z)}{\sqrt{E%
\left[ U_{f_{q},n}^{2}(Z)\right] }}\right\rangle _{\mathcal{H}}\right\vert .
\end{equation*}%
On the other hand, exploiting the fact that
\begin{equation*}
E\left[ E\left[ \left( I_{2k}(g_{2k,n})\right) ^{2}\right] -\langle
DI_{2k}(g_{2k,n}),-DL^{-1}I_{2k}(g_{2k,n})\rangle _{\mathcal{H}}\right] =0
\end{equation*}%
we obtain
\begin{eqnarray*}
&&E\left\vert 1-\langle D\frac{U_{f_{q},n}(Z)}{\sqrt{E\left[
U_{f_{q},n}^{2}(Z)\right] }},-DL^{-1}\frac{U_{f_{q},n}(Z)}{\sqrt{E\left[
U_{f_{q},n}^{2}(Z)\right] }}\rangle _{\mathcal{H}}\right\vert \\
&\leqslant &\frac{1}{E\left[ U_{f_{q},n}^{2}(Z)\right] }\left(
\sum_{k=1}^{q/2}\sqrt{Var\left( (2k)^{-1}\Vert DI_{2k}(g_{2k,n})\Vert _{%
\mathcal{\mathcal{H}}}^{2}\right) }+\sum_{1\leqslant k\neq l\leqslant
q/2}(2l)^{-1}E\left\vert \langle DI_{2k}(g_{2k,n}),DI_{2l}(g_{2l,n})\rangle
_{\mathcal{H}}\right\vert \right) .
\end{eqnarray*}%
Moreover, by \cite[Lemma 3.1]{NPP} we have
\begin{equation*}
Var\left( (2k)^{-1}\Vert DI_{2k}(g_{2k,n})\Vert _{\mathcal{\mathcal{H}}%
}^{2}\right) =(2k)^{-2}\sum_{j=1}^{2k-1}j^{2}j!^{2}\left( _{j}^{2k}\right)
^{4}(4k-2j)!\Vert g_{2k,n}\underset{j}{\tilde{\otimes}}g_{2k,n}\Vert _{{%
\mathcal{\mathcal{H}}}^{\otimes 4k-2j}}^{2},
\end{equation*}%
and for $k<l$
\begin{eqnarray*}
&&E\left[ \left( (2l)^{-1}\langle DI_{2k}(g_{2k,n}),DI_{2l}(f_{2l,n})\rangle
_{\mathcal{H}}\right) ^{2}\right] \leqslant (2k)!\left(
_{2k-1}^{2l-1}\right) ^{2}(2l-2k)!E\left[ \left( I_{2k}(g_{2k,n})\right) ^{2}%
\right] \Vert g_{2k,n}\underset{2l-2k}{{\otimes }}g_{2k,n}\Vert _{{\mathcal{%
\mathcal{H}}}^{\otimes 4k}} \\
&&+2k^{2}\sum_{j=1}^{2k-1}(l-1)!^{2}\left( _{j-1}^{2k-1}\right) ^{2}\left(
_{j-1}^{2l-1}\right) ^{2}(2k+2l-2j)!\left( \Vert g_{2k,n}\underset{2k-j}{{%
\otimes }}g_{2k,n}\Vert _{{\mathcal{\mathcal{H}}}^{\otimes 2j}}^{2}+\Vert
g_{2k,n}\underset{2l-j}{{\otimes }}g_{2k,n}\Vert _{{\mathcal{\mathcal{H}}}%
^{\otimes 2j}}^{2}\right) .
\end{eqnarray*}%
Combining this together with
\begin{eqnarray*}
E\left[ \left( I_{2k}(g_{2k,n})\right) ^{2}\right] &=&\frac{%
(2k)!d_{f_{q},2k}^{2}}{n}\sum_{i,j=0}^{n-1}\left( \frac{r_{Z}(i-j)}{r_{Z}(0)}%
\right) ^{2k} \\
&\leqslant &\frac{(2k)!d_{f_{q},2k}^{2}}{n}\sum_{i,j=0}^{n-1}\left( \frac{%
r_{Z}(i-j)}{r_{Z}(0)}\right) ^{2}=\frac{(2k)!d_{f_{q},2k}^{2}}{2r_{Z}^{2}(0)}%
E(U_{f_{2},n}^{2}(Z)).
\end{eqnarray*}%
and the fact that for every $1\leqslant s\leqslant 2k-1$ with $k\in
\{1,\ldots ,q/2\}$
\begin{eqnarray*}
&&\Vert g_{2k,n}\underset{s}{\otimes }g_{2k,n}\Vert _{{\mathcal{\mathcal{H}}}%
^{\otimes 4k-2s}}^{2} \\
&\leqslant
&d_{f_{q},2k}^{4}(Z)n^{-2}\sum_{k_{1},k_{2},k_{3},k_{4}=1}^{n}\left( \frac{%
r_{Z}(k_{1}-k_{2})}{{r_{Z}(0)}}\right) ^{s}\left( \frac{r_{Z}(k_{3}-k_{4})}{{%
r_{Z}(0)}}\right) ^{s}\left( \frac{r_{Z}(k_{1}-k_{3})}{{r_{Z}(0)}}\right)
^{2k-s}\left( \frac{r_{Z}(k_{2}-k_{4})}{{r_{Z}(0)}}\right) ^{2k-s} \\
&\leqslant
&d_{f_{q},2k}^{4}(Z)n^{-2}r_{Z}(0)^{-4}%
\sum_{k_{1},k_{2},k_{3},k_{4}=1}^{n}r_{Z}(k_{1}-k_{2})r_{Z}(k_{3}-k_{4})r_{Z}(k_{1}-k_{3})r_{Z}(k_{2}-k_{4})
\\
&=&d_{f_{q},2k}^{4}(Z)\kappa _{4}(U_{2,n}(Z))
\end{eqnarray*}%
we deduce that
\begin{equation*}
\sum_{k=1}^{q/2}\sqrt{Var\left( (2k)^{-1}\Vert DI_{2k}(g_{2k,n})\Vert _{%
\mathcal{\mathcal{H}}}^{2}\right) }\leqslant C_{1,q}(Z)\sqrt{\kappa
_{4}(U_{2,n}(Y))}
\end{equation*}%
and
\begin{eqnarray*}
&&\sum_{1\leqslant k\neq l\leqslant q/2}(2l)^{-1}E\left\vert \langle
DI_{2k}(g_{2k,n}),DI_{2l}(f_{2l,n})\rangle _{\mathcal{H}}\right\vert \\
&=&\sum_{1\leqslant k<l\leqslant q/2}(1+\frac{k}{l})(2l)^{-1}E\left\vert
\langle DI_{2k}(g_{2k,n}),DI_{2l}(f_{2l,n})\rangle _{\mathcal{H}}\right\vert
\\
&\leqslant &\sum_{1\leqslant k<l\leqslant q/2}(1+\frac{k}{l})\left( E\left[
\left( (2l)^{-1}\langle DI_{2k}(g_{2k,n}),DI_{2l}(f_{2l,n})\rangle _{%
\mathcal{H}}\right) ^{2}\right] \right) ^{1/2} \\
&\leqslant &C_{2,q}(Z)\sqrt{E\left[ U_{f_{2},n}^{2}(Z)\right] \sqrt{\kappa
_{4}(U_{f_{2},n}(Z))}+\kappa _{4}(U_{f_{2},n}(Z))},
\end{eqnarray*}%
where
\begin{equation}
C_{1,q}(Z)=\sum_{k=1}^{q/2}d_{f_{q},2k}^{2}(Z)(2k)^{-1}\sqrt{%
\sum_{j=1}^{2k-1}j^{2}j!^{2}\left( _{j}^{2k}\right) ^{4}(4k-2j)!},
\label{C1q}
\end{equation}%
and
\begin{eqnarray}
C_{2,q}(Z) &=&\sum_{1\leqslant k<l\leqslant q/2}(1+\frac{k}{l})\left( \max
\left( ((2k)!)^{2}\left( _{2k-1}^{2l-1}\right) ^{2}(2l-2k)!\frac{%
d_{f_{q},2k}^{4}}{2r_{Z}^{2}(0)},\right. \right.  \notag \\
&&\left. \left.
2k^{2}(d_{f_{q},2k}^{4}+d_{f_{q},2l}^{4})\sum_{j=1}^{2k-1}(l-1)!^{2}\left(
_{j-1}^{2k-1}\right) ^{2}\left( _{j-1}^{2l-1}\right) ^{2}(2k+2l-2j)!\right)
\right) ^{1/2}.  \label{C2q}
\end{eqnarray}%
Furthermore,
\begin{eqnarray*}
d_{TV}\left( \frac{U_{f_{q},n}(Z)}{\sqrt{E\left[ U_{f_{q},n}^{2}(Z)\right] }}%
,N\right) &\leqslant &\frac{C_{q}(Z)}{E\left[ U_{f_{q},n}^{2}(Z)\right] }%
\sqrt{E\left[ U_{f_{2},n}^{2}(Z)\right] \sqrt{\kappa _{4}(U_{f_{2},n}(Z))}%
+\kappa _{4}(U_{f_{2},n}(Z))} \\
&\leqslant &\frac{C_{q}(Z)}{E\left[ U_{f_{2},n}^{2}(Z)\right] }\sqrt{E\left[
U_{f_{2},n}^{2}(Z)\right] \sqrt{\kappa _{4}(U_{f_{2},n}(Z))}+\kappa
_{4}(U_{f_{2},n}(Z))} \\
&=&C_{q}(Z)\sqrt{\sqrt{\kappa _{4}(F_{f_{2},n}(Z))}+\kappa
_{4}(F_{f_{2},n}(Z))},
\end{eqnarray*}%
where
\begin{equation}
C_{q}(Z)=2\max \left( C_{1,q}(Z),C_{2,q}(Z)\right) .  \label{Cq}
\end{equation}%
Thus the first estimate of the theorem is obtained. The upper bound of the
second estimate is proved in \cite[Proposition 6.4]{BBNP}.
\end{proof}

\begin{proof}[Proof of Lemma \protect\ref{dWL1}]
By definition of the Wasserstein distance,%
\begin{eqnarray*}
d_{W}\left( Y+Z,N\right) &=&\sup_{h}\left\vert E\left[ h\left( Y+Z\right)
-h\left( Z\right) \right] +E\left[ h\left( Z\right) \right] -E\left[ h(N)%
\right] \right\vert \\
&\leqslant &\sup_{h}\left\vert E\left[ h\left( Y+Z\right) -h\left( Z\right) %
\right] \right\vert +\sup_{h}\left\vert E\left[ h\left( Z\right) \right] -E%
\left[ h(N)\right] \right\vert \\
&\leqslant &E\left[ \left\vert Y\right\vert \right] +d_{W}\left( Z,N\right)
\end{eqnarray*}%
where in the last inequality we used the fact that $h$ is $1$-Lipshitz.
\end{proof}

\begin{proof}[Proof of Lemma \protect\ref{LBdWlemma}]
An inspection of the proof of the main lower bound result in \cite{NP2013}
shows that their lower bound on $d_{TV}\left( F_{n},N\right) $ is in fact a
lower bound on
\begin{equation*}
\frac{1}{2}\max \left\{ \left\vert E\left( \cos F_{n}\right) -E\left( \cos
N\right) \right\vert ;\left\vert E\left( \sin F_{n}\right) -E\left( \sin
N\right) \right\vert \right\} .
\end{equation*}%
Since $\sin $ and $\cos $ are $1$-Lipshitz functions, by definition of $d_{W}
$, this expression is also a lower bound on $\frac{1}{2}d_{W}\left(
F_{n},N\right) $. This proves the lemma.
\end{proof}

\begin{lemma}
\label{inner product for OUFOU}Let $H\in (0,\frac{1}{2})\cup (\frac{1}{2},1]$%
, $m,m^{\prime }>0$ and $-\infty \leqslant a<b\leqslant c<d<\infty $. Then
\begin{equation*}
E\left( \int_{a}^{b}e^{ms}dB^{H}(s)\int_{c}^{d}e^{m^{\prime
}t}dB^{H}(t)\right) =H(2H-1)\int_{a}^{b}dse^{ms}\int_{c}^{d}dte^{m^{\prime
}t}(t-s)^{2H-2}
\end{equation*}
\end{lemma}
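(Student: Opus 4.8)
The plan is to reduce the computation to increments of $B^{H}$ and to recognize the claimed kernel as the second mixed derivative of the fractional Brownian covariance. Write $R_{H}(s,t):=E\left[B^{H}_{s}B^{H}_{t}\right]=\tfrac12\left(|s|^{2H}+|t|^{2H}-|t-s|^{2H}\right)$. First I would invoke bilinearity of the covariance together with the $L^{2}(\Omega)$-continuity of the Wiener-integral map, so that it suffices to prove the identity for indicator integrands $f=\mathbf{1}_{[u,v]}$ and $g=\mathbf{1}_{[u',v']}$ with $[u,v]\subset[a,b]$ and $[u',v']\subset[c,d]$. Since $b\leqslant c$, these subintervals are ordered as $u\leqslant v\leqslant u'\leqslant v'$, so their supports are disjoint up to at most a shared endpoint, and $\int f\,dB^{H}=B^{H}_{v}-B^{H}_{u}$, $\int g\,dB^{H}=B^{H}_{v'}-B^{H}_{u'}$.

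Next I would compute $E[(B^{H}_{v}-B^{H}_{u})(B^{H}_{v'}-B^{H}_{u'})]$ by expanding into $R_{H}(v,v')-R_{H}(u,v')-R_{H}(v,u')+R_{H}(u,u')$. Because every pair is ordered (each $s\in\{u,v\}$ lies to the left of each $t\in\{u',v'\}$), each marginal power $|u|^{2H},|v|^{2H},|u'|^{2H},|v'|^{2H}$ appears twice with opposite signs and cancels, leaving exactly $-\tfrac12\left[(v'-v)^{2H}-(v'-u)^{2H}-(u'-v)^{2H}+(u'-u)^{2H}\right]$. The key elementary observation is that, for $s<t$, one has $\partial^{2}_{s,t}\left[-\tfrac12(t-s)^{2H}\right]=H(2H-1)(t-s)^{2H-2}$; integrating this twice via the fundamental theorem of calculus over $s\in[u,v]$ and $t\in[u',v']$ reproduces precisely the four-term expression, so the indicator case equals $H(2H-1)\int_{u}^{v}\int_{u'}^{v'}(t-s)^{2H-2}\,dt\,ds$, which is the claimed formula for these integrands.

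Finally I would sum over a simple-function approximation of $e^{ms}\mathbf{1}_{[a,b]}$ and $e^{m't}\mathbf{1}_{[c,d]}$ and pass to the limit on both sides, the left-hand side by the fractional $L^{2}$-isometry and the right-hand side by dominated convergence, once the integrability of $(s,t)\mapsto e^{ms}e^{m't}(t-s)^{2H-2}$ over $[a,b]\times[c,d]$ is known. Establishing this integrability, and thereby the validity of the fundamental-theorem step when the intervals touch, is where the care is concentrated and is the main obstacle. Two points must be checked. When $b=c$ the kernel is singular only at the single corner $s=t=b$; writing $\xi=b-s\geqslant0$ and $\eta=t-b\geqslant0$ turns the local behaviour into $(\xi+\eta)^{2H-2}$, whose double integral reduces after one integration to integrals of $\xi^{2H-1}$ and $\eta^{2H-1}$, finite because $2H-1>-1$, so the identification survives even for $H<1/2$, where the kernel is not locally integrable across the full diagonal. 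When $a=-\infty$, the factor $e^{ms}$ with $m>0$ together with $(t-s)^{2H-2}\to0$ as $s\to-\infty$ secures absolute convergence of the $s$-integral at $-\infty$. With these verifications the limit passage is justified and the stated formula follows; the value $H=1/2$ is excluded precisely because there $H(2H-1)=0$ and disjoint increments of standard Brownian motion are independent.
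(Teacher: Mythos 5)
Your argument is correct, and it is essentially the standard computation that the paper's ``proof'' consists of: the paper merely writes ``we use the same argument as in the proof of \cite[Lemma 2.1]{CKM}'' and gives no details, so your write-up is a faithful expansion of exactly that argument (increments of $B^{H}$, the covariance $R_{H}$, and the identification of $H(2H-1)(t-s)^{2H-2}$ as $\partial^{2}_{s,t}\bigl[-\tfrac12(t-s)^{2H}\bigr]$ off the diagonal). Your treatment of the two delicate points --- integrability of $(\xi+\eta)^{2H-2}$ at the touching corner when $b=c$ and $H<\tfrac12$, and absolute convergence at $a=-\infty$ via the factor $e^{ms}$ --- is right. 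The one place I would ask for slightly more care is the final limit passage for $H<\tfrac12$: there the relevant Hilbert space $\mathcal{H}$ of integrands is not an $L^{2}(ds)$ space, so ``convergence of the simple-function approximations'' must be understood in the $\mathcal{H}$-norm, not pointwise or in $L^{2}(ds)$. This is harmless here (one can either check $\mathcal{H}$-convergence of Riemann-type approximations of the smooth integrand $e^{ms}\mathbf{1}_{[a,b]}$, or sidestep the issue entirely by defining $\int_{a}^{b}e^{ms}dB^{H}_{s}$ pathwise by integration by parts, as in \cite{CKM}, and then computing the covariance by Fubini; when $b=c$ one can also first separate the intervals by $\varepsilon$ and let $\varepsilon\downarrow 0$ using continuity of both sides), but as stated the phrase ``by the fractional $L^{2}$-isometry'' glosses over which norm controls the left-hand side.
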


\begin{proof}
We use the same argument as in the proof of \cite[Lemma 2.1]{CKM}.
\end{proof}

\begin{lemma}
\label{hypotheses FOU} Let $H\in (0,\frac{1}{2})\cup (\frac{1}{2},1)$, $%
m,m^{\prime }>0$ and let $Z^{\theta }$ be the process defined in (\ref%
{Ztheta}). Then,
\begin{equation*}
r_{Z^{\theta }}(0)=H\Gamma (2H)\theta ^{-2H}
\end{equation*}%
and for large $|t|$
\begin{equation*}
r_{Z^{\theta }}(t)\sim \frac{H(2H-1)}{\theta ^{2}}|t|^{2H-2}.
\end{equation*}
\end{lemma}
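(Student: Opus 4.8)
The plan is to compute $r_{Z^\theta}(t)=E[Z_0^\theta Z_t^\theta]$ directly from the moving-average representation (\ref{Ztheta}), using the fractional-Brownian covariance formula of Lemma \ref{inner product for OUFOU} wherever the relevant time intervals are disjoint, and handling the coincident-interval (variance) computation separately. By stationarity it suffices to treat $t\ge 0$, with $Z_0^\theta=\int_{-\infty}^0 e^{\theta s}\,dB^H_s$ and $Z_t^\theta=\int_{-\infty}^t e^{-\theta(t-u)}\,dB^H_u$.

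First the variance. For $H>1/2$ the fBm covariance has the density $H(2H-1)|s-u|^{2H-2}$, so that, after reflecting $s,u\mapsto -s,-u$,
\begin{equation*}
r_{Z^\theta}(0)=H(2H-1)\int_0^\infty\!\!\int_0^\infty e^{-\theta(s+u)}|s-u|^{2H-2}\,ds\,du.
\end{equation*}
I would evaluate this with the change of variables $x=s+u$, $y=s-u$ (Jacobian $1/2$), whose inner integral $\int_{-x}^x|y|^{2H-2}\,dy=2x^{2H-1}/(2H-1)$ cancels the prefactor and leaves $H\int_0^\infty e^{-\theta x}x^{2H-1}\,dx=H\Gamma(2H)\theta^{-2H}$, as claimed.

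For the tail I would split $Z_t^\theta=\int_{-\infty}^0+\int_0^t$. The $(-\infty,0]$ piece contributes, after factoring $e^{-\theta t}$, exactly $e^{-\theta t}r_{Z^\theta}(0)$, which is exponentially negligible. The $[0,t]$ piece pairs with $Z_0^\theta$ over the \emph{disjoint} intervals $(-\infty,0]$ and $[0,t]$, so Lemma \ref{inner product for OUFOU} applies verbatim and gives
\begin{equation*}
H(2H-1)\,e^{-\theta t}\int_{-\infty}^0\!\!\int_0^t e^{\theta s}e^{\theta u}(u-s)^{2H-2}\,du\,ds.
\end{equation*}
Here $u-s\ge 0$ and the kernel is integrable against $e^{\theta s}e^{\theta u}$ even when $H<1/2$ (the corner singularity at $s=u=0$ is integrable because $2H>0$). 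Substituting $u=t-w$, factoring out $t^{2H-2}$, and applying dominated convergence as $t\to\infty$ sends the remaining double integral to $\theta^{-1}\cdot\theta^{-1}=\theta^{-2}$, yielding $r_{Z^\theta}(t)\sim \frac{H(2H-1)}{\theta^2}t^{2H-2}$. Note that this argument establishes the tail for every $H\in(0,1/2)\cup(1/2,1)$.

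The main obstacle is the variance when $H<1/2$: there the fBm covariance density $H(2H-1)|s-u|^{2H-2}$ is no longer locally integrable across the diagonal, the double-integral computation above is unavailable, and Lemma \ref{inner product for OUFOU} does not cover coincident intervals. I would resolve this via the spectral representation: $Z^\theta$ is stationary with spectral density $f_{Z^\theta}(\xi)=C_H|\xi|^{1-2H}/(\theta^2+\xi^2)$, where $C_H=\Gamma(2H+1)\sin(\pi H)/(2\pi)$ is the fractional-noise constant, so that $r_{Z^\theta}(0)=\int_{\mathbb{R}}f_{Z^\theta}(\xi)\,d\xi=2C_H\theta^{-2H}\int_0^\infty \frac{x^{1-2H}}{1+x^2}\,dx$. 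The Mellin integral $\int_0^\infty x^{1-2H}(1+x^2)^{-1}\,dx=\frac{\pi}{2\sin(\pi H)}$ (valid for $0<H<1$) then recovers $H\Gamma(2H)\theta^{-2H}$ uniformly in $H$; a quick sanity check at $H=1/2$ returns the classical value $1/(2\theta)$. Alternatively one may simply invoke the closed-form variance of the stationary fractional Ornstein--Uhlenbeck process established in \cite{CKM}. Either route also re-derives the $H>1/2$ variance, so the spectral computation can replace the explicit double integral above if a single uniform argument is preferred.
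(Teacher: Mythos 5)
Your proof is correct, but it takes a genuinely different route from the paper's. The paper disposes of this lemma by citation: it refers to \cite[Theorem 2.3]{CKM} or to its own Lemma \ref{hypotheses OUFOU}, whose proof writes the covariance via \cite[Proposition A.1]{CKM} as $mm'\int_{-\infty}^0\int_{-\infty}^0 e^{mu}e^{m'v}E(B_u^HB_v^H)\,du\,dv$ with the explicit fBm covariance $\tfrac12(|u|^{2H}+|v|^{2H}-|u-v|^{2H})$ --- a single integration-by-parts representation valid uniformly for all $H\in(0,1)$, which sidesteps entirely the non-integrability of the density $H(2H-1)|s-u|^{2H-2}$ across the diagonal when $H<\tfrac12$. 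You instead treat the variance by the density formula plus the $(x,y)=(s+u,s-u)$ substitution for $H>\tfrac12$, and correctly identify that this breaks down for $H<\tfrac12$, patching it with the spectral density $C_H|\xi|^{1-2H}/(\theta^2+\xi^2)$ and the Mellin integral; this is valid and in fact uniform in $H$, though the spectral density is itself a quoted fact of the same provenance as the paper's citation, so neither route is more self-contained than the other. For the tail, your decomposition of $Z_t^\theta$ at time $0$ is arguably cleaner than the paper's split at $\varepsilon t$ (in Lemma \ref{hypotheses OUFOU}): the $(-\infty,0]$ piece is exactly $e^{-\theta t}r_{Z^\theta}(0)$ rather than merely $O(e^{-\theta t})$, and Lemma \ref{inner product for OUFOU} applies to the disjoint pair $(-\infty,0]$, $[0,t]$ with only an integrable corner singularity. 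The one point you gloss over is the dominated convergence after factoring out $t^{2H-2}$: for $H>\tfrac12$ the factor $\bigl(1-(w+s)/t\bigr)^{2H-2}$ is not uniformly dominated near $w=t$, so you still need to split off the region $w>\varepsilon t$ and use the exponential decay of $e^{-\theta w}$ there --- exactly the device the paper's proof of Lemma \ref{hypotheses OUFOU} employs. That is a routine repair, not a gap in the argument.
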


\begin{proof}
see \cite[Theorem 2.3]{CKM} or Lemma \ref{hypotheses OUFOU}.
\end{proof}

\begin{lemma}
\label{hypotheses OUFOU} Let $H\in (0,\frac{1}{2})\cup (\frac{1}{2},1)$, $%
m,m^{\prime }>0$ and let $Z^{m}$ be the process defined in (\ref{expression
of Z_m}). Then,
\begin{equation}
E\left[ Z_{0}^{m}Z_{0}^{m^{\prime }}\right] =\frac{H\Gamma (2H)}{m+m^{\prime
}}\left( m^{1-2H}+{(m^{\prime })}^{1-2H}\right)  \label{first moment of Z_0}
\end{equation}%
and for large $|t|$
\begin{equation}
E\left[ Z_{0}^{m}Z_{t}^{m^{\prime }}\right] \sim \frac{H(2H-1)}{mm^{\prime }}%
|t|^{2H-2}.  \label{inner product of Z}
\end{equation}
\end{lemma}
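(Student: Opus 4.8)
The plan is to reduce both assertions to a single explicit double integral against the second-order structure of fractional Brownian motion. Writing $Z_0^m=\int_{-\infty}^0 e^{ms}\,dB_s^H$ and $Z_t^{m'}=e^{-m't}\int_{-\infty}^t e^{m'u}\,dB_u^H$, I would first dispose of the range $H\in(1/2,1)$, where the covariation of Wiener integrals against $B^H$ has the absolutely convergent kernel representation $E[\int f\,dB^H\int g\,dB^H]=H(2H-1)\int\!\int f(s)g(u)\,|s-u|^{2H-2}\,ds\,du$. This is exactly the computation underlying Lemma~\ref{inner product for OUFOU}, now applied to overlapping ranges of integration, which is legitimate precisely because $2H-2>-1$ makes the diagonal integrable.

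For the exact identity \eqref{first moment of Z_0} I would substitute $s\mapsto-s$, $u\mapsto-u$ to obtain $H(2H-1)\int_0^\infty\!\int_0^\infty e^{-mx}e^{-m'y}|x-y|^{2H-2}\,dx\,dy$, split according to the sign of $x-y$, and in the region $x>y$ set $w=x-y$. After swapping the order of integration this piece collapses to $\tfrac{1}{m+m'}\int_0^\infty e^{-mw}w^{2H-2}\,dw=\tfrac{\Gamma(2H-1)}{m+m'}\,m^{1-2H}$, and the region $y>x$ contributes its $m\leftrightarrow m'$ analogue. Summing and invoking $(2H-1)\Gamma(2H-1)=\Gamma(2H)$ yields exactly $\tfrac{H\Gamma(2H)}{m+m'}(m^{1-2H}+(m')^{1-2H})$. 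For the lag asymptotic \eqref{inner product of Z} I would insert the same kernel and substitute $u=t-v$, so that the prefactor $e^{-m't}$ cancels and the integral becomes $H(2H-1)\int_{-\infty}^0\!\int_0^\infty e^{ms}e^{-m'v}\,|s-t+v|^{2H-2}\,ds\,dv$. The exponential weights concentrate $s$ near $0$ and $v$ near $0$ on scale $O(1)$, so for $t\to+\infty$ one has $|s-t+v|=t-s+v\sim t$; dominated convergence then gives the leading term $H(2H-1)\,t^{2H-2}\bigl(\int_{-\infty}^0 e^{ms}\,ds\bigr)\bigl(\int_0^\infty e^{-m'v}\,dv\bigr)=\tfrac{H(2H-1)}{mm'}\,t^{2H-2}$, with $t\to-\infty$ handled symmetrically.

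The main obstacle is the range $H\in(0,1/2)$, where $2H-2<-1$, the kernel above fails to be integrable, and the preceding manipulations are no longer directly justified. I would resolve this by analytic continuation in $H$. Using the harmonizable (spectral) representation of $B^H$, the stationary process $Z^m$ is the output of the filter with transfer function $(m+i\lambda)^{-1}$ driven by fractional Gaussian noise, so that $E[Z_0^m Z_t^{m'}]=C_H\int_{\mathbb{R}} e^{it\lambda}\,|\lambda|^{1-2H}\,[(m+i\lambda)(m'-i\lambda)]^{-1}\,d\lambda$ for an explicit positive constant $C_H$ that is itself analytic in $H$. This integral converges for every $H\in(0,1)$, since $|\lambda|^{1-2H}$ is integrable at the origin and the integrand is $O(|\lambda|^{-1-2H})$ at infinity, and it is manifestly analytic in $H$ on $(0,1)\setminus\{1/2\}$; the right-hand sides of \eqref{first moment of Z_0} and \eqref{inner product of Z} are likewise analytic there. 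Since the two sides coincide on $(1/2,1)$ by the computation above, they coincide on $(0,1/2)$ as well, completing the proof. Alternatively, one may run the integration-by-parts argument of \cite{CKM} directly on the fractional-Brownian covariance $\tfrac12(|s|^{2H}+|u|^{2H}-|s-u|^{2H})$, which sidesteps the non-integrable kernel entirely at the cost of somewhat heavier bookkeeping; I expect the analytic-continuation route to be the cleanest to write out.
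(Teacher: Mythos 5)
Your computation for $H\in(\tfrac12,1)$ is sound: the overlapping-range kernel formula, the split by the sign of $x-y$, and the Gamma-function identity $(2H-1)\Gamma(2H-1)=\Gamma(2H)$ all check out, and the Laplace-type localization for the lag asymptotic is correct there (modulo the sign typo $|s-t+v|=t-s-v$, and the need to handle the integrable singularity at $v=t-s$ by noting it sits under an exponentially small weight). The analytic-continuation patch for the \emph{identity} (\ref{first moment of Z_0}) is also salvageable, though as written it is not quite right: agreement on $(\tfrac12,1)$ propagates to $(0,\tfrac12)$ only if both sides are analytic on a \emph{connected} domain containing both intervals, and "$(0,1)\setminus\{\tfrac12\}$" is disconnected. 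You must (and can) observe that both the spectral integral and $\tfrac{H\Gamma(2H)}{m+m'}(m^{1-2H}+(m')^{1-2H})$ extend analytically across $H=\tfrac12$, or to a complex neighbourhood of $(0,1)$.

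The genuine gap is the asymptotic (\ref{inner product of Z}) for $H\in(0,\tfrac12)$. Analytic continuation in $H$ applies to identities between analytic functions; it says nothing about an asymptotic equivalence as $t\to\infty$ at fixed $H$, because there is no equation valid on $(\tfrac12,1)$ to continue --- only the statement that a certain remainder is $o(t^{2H-2})$, and smallness of a remainder does not transfer under continuation. To close this you would need either a genuinely uniform-in-$H$ error bound, or an Abelian argument extracting the $|t|^{2H-2}$ decay from the $|\lambda|^{1-2H}$ behaviour of the spectral density at the origin; neither is supplied. The paper avoids the issue entirely by a different decomposition: it writes $E(Z_0^mZ_t^{m'})=A+B$ where $A$ involves $\int_{-\infty}^0$ against $\int_{-\infty}^{\varepsilon t}$ and is shown to be $O(e^{-m't})$ via the fBm covariance, while $B$ involves the \emph{disjoint} intervals $(-\infty,0]$ and $[\varepsilon t,t]$, on which the kernel formula of Lemma \ref{inner product for OUFOU} is valid for \emph{all} $H\in(0,1)$ (the singularity of $(v-s)^{2H-2}$ only occurs at a corner and remains integrable); an integration by parts then yields $B=\tfrac{H(2H-1)}{mm'}t^{2H-2}+o(t^{2H-2})$. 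Likewise, for (\ref{first moment of Z_0}) the paper integrates directly against $\tfrac12(|u|^{2H}+|v|^{2H}-|u-v|^{2H})$, which is the alternative you mention in your last sentence and which works uniformly in $H$ without any continuation argument. You should adopt one of these routes for the $H<\tfrac12$ asymptotic.
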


This implies that for $H\in(0,\frac12)\cup(\frac12,1)$
\begin{equation}
\eta_X(\theta,\rho):=E\left[\left(Z_0^{\theta,\rho}\right)^2\right]=\frac{%
H\Gamma(2H)}{\rho^2-\theta^2}[\rho^{2-2H}-\theta^{2-2H}],
\label{2nd moment of OUFOU X}
\end{equation}
\begin{equation}
\eta_{\Sigma}(\theta,\rho):=E\left[\left(\Sigma_0^{\theta,\rho}\right)^2%
\right]=\frac{H\Gamma(2H)}{\rho^2-\theta^2}[\theta^{-2H}-\rho^{-2H}],
\label{2nd moment of OUFOU Sigma}
\end{equation}
and for every $t>0$
\begin{equation}
E\left[\left(Z_t^{\theta,\rho}\Sigma_t^{\theta,\rho}\right) \right] =E\left[%
\left(Z_0^{\theta,\rho}\Sigma_0^{\theta,\rho}\right) \right]=0.
\label{cov(Z,Sigma)}
\end{equation}

\begin{proof}
By using \cite[Proposition A.1]{CKM}, we can write
\begin{eqnarray*}
E\left[ Z_{0}^{m}Z_{0}^{m^{\prime }}\right] &=&mm^{\prime }\int_{-\infty
}^{0}\int_{-\infty }^{0}e^{mu}e^{m^{\prime }v}E\left(
B_{u}^{H}B_{v}^{H}\right) \ dudv \\
&=&\frac{mm^{\prime }}{2}\int_{0}^{\infty }\int_{0}^{\infty
}e^{mu}e^{m^{\prime }v}\left( u^{2H}+v^{2H}-|v-u|^{2H}\right) \ dudv \\
&=&\frac{\Gamma (2H+1)}{2(m+m^{\prime })}\left( m^{1-2H}+{(m^{\prime })}%
^{1-2H}\right) .
\end{eqnarray*}%
Thus the estimate (\ref{first moment of Z_0}) is proved. Now, let $%
0<\varepsilon <1$
\begin{eqnarray*}
E\left( Z_{0}^{m}Z_{t}^{m^{\prime }}\right) &=&e^{-m^{\prime }t}E\left(
\int_{-\infty }^{0}e^{mu}dB_{u}^{H}\int_{-\infty }^{t}e^{m^{\prime
}v}dB_{v}^{H}\right) \\
&=&e^{-m^{\prime }t}E\left( \int_{-\infty }^{0}e^{mu}dB_{u}^{H}\int_{-\infty
}^{\varepsilon t}e^{m^{\prime }v}dB_{v}^{H}\right) +e^{-m^{\prime }t}E\left(
\int_{-\infty }^{0}e^{mu}dB_{u}^{H}\int_{\varepsilon t}^{t}e^{m^{\prime
}v}dB_{v}^{H}\right) \\
:= &&A+B
\end{eqnarray*}%
where, using \cite[Proposition A.1]{CKM} it is easy to see that $|A|=O\left(
e^{-m^{\prime }t}\right) $. On the other hand, by Lemma \ref{inner product
for OUFOU} and integration by parts and linear changes of variables
\begin{eqnarray*}
B &=&H(2H-1)e^{-m^{\prime }t}\int_{-\infty }^{0}du\ e^{mu}\int_{\varepsilon
t}^{t}dv\ e^{m^{\prime }v}(v-u)^{2H-2} \\
&=&\frac{H(2H-1)}{m+m^{\prime }}\left( \int_{t}^{\infty
}e^{-m(z-t)}z^{2H-2}dz+\int_{\varepsilon t}^{t}e^{-m^{\prime
}(t-z)}z^{2H-2}dz+e^{-m^{\prime }t(1-\varepsilon )}\int_{\varepsilon
t}^{\infty }e^{-m(z-\varepsilon t)}z^{2H-2}dz\right) \\
&=&\frac{H(2H-1)}{(m+m^{\prime })}\left( \frac{t^{2H-2}}{m}+\frac{2H-2}{m}%
\int_{t}^{\infty }e^{-m(z-t)}z^{2H-3}dz+\frac{t^{2H-2}}{m^{\prime }}\right.
\\
&&\left. -\frac{(\varepsilon t)^{2H-2}}{m^{\prime }}e^{-m^{\prime
}(1-\varepsilon )t}-\frac{2H-2}{m^{\prime }}\int_{\varepsilon
t}^{t}e^{-m^{\prime }(t-z)}z^{2H-3}dz+e^{-m^{\prime }t(1-\varepsilon
)}\int_{\varepsilon t}^{\infty }e^{-m(z-\varepsilon t)}z^{2H-2}dz\right) \\
&=&\frac{H(2H-1)}{mm^{\prime }}t^{2H-2}+o\left( t^{2H-2}\right) ,
\end{eqnarray*}%
the last inequality coming from the fact that
\begin{equation*}
\int_{t}^{\infty }e^{-m(z-t)}z^{2H-3}dz\leqslant t^{-1}\int_{0}^{\infty
}e^{-my}dy\rightarrow 0,\quad \mbox{ as }t\rightarrow \infty ,
\end{equation*}%
\begin{eqnarray*}
t^{2-2H}\int_{\varepsilon t}^{t}e^{-m^{\prime }(t-z)}z^{2H-3}dz &\leqslant
&\varepsilon ^{2H-3}t^{-1}\int_{\varepsilon t}^{t}e^{-m^{\prime }(t-z)}dz \\
&=&\varepsilon ^{2H-3}t^{-1}\int_{0}^{(1-\varepsilon )t}e^{-m^{\prime
}y}dy\rightarrow 0,\quad \mbox{ as }t\rightarrow \infty ,
\end{eqnarray*}%
and
\begin{equation*}
t^{2-2H}e^{-m^{\prime }t(1-\varepsilon )}\rightarrow 0,\quad \mbox{ as }%
t\rightarrow \infty .
\end{equation*}%
So, we conclude that the estimate (\ref{inner product of Z}) is obtained.
\end{proof}

\begin{lemma}
\label{ppt FOUSK}Let $H\in (\frac{1}{2},1)$. Then,
\begin{equation}
E\left[ \left( S_{0}^{\alpha }\right) ^{2}\right] =\frac{(2H-1)H^{2H}}{%
\alpha }\beta (1-H+\alpha H,2H-1).  \label{2nd moment of FOUSK}
\end{equation}%
and for large $|t|$
\begin{equation}
r_{S^{\alpha }}(t)=E\left[ S_{0}^{\alpha }S_{t}^{\alpha }\right] =O\left(
e^{-\min \{\alpha ,\frac{1-H}{H}\}t}\right) .
\label{inner product of S_alpha}
\end{equation}
\end{lemma}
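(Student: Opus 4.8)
The plan is to work from the explicit Wiener-integral representation established just above the lemma, namely
\[
S_t^\alpha = H^{(1-\alpha)H}\,e^{-\alpha t}\int_0^{a_t} r^{(\alpha-1)H}\,d\tilde B_r,
\]
where $\tilde B$ is a fBm with the same Hurst index $H\in(\tfrac12,1)$ and $a_t=He^{t/H}$. Since $H>\tfrac12$, the covariance of two Wiener integrals against $\tilde B$ is given by the standard kernel (the same one that drives Lemma \ref{inner product for OUFOU}, and as in \cite{CKM}),
\[
E\left[\int_0^b f(u)\,d\tilde B_u \int_0^c g(v)\,d\tilde B_v\right]
= H(2H-1)\int_0^b\!\!\int_0^c f(u)g(v)\,|u-v|^{2H-2}\,du\,dv .
\]
Both claims then follow by inserting the representation into this formula and analyzing the resulting double integral.

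For the first identity I would set $t=0$, using $a_0=H$ and $e^{-\alpha\cdot 0}=1$, so that
\[
E\left[(S_0^\alpha)^2\right] = H^{2(1-\alpha)H}H(2H-1)\int_0^H\!\!\int_0^H (uv)^{(\alpha-1)H}|u-v|^{2H-2}\,du\,dv .
\]
By symmetry the double integral is twice its value on $\{u<v\}$, and the substitution $u=vw$ factors it as
\[
2\int_0^H v^{2(\alpha-1)H+2H-1}\,dv\cdot\int_0^1 w^{(\alpha-1)H}(1-w)^{2H-2}\,dw
= \frac{H^{2\alpha H}}{\alpha H}\,\beta(1-H+\alpha H,\,2H-1),
\]
since the exponent on $v$ collapses to $2\alpha H-1$ and the $w$-integral is exactly the Beta function $\beta(1-H+\alpha H,2H-1)$. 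Collecting the powers of $H$ (they combine to $H^{2H}$) and cancelling one factor of $H$ yields precisely \eqref{2nd moment of FOUSK}. The only thing to verify here is integrability of the $w$-integral at its endpoints, which holds because $(\alpha-1)H>-1$ (as $\alpha>0>1-1/H$) and $2H-2>-1$.

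The genuinely delicate part is the covariance decay \eqref{inner product of S_alpha}, where
\[
r_{S^\alpha}(t) = H^{2(1-\alpha)H}e^{-\alpha t}H(2H-1)\int_0^H\!\!\int_0^{a_t}(uv)^{(\alpha-1)H}|u-v|^{2H-2}\,du\,dv .
\]
The plan is to split the inner $v$-integral at a fixed threshold, say $v=2H$. On $(0,2H)$ the double integral is a finite constant (again by the integrability noted above), contributing a term of exact order $e^{-\alpha t}$. On $(2H,a_t)$ one has $v\geq 2u\geq 2$, so $|u-v|^{2H-2}\asymp v^{2H-2}$, and the $v$-integral behaves like $\int^{a_t}v^{(\alpha+1)H-2}\,dv$. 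The position of the exponent $(\alpha+1)H-2=\alpha H-(1-H)$ relative to $-1$ decides the outcome: if $\alpha<(1-H)/H$ this converges and the prefactor $e^{-\alpha t}$ dominates, whereas if $\alpha>(1-H)/H$ the integral grows like $a_t^{(\alpha+1)H-1}$. Turning this power of $a_t=He^{t/H}$ into the exponential $e^{(\alpha+1-1/H)t}$ and multiplying by $e^{-\alpha t}$ leaves $e^{-((1-H)/H)t}$. The main obstacle is exactly this competition between the decaying prefactor and the possibly growing far-field integral; it is what produces the two regimes and hence the rate $e^{-\min\{\alpha,(1-H)/H\}t}$, with the boundary case $\alpha=(1-H)/H$ contributing only a harmless logarithmic factor absorbed into the $O$-bound.
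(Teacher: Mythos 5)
Your computation of $E[(S_0^\alpha)^2]$ is, essentially line for line, the paper's own proof: both arguments insert the representation into the covariance kernel $H(2H-1)|u-v|^{2H-2}$ (valid since $H>\frac12$), symmetrize onto $\{u<v\}$, substitute $u=vw$ to factor out the Beta integral $\beta(1-H+\alpha H,2H-1)$, and collect the powers of $H$ into $H^{2H}$; your endpoint-integrability check is a harmless extra. Where the two routes genuinely differ is the covariance decay (\ref{inner product of S_alpha}): the paper does not prove it at all, writing only \textquotedblleft see \cite{KS}\textquotedblright, whereas you give a self-contained argument by splitting the $v$-integral at a fixed threshold and comparing the exponent $(\alpha+1)H-2$ to $-1$. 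This correctly captures the competition between the prefactor $e^{-\alpha t}$ and the far-field mass $\asymp a_t^{(\alpha+1)H-1}$, and it reproduces the rate $e^{-\min\{\alpha,\frac{1-H}{H}\}t}$ in both non-degenerate regimes, so your version buys self-containedness at the cost of a page of analysis the authors outsource. Two small points. First, at the boundary $\alpha=\frac{1-H}{H}$ your own estimate gives order $t\,e^{-\alpha t}$ (the kernel is positive for $H>\frac12$, so the factor $\log a_t\asymp t/H$ is really present), and $t\,e^{-\alpha t}$ is \emph{not} $O(e^{-\alpha t})$; you should either exclude that single parameter value or concede an arbitrarily small loss in the exponent — this is immaterial for every downstream use of the lemma (square-summability of $r_{S^\alpha}$ and the $\kappa_4$ and third-moment estimates all tolerate a polynomial factor), but the phrase \textquotedblleft absorbed into the $O$-bound\textquotedblright\ is not literally right. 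Second, the chain \textquotedblleft $v\geq 2u\geq 2$\textquotedblright\ is a slip (one only has $v\geq 2H\geq 2u$ with $2H<2$); what you actually need and have is $v-u\geq v/2$, so the conclusion $|u-v|^{2H-2}\asymp v^{2H-2}$ stands.
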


\begin{proof}
We prove the first point (\ref{2nd moment of FOUSK}). We have
\begin{eqnarray*}
E\left[ \left( S_{0}^{\alpha }\right) ^{2}\right] &=&H(2H-1)H^{2(1-\alpha
)H}\int_{0}^{a_{0}}dyy^{(\alpha -1)H}\int_{0}^{a_{0}}dx\ x^{(\alpha
-1)H}|x-y|^{2H-2} \\
&=&2H(2H-1)H^{2(1-\alpha )H}\int_{0}^{a_{0}}dyy^{(\alpha
-1)H}\int_{0}^{y}dx\ x^{(\alpha -1)H}(y-x)^{2H-2} \\
&=&2H(2H-1)H^{2(1-\alpha )H}\int_{0}^{a_{0}}dyy^{2\alpha H-1}\int_{0}^{1}dz\
z^{(\alpha -1)H}(1-z)^{2H-2} \\
&=&\frac{(2H-1)H^{2H}}{\alpha }\beta (1-H+\alpha H,2H-1).
\end{eqnarray*}%
Thus (\ref{2nd moment of FOUSK}) is obtained. For the point (\ref{inner
product of S_alpha}) see \cite{KS}.
\end{proof}

\bigskip

\end{document}